\title{On the BCI problem}
\author{Ted Dobson \\
IAM and FAMNIT, University of Primorska\\
Muzejska trg 2\\
Koper 6000, Slovenia\\
\\
and\\
\\
Gregory Robson\\
FAMNIT, University of Primorska\\
Muzejska trg 2\\
Koper 6000, Slovenia\\
}
\tikzstyle{V}=[draw, fill =black, circle, inner sep=0pt, minimum size=4pt]
\theoremstyle{plain}
\numberwithin{equation}{section}
\newtheorem{theorem}{Theorem}[section]
\newtheorem{corollary}[theorem]{Corollary}
\newtheorem{lemma}[theorem]{Lemma}
\theoremstyle{definition}
\newtheorem*{solution*}{Solution}
\newtheorem{definition}[theorem]{Definition}
\newtheorem{remark}[theorem]{Remark}
\newtheorem{example}[theorem]{Example}
\newtheorem{problem}[theorem]{Problem}
\DeclareMathOperator{\B}{\mathcal{B}}
\def\fix{{\rm fix}}
\def\Cay{{\rm Cay}}
\def\Aut{{\rm Aut}}
\def\Haar{{\rm Haar}}
\def\tl{\triangleleft}
\def\la{{\langle}}
\def\ra{{\rangle}}
\def\Z{{\mathbb Z}}
\def\cal{\mathcal}
\def\Stab{{\rm Stab}}
\def\Iso{{\rm Iso}} 
\def\ABCI{{\rm ABCI}} 
\def\CI{{\rm CI}}
\begin{document}

\thanks{The work of the first author is supported in part by the  Slovenian Research Agency (research program P1-0285 and research projects N1-0140, N1-0160, J1-2451, N1-0208, J1-3001, J1-3003, J1-4008, and J1-50000), while the work of the second author is supported in part by the Slovenian Research Agency (research program P1-0285 and Young Researchers Grant).}

\begin{abstract}
Let $G$ be a group.  The BCI problem asks whether two Haar graphs of $G$ are isomorphic if and only if they are isomorphic by an element of an explicit list of isomorphisms.  We first generalize this problem in a natural way and give a theoretical way to solve the isomorphism problem for the natural generalization.  We then restrict our attention to abelian groups and, with an exception, reduce the problem to the isomorphism problem for a related quotient, component, or corresponding Cayley digraph.  For Haar graphs of an abelian group of odd order with connection sets $S$ those of Cayley graphs (i.e. $S = -S$), the exception does not exist.  For Haar graphs of cyclic groups of odd order with connection sets those of a Cayley graph, among others, we solve the isomorphism problem.  
\end{abstract}

\maketitle



This is the fourth paper in an unplanned series of papers whose aim now is to investigate the relationship between symmetries of Cayley digraphs of a group $G$ and other graphs and digraphs obtained from different actions of the group $G$ (other than the regular one as for Cayley digraphs), as well as similar relationships for problems that depend on symmetries.  The first paper in this series \cite{Dobson2022}, motivated by the work in this paper, gave the relationship between the automorphism group of Cayley digraphs and Haar graphs of abelian groups.  The second \cite{BarberDpreprint}, motivated by the problem of defining generalized wreath products for Cayley digraphs of nonabelian groups, not only did this, but also gave the relationship between the automorphism groups of Cayley digraphs and double coset digraphs of the same group and connection set.  The third \cite{BarberDRpreprint} gives a recognition theorem for which bicoset digraphs can be written as a nontrivial $X$-join with empty graphs, the bipartite analogue of \cite[Theorem 3.3]{BarberDpreprint}, which is the crucial tool in giving the relationship between the automorphism groups of Cayley digraphs and their corresponding double coset digraphs.  In this paper, we consider applications of the work in \cite{Dobson2022} to the isomorphism problem for Haar graphs of abelian groups.

It is well known that the isomorphism problem for Cayley digraphs depends on a property of its automorphism group (Lemma \ref{Ciso}).  It is also known that there is a relationship between the automorphism group of a Haar graph of a group $G$ with connection set $S$ (also called a bi-Cayley graph) and the automorphism group of the Cayley digraph of a group $G$ with connection set $S$ (see Theorem \ref{automorphism main result} (\ref{third}) for this relationship for abelian groups).  The original motivation for this paper was to see what information one could obtain about the isomorphism problem for Haar graphs from information about the isomorphism problem for Cayley digraphs.  We remark that as there is much that is known (and not known) about the Cayley isomorphism problem for digraphs, this is a reasonable idea to investigate.  In \cite{Dobson2022} the relationship between the automorphism group of a Haar graph of an abelian group and its corresponding Cayley digraph was studied, motivated by hopes of using that information to obtain results on isomorphisms between Haar graphs.

That being said, the existing isomorphism problem for Haar graphs is called the BCI problem (BCI stands for bi-Cayley isomorphism).  It asks whether two Haar graphs of a group $G$ are isomorphic by a specific list $L$ of maps (as does the more famous CI problem for Cayley digraphs).  We will show that the Cayley isomorphism problem can be generalized in at least two ways, and that the list of maps of the current BCI problem is {\it not} a shortest list of natural maps to check for isomorphism, but is {\it shorter} than that (it is, though, one of the ways the CI problem can be generalized).  Maps that interchange the natural bipartition of a Haar graph are missing from $L$.  We propose an alternative BCI problem and show that the list of possible maps we propose is the shortest list of elements that normalize the natural semiregular subgroup isomorphic to a group $G$ that is contained in the automorphism group of a Haar graph of $G$ (another way in which the CI problem can be generalized).  This is the subject of Section \ref{ABCI problem}.
As we are still motivated by the problem of transferring information about isomorphisms between Cayley digraphs to isomorphisms between Haar graphs, we are not only concerned with this process for CI-digraphs of $G$ (those with the ``nicest'' possible list of isomorphisms to check), but also with the isomorphism problem for all Cayley digraphs.  For the CI problem, there are results that reduce the problem to conjugacy classes of regular subgroups isomorphic to $G$ in the automorphism group of a Cayley digraph of $G$, and so in Section \ref{ABCI extensions} we develop similar results to reduce the isomorphism problem of Haar graphs of $G$ to conjugacy classes of semi-regular subgroups of $G$ in their automorphism groups. 

In \cite{Dobson2022}, automorphism groups of Haar graphs of abelian groups are shown to be in one of four families, two of which reduce to automorphism groups of subgraphs or quotients of Haar graphs, one that reduces to the automorphism group of their corresponding Cayley digraphs, and one other family that seems difficult to handle.  In Section \ref{three families} we consider the isomorphism problem for the first three families, reducing the problem to either the isomorphism problem for related quotients, subgraphs, or Cayley digraphs.  Recently, Dave Morris has shown \cite{Morris2021} that for Haar graphs of abelian groups whose corresponding connection set gives a Cayley graph, the fourth family does not exist.  We discuss applications of that result together with the other work in this paper in Section \ref{applications}, and, among other results, we solve the isomorphism problem for Haar graphs of cyclic groups of odd order whose connection sets are those of Cayley graphs (Corollary \ref{cyclic solution}).


\section{Preliminaries}

We begin with basic definitions and notation that will be used throughout the paper.  Other, more specific terms, will be given as needed.

\begin{definition}
Let $G$ be a group, and $S \subseteq G$. Define a \textbf{Cayley digraph of $G$}, denoted $\text{Cay}(G,S)$, to be the digraph with vertex set $V(\text{Cay}(G,S)) = G$ and arc set $A(\text{Cay}(G,S)) = \{(g,gs) : g \in  \nobreak G,  s \in S\}.$ We call $S$ the \textbf{connection set of $\text{Cay}(G,S)$}.
\end{definition}

The condition that $A(\Cay(G,S)) = \{(g,gs):g\in G,s\in S\}$ is equivalent to $(x,y)\in A(\Cay(G,S))$ if and only if $x^{-1}y\in S$.

\begin{definition}\label{leftregularrepresentation}
Let $G$ be a group, and $g\in G$.  Define $g_L\colon G\to G$ by $g_L(x) = gx$.  The map $g_L$ is a {\bf left translation of $G$}.  The {\bf left regular representation of $G$}, denoted $G_L$, is $G_L = \{g_L:g\in G\}$.  That is, $G_L$ is the group of all left translations of $G$. 
\end{definition}

It is straightforward to verify that $G_L$ is a group isomorphic to $G$.  It is also straightforward to verify that $G_L\le\Aut(\Cay(G,S))$.  

\begin{definition}
Let $G$ be a group, and $S\subseteq G$.  Define the {\bf Haar graph} $\Haar(G,S)$ with {\bf connection set} $S$ to be the graph with vertex set $\Z_2\times G$ and edge set $\{(0,g)(1,gs):g\in G {\rm \ and\ }s\in S\}$.
\end{definition}

We remark that some authors refer to Haar graphs as bi-Cayley graphs, and denote them accordingly.  As with Cayley graphs, note that $(0,x)(1,y)\in E(\Haar(G,S))$ if and only if $x^{-1}y\in S$.  Also note that Haar graphs are natural bipartite analogues of Cayley digraphs and have bipartition ${\cal B} = \{\{i\}\times G:i\in\Z_2\}$.  We set $B_i = \{\{i\}\times G\}$ so that ${\cal B} = \{B_0,B_1\}$, and let $F$ be the set-wise stabilizer of $B_0$ and $B_1$ in $\Aut(\Haar(G,S))$.  This notation will be used throughout the paper. Haar graphs were introduced in \cite{HladnikMP2002}. 

\begin{definition}
For $g\in G$, define $\widehat{g}_L : \mathbb{Z}_2 \times G \rightarrow \mathbb{Z}_2 \times G$ by $\widehat{g}_L(i,j) = (i, g_L(j))$.  It is straightforward to verify that $\widehat{g}_L$ is a bijection.   Define $\widehat{G}_L = \{\widehat{g}_L:g\in G\}$.  
\end{definition}

As with Cayley digraphs, it is straightforward to verify that $\widehat{G}_L$ is a group isomorphic to $G$ and $\widehat{G}_L\le \Aut(\Haar(G,S))$.  Note that while $G_L$ is transitive, $\widehat{G}_L$ has two orbits of size $\vert G\vert$.  We now give some permutation group theoretic terms.

\begin{definition}
Let $G\le S_n$ with orbit ${\cal O}$, and $g\in G$. Then $g$ induces a permutation on ${\cal O}$ by restricting the domain of $g$ to ${\cal O}$.  We denote the resulting permutation in $S_{\cal O}$ by $g^{\cal O}$.  The group $G^{\cal O} = \{g^{\cal O}:g\in G\}$ is the {\bf transitive constituent} of $G$ on ${\cal O}$.  
\end{definition}

\begin{definition}\label{imprimitive}
Let $X$ be a set, and let $G\le S_X$ be transitive.  A subset $C\subseteq X$ is a {\bf block} of $G$ if whenever $g\in G$, then $g(C)\cap C = \emptyset$ or $C$.  If $C = \{x\}$ for some $x\in X$ or $C = X$, then $C$ is a {\bf trivial block}.  Any other block is nontrivial.  Note that if $C$ is a block of $G$, then $g(C)$ is also a block of $G$ for every $g\in G$, and is called a {\bf conjugate block of $C$}.  The set of all blocks conjugate to $C$ is a partition of $X$, called a {\bf block system of $G$}.
\end{definition}

\begin{definition}
Let $G\le S_n$ be transitive with a block system ${\cal C}$.  By $\fix_G({\cal C})$ we denote the subgroup of $G$ that fixes each
block of ${\cal C}$ set-wise.  That is, $\fix_G({\cal C}) = \{g\in G:g(C) = C{\rm\ for\ all\ }C\in{\cal C}\}$.  
We denote the induced action of $G$ on the block system ${\cal C}$ by $G/{\cal C}$, and the action of an element $g\in G$ on ${\cal C}$ by $g/{\cal C}$.  That is, $g/{\cal C}(C) = C'$ if and only if $g(C) = C'$, and $G/{\cal C} = \{g/{\cal C}:g\in G\}$.  
\end{definition}

\section{The Alternative BCI problem}\label{ABCI problem}

To begin, we will calculate $N_{S_{\Z_2\times G}}(\widehat{G}_L)$.  This has been done more or less in \cite{ZhouF2016}, but the normalizers are not explicitly stated, nor is the structure of the normalizer given.  For completeness, we will derive them here.

\begin{lemma}\label{normalizer}
Let $G$ be a group, $A = \Aut(G)$, $g\in G$, and $\sigma \in S_G$.  Define $\bar{g},\widetilde{g},\tau,\widehat{\sigma}:\Z_2\times G\to\Z_2\times G$ by $$\bar{g}(0,x) = (0,x){\rm \ \ \ and\ \ \ }\bar{g}(1,x) = (1,xg^{-1}),$$ $$\widetilde{g}(0,x) = (0,xg^{-1}){\rm \ \ \  and\ \ \ }\widetilde{g}(1,x) = (1,x),$$ $$\tau(i,j) = (i + 1,j),\ \ \ {\rm and}\ \ \ \widehat{\sigma}(i,j) = (i,\sigma(j)).$$   Set $\bar{G} = \{\bar{g}:g\in G\}$, $\widetilde{G} = \{\widetilde{g}:g\in G\}$, and $\widehat{A} = \{\widehat{a}:a\in A\}$.  Then $\widetilde{g} = \tau^{-1}\bar{g}\tau$ for all $g\in G$, $\bar{G}$, and $\widetilde{G}$ are groups with $\widetilde{G}\bar{G}\cong G\times G$ and $N_{S_{\Z_2\times G}}(\widehat{G}_L) = \la\tau\ra\cdot \widehat{A}\cdot(\widetilde{G}\bar{G})$.
\end{lemma}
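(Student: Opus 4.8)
The plan is to prove the three assertions in turn, reserving the bulk of the work for the normalizer computation. The relation $\widetilde{g} = \tau^{-1}\bar{g}\tau$ and the group structure of $\bar{G}$ and $\widetilde{G}$ are routine: since $\tau^{-1}=\tau$, evaluating $\tau^{-1}\bar{g}\tau$ on $(0,x)$ and $(1,x)$ returns $\widetilde{g}$, while $\bar{g}\,\bar{h} = \overline{gh}$ and $\widetilde{g}\,\widetilde{h} = \widetilde{gh}$ show that $g\mapsto\bar{g}$ and $g\mapsto\widetilde{g}$ are isomorphisms onto $\bar{G}$ and $\widetilde{G}$. Because $\bar{g}$ fixes $B_0$ pointwise and acts only on $B_1$, while $\widetilde{g}$ fixes $B_1$ pointwise and acts only on $B_0$, the two groups commute elementwise and intersect trivially, so $\widetilde{G}\bar{G}\cong \widetilde{G}\times\bar{G}\cong G\times G$.

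For the normalizer, I would first establish the inclusion $\langle\tau\rangle\cdot\widehat{A}\cdot(\widetilde{G}\bar{G})\subseteq N_{S_{\Z_2\times G}}(\widehat{G}_L)$. As the normalizer is a subgroup, it suffices to check that each of $\tau$, $\widehat{a}$ (for $a\in A$), $\bar{g}$, and $\widetilde{g}$ normalizes $\widehat{G}_L$. A short evaluation shows that $\tau$, $\bar{g}$, and $\widetilde{g}$ in fact centralize $\widehat{G}_L$: the maps $\bar{g},\widetilde{g}$ are right translations on one block and the identity on the other, and right translations commute with left translations. For $\widehat{a}$ one computes $\widehat{a}^{-1}\widehat{g}_L\widehat{a} = \widehat{(a^{-1}(g))}_L$, so $\widehat{a}$ normalizes $\widehat{G}_L$.

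The reverse inclusion is the main point. Let $\phi\in N_{S_{\Z_2\times G}}(\widehat{G}_L)$. Since $B_0$ and $B_1$ are the two orbits of $\widehat{G}_L$ and $\phi$ normalizes $\widehat{G}_L$, $\phi$ permutes $\{B_0,B_1\}$; replacing $\phi$ by $\tau\phi$ if necessary (which remains in the normalizer), I may assume $\phi$ fixes $B_0$ and $B_1$ setwise, so $\phi(0,x)=(0,f(x))$ and $\phi(1,x)=(1,h(x))$ for some $f,h\in S_G$. Writing $\phi\widehat{g}_L\phi^{-1}=\widehat{g'}_L$ out on each block gives $f g_L f^{-1} = g'_L = h g_L h^{-1}$ for every $g$; that is, both $f$ and $h$ lie in $N_{S_G}(G_L)$, the normalizer of the left regular representation, and — crucially — they induce the \emph{same} automorphism of $G_L$, because the conjugate $\widehat{g'}_L$ restricts to $g'_L$ on \emph{both} blocks. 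The key structural input is the classical description $N_{S_G}(G_L)=\Aut(G)\cdot G_R$, where $G_R$ is the right regular representation (the centralizer of $G_L$ in $S_G$), with unique factorization. Writing $f=a\circ (c_0)_R$ and $h=a\circ(c_1)_R$ with a common $a\in\Aut(G)$, a direct evaluation on $(0,x)$ and $(1,x)$ shows $\phi=\widehat{a}\,\widetilde{c_0}\,\bar{c_1}$, hence $\phi\in\widehat{A}(\widetilde{G}\bar{G})$, which finishes the reverse inclusion.

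I expect the main obstacle to be the linking step in the reverse inclusion: verifying cleanly that a block-preserving normalizing element restricts to two elements of $N_{S_G}(G_L)$ that are \emph{forced} to share the same automorphism part $a$, and then that the right-translation parts $(c_0)_R$ and $(c_1)_R$ on the two blocks can be realized independently by $\widetilde{c_0}$ and $\bar{c_1}$. This hinges on having $N_{S_G}(G_L)=\Aut(G)\cdot G_R$ and its unique factorization firmly in hand; once that is in place, the surrounding verifications reduce to mechanical evaluations on $(0,x)$ and $(1,x)$.
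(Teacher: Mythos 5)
Your proposal is correct and takes essentially the same route as the paper: both reduce modulo $\tau$ to the block-preserving case and then identify such an element as a lifted automorphism times independent right translations on the two blocks, resting on the classical facts $C_{S_G}(G_L)=G_R$ and $N_{S_G}(G_L)=\Aut(G)\cdot G_R$ (Dixon--Mortimer). The only difference is bookkeeping: the paper first translates so the element fixes $(0,1_G)$, extracts the automorphism from the conjugation action, and then applies the centralizer description blockwise, whereas you factor each block component inside $N_{S_G}(G_L)$ and note that unique factorization forces the two automorphism parts to coincide --- the same argument in a slightly different order.
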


\begin{proof}
Straightforward calculations will show that $\tau,\widehat{\alpha},\widetilde{g}$, and $\bar{g}$ all normalize $\widehat{G}_L$ (this was also observed in \cite{ZhouF2016}).  For the calculations for $\widetilde{g}$ and $\bar{g}$, notice that we multiply on the right and that the centralizer in $S_G$ of $G_L$ is $G_R$ \cite[Lemma 4.2A]{DixonM1996}, where $G_R$ is the right regular representation of $G$.  Then $\widetilde{G}$ and $\bar{G}$ are canonically isomorphic to $G$, and the elements of $\widetilde{G}$ and $\bar{G}$ obviously commute, so $\widetilde{G}\bar{G}\cong G\times G$.  Clearly $\tau$ normalizes $\widetilde{G}\bar{G}$ and $\widehat{A}$, and $\widetilde{g} = \tau^{-1}\bar{g}\tau$ for all $g\in G$.  It is also clear that $\widehat{A}$ normalizes $\widetilde{G}\bar{G}$, and so $\widetilde{G}\bar{G}\tl \la\tau,\widehat{A},\widetilde{G},\bar{G}\ra = M$. Any element of $\widetilde{G}\bar{G}$ is either semiregular on $B_i$ or fixes each point of $B_{i+1}$, for some $i\in \Z_2$, while $\widehat{A}$ fixes a point in each of $B_0$ and $B_1$, namely $(0,1_G)$ and $(1,1_G)$. Thus $(\widetilde{G}\bar{G})\cap \widehat{A} = 1$.   Also, $\la\tau\ra\cap \widehat{A} = 1 = \la\tau\ra\cap (\widetilde{G}\bar{G})$.  Thus $M = \la\tau\ra\cdot \widehat{A}\cdot(\widetilde{G}\bar{G})$.

As $\widehat{G}_L^{B_i}$, $i\in\Z_2$, is transitive on $B_i$, and $\tau$ interchanges $B_0$ and $B_1$, we see that $N = N_{S_{\Z_2\times G}}(\widehat{G}_L)$ is transitive.  Additionally, as $\widehat{G}_L\tl N$, $N$ has ${\cal B}$ as a normal block system. As $\tau/{\cal B}\not = 1$ and $N/{\cal B}\cong\Z_2$, it suffices to show that $F = \fix_N({\cal B}) = \widehat{A}\cdot(\widetilde G\bar{G})$.  

Let $\omega \in F$.  As $\widetilde{G}$ is transitive on $B_0$, there exists $x\in G$ such that $\delta = \omega\widetilde{x}$ fixes $(0,1_G)$.  As $\omega$ and $\widetilde{x}$ normalize $\widehat{G}_L$, so does $\delta$.  Let $g\in G$.  Hence $\delta^{-1}\widehat{g}_L\delta =\widehat{k_g}_L$, for some $k_g\in G$.  The map $\alpha:G\to G$ given by $\alpha(g) = k_g$ is then an automorphism of $G$ as conjugation induces a bijection and  $$\delta^{-1}\widehat{gh}_L\delta = \delta^{-1}\widehat{g}_L\widehat{h}_L\delta = \delta^{-1}\widehat{g}_L\delta\cdot\delta^{-1}\widehat{h}_L\delta.$$ Equivalently, $\alpha(gh) = \alpha(g)\alpha(h)$.  As $\delta^{-1}\widehat{g}_L\delta = \widehat{k_g}_L$ for $h\in G$ and $i\in\Z_2$ we have 
\begin{eqnarray*}
\widehat{\alpha}^{-1}\delta^{-1}\widehat{g}_L\delta\widehat{\alpha}(i,h) & = & \widehat{\alpha}^{-1}\widehat{k_g}_L\widehat{\alpha}(i,h) = \widehat{\alpha}^{-1}\widehat{k_g}_L(i,\alpha(h))\\
                                                             & = & \widehat{\alpha}^{-1}(i,k_g\alpha(h)) = \widehat{\alpha}^{-1}(i,k_gk_h) \\
                                                             & = & \widehat{\alpha}^{-1}(i,k_{gh}) =  (i,gh) = \widehat{g}_L(h).
\end{eqnarray*}
Thus $\widehat{\alpha}^{-1}\delta^{-1}\bar{g}_L\delta\widehat{\alpha}= \widehat{g}_L$, $\delta\widehat{\alpha}$ centralizes $\widehat{G}_L$ and fixes $(0,1_G)$.  

As mentioned above, the centralizer in $S_G$ of $G_L$ is $G_R$.  Hence $\delta\widehat{\alpha}^{B_i}$ is naturally isomorphic to an element of $G_R$, $i\in\Z_2$, and so $\delta\widehat{\alpha}\in \widetilde{G}\bar{G}$.  As $G_R$ is regular and $\delta\widehat{\alpha}$ fixes $(0,1_G)$, we see that $\delta\widehat{\alpha}\in \bar{G}$.
Thus $\omega\widetilde{x}\widehat{\alpha} = \bar{g'}$ for some $g'\in G$, and $\omega = \bar{g'}\widehat{\alpha}^{-1}\widetilde{x}^{-1}$.  As $\widehat{A}$ normalizes $\bar{G}$, there is $g\in G$ such that $\bar{g'}\widehat{\alpha}^{-1} = \widehat{\alpha}^{-1}\bar{g}$.  Hence $\omega = \widehat{\alpha}^{-1}\bar{g}\widetilde{x}^{-1} = \widehat{\alpha}^{-1}\widetilde{x}^{-1}\bar{g}$.  We conclude that $F = \widehat{A}\cdot(\widetilde{G}\bar{G})$ as required.  
\end{proof}

Observe that $\widehat{G_L} = \widehat{G}_L$ as previously defined.  We will use the notation $\widehat{G}_L$ for $\widehat{G_L}$.

\begin{remark} 
An obvious question arises from the statement of Lemma \ref{normalizer}.  Where is $\widehat{G}_L$ in $N_{S_{\Z_2\times G}}(\widehat{G}_L)$?  As mentioned above, $G_R$ centralizes and so normalizes $G_L$, and, as also mentioned above, $N_{S_G}(G_L) = \Aut(G)\cdot G_L$.  This means $G_R\le \Aut(G)\cdot G_L$.  We also have $N_{S_G}(G_R) = \Aut(G)\cdot G_R$, and $G_L$ centralizes $G_R$.  So $G_L\le\Aut(G)\cdot G_R$.  Thus $\widehat{G}_L\le\Aut(G)\cdot(\widetilde{G}\bar{G})$.
\end{remark}

We now give the basic definition regarding the Cayley isomorphism problem in its most general form.

\begin{definition}
A {\bf Cayley object} $X$ of a group $G$ in a class ${\cal K}$ of combinatorial objects is one in which $G_L\le\Aut(X)$, the automorphism group of $X$.
\end{definition}
 
Although we will not dwell on the definition of a combinatorial object, the interested reader is referred to \cite{Muzychuk1999}, where a discussion of some definitions is given.

A classical result of Sabidussi \cite[Lemma 4]{Sabidussi1958} (see \cite[Theorem 1.20]{Book} for a more modern proof) shows that a digraph $\Gamma$ is isomorphic to a Cayley digraph of a group $G$ if and only if $\Gamma$ contains a regular subgroup isomorphic to $G$.  Thus the above definition is consistent with the definition of a Cayley digraph.

\begin{definition}
Let $\Gamma$ be a Cayley (di)graph of $G$ such that if $\Gamma'$ is any Cayley (di)graph of $G$, then $\Gamma$ and $\Gamma'$ are isomorphic if and only if they are isomorphic by a group automorphism of $G$.  Such a Cayley (di)graph of $G$ is called a {\bf CI-(di)graph of $G$}.  Similarly, for a Cayley object $X$ of $G$ in some class of combinatorial objects ${\cal K}$, we say that $X$ is a {\bf CI-object of $G$} if and only if whenever $X'$ is another Cayley object of $G$ in ${\cal K}$, then $X$ and $X'$ are isomorphic if and only if $\alpha(X) = X'$ for some $\alpha\in\Aut(G)$.   A group $G$ which has the property that any two Cayley objects of $G$ in ${\cal K}$ are isomorphic if and only if they are isomorphic by a group automorphism of $G$ is called a {\bf CI-group with respect to ${\cal K}$}. 
\end{definition}


Almost all results showing that a particular group is a CI-group with respect to some class of combinatorial objects make use of the following result due to Babai \cite{Babai1977}.

\begin{lemma}\label{Ciso}
Let $X$ be a Cayley object of $G$ in some class ${\cal K}$ of combinatorial objects.  Then the following are equivalent:
\begin{enumerate}
\item\label{Ciso1} $X$ is a CI-object of $G$ in ${\cal K}$,
\item\label{Ciso2} whenever $\phi\in S_G$ such that $\phi^{-1}G_L\phi\le\Aut(X)$, $G_L$ and $\phi^{-1}G_L\phi$ are conjugate in $\Aut(X)$.
\end{enumerate}
\end{lemma}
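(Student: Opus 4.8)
The plan is to prove both implications by exploiting the standard description of the normalizer of the left regular representation, namely $N_{S_G}(G_L) = \Aut(G)\cdot G_L$ (the holomorph of $G$), recorded in the Remark above. The guiding observation is that any two Cayley objects of $G$ share the common vertex set $G$, so any isomorphism between them is a permutation $\phi\in S_G$; moreover ``isomorphic by a group automorphism'' is precisely the statement that such a $\phi$ may be chosen inside $\Aut(G)$. Translating ``conjugacy of the regular subgroups $G_L$ and $\phi^{-1}G_L\phi$ in $\Aut(X)$'' into ``existence of such a $\phi\in\Aut(G)$'' is the entire content, and the holomorph fact is what bridges the two.

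For $(\ref{Ciso1})\Rightarrow(\ref{Ciso2})$, I would start with $\phi\in S_G$ satisfying $\phi^{-1}G_L\phi\le\Aut(X)$ and form the object $X' = \phi(X)$. Conjugating by $\phi$ gives $\Aut(X') = \phi\,\Aut(X)\,\phi^{-1}$, so from $\phi^{-1}G_L\phi\le\Aut(X)$ I obtain $G_L\le\Aut(X')$; hence $X'$ is again a Cayley object of $G$. Since $X'=\phi(X)\cong X$ and $X$ is a CI-object, there is $\alpha\in\Aut(G)$ with $\alpha(X)=X'=\phi(X)$. Then $\delta := \phi^{-1}\alpha$ fixes $X$, so $\delta\in\Aut(X)$, and because $\alpha\in\Aut(G)$ normalizes $G_L$ I can compute
$$\phi^{-1}G_L\phi = \delta\,\alpha^{-1}G_L\alpha\,\delta^{-1} = \delta G_L\delta^{-1},$$
which exhibits $G_L$ and $\phi^{-1}G_L\phi$ as conjugate in $\Aut(X)$.

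For the converse $(\ref{Ciso2})\Rightarrow(\ref{Ciso1})$, I would take any Cayley object $X'$ of $G$ isomorphic to $X$, say $\phi(X)=X'$ with $\phi\in S_G$. Conjugating gives $\Aut(X)=\phi^{-1}\Aut(X')\phi$, and since $G_L\le\Aut(X')$ this yields $\phi^{-1}G_L\phi\le\Aut(X)$. By $(\ref{Ciso2})$ there is $\delta\in\Aut(X)$ with $\delta^{-1}G_L\delta = \phi^{-1}G_L\phi$, so $\psi := \phi\delta^{-1}$ satisfies $\psi^{-1}G_L\psi = G_L$ and thus normalizes $G_L$. Invoking $N_{S_G}(G_L) = \Aut(G)\cdot G_L$, I write $\psi = g_L\alpha$ with $g_L\in G_L$ and $\alpha\in\Aut(G)$. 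Since $\delta(X)=X$ and $\phi(X)=X'$ we have $\psi(X)=X'$, and as $g_L\in G_L\le\Aut(X')$ fixes $X'$, it follows that $\alpha(X)=g_L^{-1}\psi(X)=X'$. Hence $X$ and $X'$ are isomorphic by a group automorphism, so $X$ is a CI-object.

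Each step is short; the part requiring the most care is the bookkeeping with composition order and with the direction of the isomorphisms. In particular one must verify that $X'=\phi(X)$ really is a Cayley object of $G$, and apply the holomorph decomposition $\psi=g_L\alpha$ on the correct side so that the factor lying in $\Aut(X')$ is $g_L$ rather than $\alpha$. Beyond this careful bookkeeping, everything reduces to the single structural fact $N_{S_G}(G_L)=\Aut(G)\cdot G_L$.
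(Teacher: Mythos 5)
Your proof is correct, and it is essentially the argument this lemma always receives: the paper itself states Lemma \ref{Ciso} without proof (citing Babai), but your two implications follow exactly the pattern the paper uses for its bi-Cayley analogue, Lemma \ref{contool1} — the forward direction forms $X'=\phi(X)$, notes $\Aut(\phi(X))=\phi\Aut(X)\phi^{-1}$ to see $X'$ is again a Cayley object, and converts the CI-isomorphism $\alpha$ into a conjugating element $\phi^{-1}\alpha\in\Aut(X)$; the reverse direction builds the normalizing element $\psi=\phi\delta^{-1}$ and splits it via $N_{S_G}(G_L)=\Aut(G)\cdot G_L$ so that the regular factor is absorbed into $\Aut(X')$. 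Your bookkeeping (including writing $\psi=g_L\alpha$ rather than $\alpha g_L$, which is legitimate since $G_L\cdot\Aut(G)=\Aut(G)\cdot G_L$) is sound, so there is nothing to correct.
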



For groups that are not CI-groups with respect to digraphs, Muzychuk \cite{Muzychuk1999} introduced the notion of a solving set.  The notion of a solving set is simply to give a set of permutations for which isomorphism between two Cayley digraphs of $G$ can be tested.  That is, two Cayley digraphs of $G$ are isomorphic if and only if they are isomorphic by an element of the solving set.  As the image of a Cayley digraph of $G$ under a group automorphism of $G$ is again a Cayley digraph of $G$, members of $\Aut(G)$ are typically in a solving set (although if an automorphism of $G$ is in the automorphism group of the Cayley digraph, they do not need to be formally in the solving set).  Additionally, by \cite[Lemma 4.2A]{DixonM1996}, $N_{S_G}(G_L) = \Aut(G)\cdot G_L$, and so $\Aut(G) = \Stab_{N_{S_G}(G_L)}(1_G)$.  That is, the elements which must always be tested to determine isomorphism normalize $G_L$ and fix $1_G$.  Later, in \cite{Dobson2014}, the notion of a CI-extension was developed.  The idea behind a CI-extension is to give an analogue of Babai's Lemma for solving sets.  Namely, a conjugation condition is given for a set $T$ of permutations in $S_G$, and a solving set for $G$ can then be obtained from $T$ and $\Aut(G)$.

There are then at least two ways in which one can generalize the Cayley isomorphism problem to the ``bi-Cayley Isomorphism Problem''.  Namely, one can determine the smallest set of possible isomorphisms that must be tested to determine isomorphism, or to test isomorphisms between Haar graphs using the elements of $N_{S_{\Z_2\times G}}(\widehat{G}_L)$ which fix $(0,1_G)$.  Unfortunately, and unlike in the Cayley isomorphism problem for digraphs, these two sets are not the same.  The original BCI problem, introduced in \cite{XuJSL2008}, calls a group $G$ a BCI-group if and only if two Haar graphs of $G$ are isomorphic by an element of $\bar{G}\widehat{A}$, which is the set of elements of $N_{S_{\Z_2\times G}}(\widehat{G}_L)$ that fix $(0,1_G)$.  However, the map $\tau$ given in Lemma \ref{normalizer} normalizes $\widehat{G}_L$, does not fix any points, and is not contained in $\widehat{G}_L$.  However, we note that the automorphism group of any Haar graph of an abelian group $A$ contains the map $(i,j)\mapsto(i+1,-j)$, which is a product of $\tau$ and the map $(i,j)\mapsto(i,-j)$, an automorphism of $\widehat{A}_L$.  This means that of the two possible ways one can generalize the Cayley isomorphism to the BCI problem, for abelian groups they are identical.

For nonabelian groups, though, the automorphism group of a Haar graph need not be vertex-transitive \cite{LuWX2006}, and so some nonabelian groups (for example, the dihedral group $D_p$ of order $2p$ where $p$ is prime and large enough) may be non-BCI-groups simply because there is some Haar graph whose automorphism group does not have an automorphism which switches the natural bipartition classes, while the normalizer of $\widehat{G}_L$ does.  To the authors, this does not really seem a good reason for a group to be non-BCI.  Thus we will use the other generalization of the CI-property and consider a smallest set of possible isomorphisms that must be tested to determine isomorphism.  We will call these groups ABCI-groups (for alternative bi-Cayley isomorphism), as we use the alternative generalization of the CI-property (the formal definition will be given below).  We first determine a list $L$ of maps contained in $N_{S_{\Z_2\times G}}(\widehat{G}_L)$ such that two Haar graphs of $G$ are isomorphic by an element of $N_{S_{\Z_2\times G}}(\widehat{G}_L)$ if and only if they are isomorphic by an element in $L$.  As this proof is entirely group theoretic and depends only upon $N_{S_{\Z_2\times G}}(\widehat{G}_L)$, we state and prove it in more generality as it requires no extra work.  We will, though, need an additional definition.

\begin{definition}
Let $G$ be a group, and $X$ a combinatorial object in some class ${\cal K}$ of combinatorial objects.  We say $X$ is a {\bf Haar object of $G$} if $\Aut(X)$, the automorphism group of $X$, contains $\widehat{G}_L$.
\end{definition}

\begin{lemma}\label{biCayley Iso}
Using the notation of Lemma \ref{normalizer}, two Haar objects of a group $G$ in some class ${\cal K}$ of combinatorial objects are isomorphic by an element of $N_{S_{\Z_2\times G}}({\widehat{G}_L})$ if and only if they are isomorphic by an element of $N_{S_{\Z_2\times G}}(\widehat{G}_L)$ of the form $\tau^i\widehat{\alpha}\bar{g}$, with $i\in\Z_2,\alpha\in\Aut(G)$, and $g\in G$.
\end{lemma}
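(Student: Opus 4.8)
The plan is to prove the nontrivial direction, since the reverse implication is immediate: every map of the form $\tau^i\widehat{\alpha}\bar{g}$ already lies in $N := N_{S_{\Z_2\times G}}(\widehat{G}_L)$, so an isomorphism of that shape is in particular an isomorphism by an element of $N$. So suppose $\phi\in N$ carries the Haar object $X$ to the Haar object $Y$. By Lemma \ref{normalizer} we may write $\phi = \tau^i\widehat{\alpha}\widetilde{h}\bar{g}$ for some $i\in\Z_2$, $\alpha\in\Aut(G)$ and $h,g\in G$, so the only feature separating $\phi$ from the desired form is the factor $\widetilde{h}\in\widetilde{G}$. The idea is to erase this factor by replacing $\phi$ with $\phi\psi$ for a suitable $\psi\in\widehat{G}_L$: because $X$ is a Haar object we have $\widehat{G}_L\le\Aut(X)$, hence $\psi(X)=X$ and $\phi\psi$ is again an isomorphism from $X$ to $Y$ lying in $N$.

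The element that does the job is $\psi = \widehat{h}_L$. To see this I would first record the factorization of a left translation inside the normalizer that underlies the Remark following Lemma \ref{normalizer}: writing $\gamma_g\in\Aut(G)$ for conjugation by $g$, a direct check on each of $B_0$ and $B_1$ gives $\widehat{g}_L = \widehat{\gamma_g}\,\widetilde{g^{-1}}\,\bar{g^{-1}}$. Thus $\widehat{G}_L$ genuinely has a nontrivial $\widetilde{G}$-component in the decomposition $N=\la\tau\ra\cdot\widehat{A}\cdot(\widetilde{G}\bar{G})$, and it is exactly this component that will cancel $\widetilde{h}$. I would also note the elementary relations that keep the bookkeeping clean: the maps $\widehat{\cdot}\colon\Aut(G)\to\widehat{A}$, $\widetilde{\cdot}\colon G\to\widetilde{G}$ and $\bar{\cdot}\colon G\to\bar{G}$ are homomorphisms, the factors $\widetilde{G}$ and $\bar{G}$ commute with one another, and, because left and right translations of $G$ commute, $\widehat{G}_L$ centralizes $\widetilde{G}\bar{G}$.

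Granting these, the computation is short: using that $\widehat{h}_L$ commutes with $\widetilde{h}\bar{g}$ and then substituting the factorization above,
\[
\phi\,\widehat{h}_L = \tau^i\widehat{\alpha}\,\widehat{h}_L\,\widetilde{h}\bar{g} = \tau^i\,\widehat{\alpha}\,\widehat{\gamma_h}\,\widetilde{h^{-1}}\,\bar{h^{-1}}\,\widetilde{h}\,\bar{g} = \tau^i\,\widehat{\alpha\gamma_h}\,\bar{h^{-1}g},
\]
where the last step combines $\widehat{\alpha}\widehat{\gamma_h}=\widehat{\alpha\gamma_h}$ and cancels the $\widetilde{G}$-factors via $\widetilde{h^{-1}}\widetilde{h}=\widetilde{1}$. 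The outcome is of the required form $\tau^{i}\widehat{\alpha'}\bar{g'}$ with $\alpha'=\alpha\gamma_h\in\Aut(G)$ and $g'=h^{-1}g$, and since $\phi\widehat{h}_L$ is an isomorphism from $X$ to $Y$, this proves the claim. The only real content is the observation, made visible by the Remark, that $\widehat{G}_L$ is not contained in the point-stabilizer part $\widehat{A}$ of $F$ but spreads across $\widetilde{G}\bar{G}$ as well; once $\widehat{g}_L$ is expanded as above, cancellation of the $\widetilde{G}$-factor is forced, so I do not anticipate a genuine obstacle beyond recording these factorizations and commutation relations correctly.
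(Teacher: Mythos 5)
Your proof is correct, and it follows the same overall strategy as the paper's: decompose $\phi = \tau^i\widehat{\alpha}\widetilde{h}\bar{g}$ via Lemma \ref{normalizer}, then absorb the unwanted $\widetilde{G}$-factor by composing with an element of $\widehat{G}_L\le\Aut(X)$. However, your execution is in fact more careful than the paper's, and it repairs a genuine error there. The paper composes with $\widehat{h}_L^{-1}$ and asserts the identity $\widetilde{h}\bar{g'}\widehat{h}_L^{-1} = \overline{g'h^{-1}}$; this is false. Since $\widehat{h}_L^{-1}$ commutes with $\widetilde{h}\bar{g'}$ (left and right translations commute), the left-hand side acts on $B_0$ by $x\mapsto h^{-1}xh^{-1}$, which is not the identity unless $h$ is a central involution; so composing with $\widehat{h}_L^{-1}$ makes the $\widetilde{G}$-part $\widetilde{h^{2}}$ rather than cancelling it. The correct multiplier is $\widehat{h}_L$, exactly as you chose, and your factorization $\widehat{h}_L = \widehat{\gamma_h}\,\widetilde{h^{-1}}\,\bar{h^{-1}}$ (with $\gamma_h$ conjugation by $h$) makes the cancellation transparent: $\phi\widehat{h}_L = \tau^i\,\widehat{\alpha\gamma_h}\,\overline{h^{-1}g}$, which one can confirm by evaluating both sides on $B_0$ and $B_1$. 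Your computation also records what the paper's formula misses for nonabelian $G$: the automorphism coordinate must change from $\alpha$ to $\alpha\gamma_h$, and the translation is $\overline{h^{-1}g}$ rather than $\overline{gh^{-1}}$ (these distinctions evaporate when $G$ is abelian, which is perhaps how the slip went unnoticed). The conclusion of the lemma is unaffected, since $\alpha\gamma_h\in\Aut(G)$, but as written the paper's displayed equation does not hold, whereas your argument is complete and correct.
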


\begin{proof}
Let $X$ and $Y$ be two isomorphic Haar objects of $G$ in some class ${\cal K}$ of combinatorial objects with $\phi\in N_{S_{\Z_2\times G}}(\widehat{G}_L)$ such that $\phi(X) = Y$.  By Lemma \ref{normalizer} we see that $\phi = \tau^i\widehat{\alpha}\widetilde{h}\bar{g'}$, with $i\in\Z_2$, $\alpha\in\Aut(G)$, and $h,g'\in G$.  Then $\widehat{h}_L^{-1}\in\Aut(X)$ and 
$$Y = \tau^i\widehat{\alpha}\widetilde{h}\bar{g'}\widehat{h}_L^{-1}(X) = \tau^i\widehat{\alpha}{\overline{g'h^{-1}}}(X)$$
and the result follows with $g = g'h^{-1}$ as the converse is trivial.
%
%
%
%
\end{proof}

This result shows that, theoretically, we only need to check elements of the form $\tau^i\widehat{\alpha}\bar{g}$ to check whether two Haar graphs are isomorphic by an element of $N_{S_{\Z_2\times G}}(\widehat{G}_L)$.  However, it is possible that a shorter list might suffice.  In \cite{MorrisS2024}, Joy Morris and Pablo Spiga showed that, with the exception of Haar graphs of abelian groups, there are only 20 groups $G$ for which there is no Haar graph with automorphism group $\widehat{G}_L$.  For Haar graphs $\Haar(G,S)$ with $G$ nonabelian and not on this list with $\Aut(\Haar(G,S)) = \widehat{G}_L$, the image of $\Haar(G,S)$ under any map of the form $\tau\widehat{\alpha}\bar{g}$ will be a Haar graph of $G$ different from $\Haar(G,S)$, and moreover, the images are also pairwise different (otherwise there will be an ``extra'' automorphism in $\Aut(\Haar(G,S))$).  For such groups, the shortest possible list of elements of the normalizers to check isomorphism will be those of the form $\tau\widehat{\alpha}\bar{g}$.   We can say more for Haar graphs of abelian groups.

\begin{lemma}\label{abelian Haar iso}
Using the notation of Lemma \ref{normalizer}, two Haar graphs of an abelian group $A$ are isomorphic by an element of $N_{S_{\Z_2\times A}}(\widehat{A}_L)$ of the form $\widehat{\alpha}\bar{a}$, with $\alpha\in\Aut(A)$, and $a\in A$.
\end{lemma}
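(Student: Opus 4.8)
The plan is to start from Lemma \ref{biCayley Iso}, which already reduces the problem to isomorphisms of the form $\tau^i\widehat{\alpha}\bar a$ with $i\in\Z_2$, $\alpha\in\Aut(A)$, and $a\in A$; the only thing left to remove is the factor $\tau^i$ when $i=1$. So suppose $X=\Haar(A,S)$ and $Y=\Haar(A,T)$ are isomorphic by some $\phi\in N_{S_{\Z_2\times A}}(\widehat{A}_L)$. By Lemma \ref{biCayley Iso} we may assume $\phi=\tau^i\widehat{\alpha}\bar a$. If $i=0$ then $\phi=\widehat{\alpha}\bar a$ is already of the desired form and there is nothing to do, so the whole content is the case $i=1$, where $\phi=\tau\widehat{\alpha}\bar a$ and we must trade the bipartition-swapping factor $\tau$ for a bipartition-preserving isomorphism.

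The device I would use is the ``extra'' automorphism available for abelian groups. Let $\alpha_0\in\Aut(A)$ be inversion, $\alpha_0(x)=x^{-1}$, which is an automorphism precisely because $A$ is abelian, and set $\iota=\tau\widehat{\alpha_0}$, so that $\iota(i,j)=(i+1,j^{-1})$. As observed in the paragraph preceding this lemma, $\iota$ lies in the automorphism group of every Haar graph of $A$; in particular $\iota\in\Aut(Y)$. Composing on the \emph{left} with $\iota$ therefore produces another isomorphism $\iota\phi$ from $X$ to $Y$, and this is the step that removes $\tau$: since $\tau$ commutes with every $\widehat{\beta}$ and $\tau^2=1$, I would compute
$$\iota\phi=\tau\widehat{\alpha_0}\,\tau\widehat{\alpha}\bar a=\tau^2\widehat{\alpha_0}\widehat{\alpha}\bar a=\widehat{\alpha_0\alpha}\,\bar a,$$
so that with $\beta=\alpha_0\alpha\in\Aut(A)$ we obtain an isomorphism $\iota\phi=\widehat{\beta}\bar a$ from $X$ to $Y$ of exactly the required form. (The converse direction is immediate, as $\widehat{\beta}\bar a\in N_{S_{\Z_2\times A}}(\widehat{A}_L)$.)

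Two points deserve care, and the first is the genuine obstacle. The claim $\iota\in\Aut(\Haar(A,R))$ for arbitrary $R\subseteq A$ is where commutativity is essential: an edge $(0,x)(1,y)$ with $x^{-1}y\in R$ is sent by $\iota$ to the pair $(0,y^{-1})(1,x^{-1})$, which is an edge if and only if $yx^{-1}\in R$, and $yx^{-1}=x^{-1}y$ holds only because $A$ is abelian; this is exactly the feature that fails in general and is why the statement is special to abelian groups. The second point is that one should compose on the correct side. Using instead $\iota\in\Aut(X)$ and forming $\phi\iota$ forces $\tau$ to pass through $\bar a$, and by Lemma \ref{normalizer} this converts $\bar a$ into $\widetilde a$, leaving a residual $\widetilde{\ }$-factor that must then be cleared using $\widehat{A}_L\le\Aut(X)$ together with the abelian identity $\widehat{a}_L^{-1}=\widetilde a\,\bar a$; composing on the left as above avoids this detour entirely, since $\tau$ need only commute past the hatted maps while $\bar a$ stays fixed.
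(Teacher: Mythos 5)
Your proof is correct and follows essentially the same route as the paper's: both reduce to the form $\tau^i\widehat{\alpha}\bar{a}$ via Lemma \ref{biCayley Iso} and then compose on the left with the map $(i,j)\mapsto(i+1,-j)$ (an automorphism of every Haar graph of an abelian group, written in the paper as $\widehat{\iota}^{-i}\tau^{-i}$ and in your argument as $\tau\widehat{\alpha_0}$) to absorb the bipartition-swapping factor. Your explicit verification that this map is an automorphism, and your remark on left versus right composition, are just expansions of steps the paper leaves implicit.
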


\begin{proof}
If $T\subseteq A$ such that $\Haar(A,T)$ is isomorphic to $\Haar(A,S)$ by an element of $N_{S_{\Z_2\times G}}(\widehat{G}_L)$, then an isomorphism from $\Haar(A,S)$ to $\Haar(A,T)$ has the form $\tau^i\widehat{\alpha}\bar{a}$ for some $i\in\Z_2,\alpha\in\Aut(G)$, and $a\in A$ by Lemma \ref{biCayley Iso}.  Define $\iota: A\to A$ by $\iota(a) = -a$.  As $A$ is abelian, $\widehat{\iota}^{-i}\tau^{-i}\in\Aut(\Haar(A,T))$, and so $\widehat{\iota}^{-i}\tau^{-i}\tau^i\widehat{\alpha}\bar{a}(\Haar(A,S)) = \Haar(A,T)$ and $\widehat{\iota}^{-i}\tau^{-i}\tau^i\widehat{\alpha}\bar{a} = \widehat{-\alpha}\bar{a}$. 
\end{proof}

An infinite family of Haar graphs of nonabelian groups which are vertex-transitive but do not contain $\tau$ exists \cite[Corollary 10]{ConderEP2018}.  The element of the automorphism groups of these graphs that interchanges the bipartition sets also normalizes $\widehat{G}_L$ (this can be extracted from information given about their automorphism groups at the end of the proof of \cite[Theorem 8]{ConderEP2018}).  We have the obvious question of whether the previous result can always be generalized to vertex-transitive Haar graphs.  

\begin{problem}
Does there exist a vertex-transitive Haar graph $\Haar(G,S)$ for which there is no automorphism that interchanges the bipartition sets that normalizes $\widehat{G}_L$?
\end{problem}

When considering the CI problem, $\Aut(G)$ is the set of all elements of the normalizer of $G_L$ in $N_{S_G}(G_L)$ that fix $1_G$.  These are also all elements of the normalizer for which any two Cayley graphs of a group $G$ are isomorphic if they are isomorphic by an element of the normalizer.  Hence the {\it set} of elements $\{\tau^i\widehat{\alpha}\bar{g}:i\in\Z_2,\alpha\in A,g\in G\}$ for the alternative BCI problem plays the same role that $\Aut(G)$ plays for the CI problem.

\begin{definition}
Let $G$ be a group.  Using the notation of Lemma \ref{normalizer}, we define $\Iso(G)$ to be $\{\tau^i\widehat{\alpha}\bar{g}:i\in\Z_2,\alpha\in A,g\in G\}$.
\end{definition}

With the above lemma and discussion in mind, we make the following definitions.

\begin{definition}
Let $G$ be a group, and $S\subseteq G$.  We say that $\Haar(G,S)$ is an {\bf $\ABCI$-graph of $G$} if whenever $T\subseteq G$ then $\Haar(G,S)\cong\Haar(G,T)$ if and only if they are isomorphic by a map in $\Iso(G)$.    
\end{definition}

\begin{definition}
Let $G$ be a group.  We say that $G$ is an {\bf $\ABCI$-group} if whenever $S,T\subseteq G$ then $\Haar(G,S)$ and $\Haar(G,T)$ are isomorphic if and only if they are isomorphic by a map in $\Iso(G)$.
\end{definition}

Observe that if $\Haar(G,S)$ is a BCI-graph, then $\Haar(G,S)$ is an $\ABCI$-graph, and similarly, if $G$ is a BCI-group then $G$ is an $\ABCI$-group.  The converse, though, need not true.  We have the following problem:

\begin{problem}
Determine all $\ABCI$-groups.
\end{problem}

\section{ABCI-extensions}\label{ABCI extensions}

Most of the time, results on isomorphisms of Haar graphs focus on the BCI problem.  However, our main motivation for writing this paper is to see what the implications of isomorphisms of Cayley digraphs of $G$ are for isomorphisms of Haar graphs.  So we are interested in more than just the ABCI problem and thus need tools to solve the problem of finding isomorphisms between Haar graphs that are not ABCI-graphs.  The analogue for this problem in the CI problem are CI-extensions introduced in \cite{Dobson2014}, which give a systematic way to find solving sets for Cayley digraphs of a group $G$ introduced in \cite{Muzychuk1999}.  This very much works in the same way as the results in \cite{Dobson2014}, when one makes the obvious changes.  So the statements and proofs are very similar to those in \cite{Dobson2014}.  The proofs are included for completeness.  These types of results are mainly group theoretic, and as it will require little extra work, we work in the more general context of Haar objects.

\begin{definition}\label{solving set def}
Let $G$ be a finite group.  We say that $S\subseteq S_{\Z_2\times G}$ is a {\bf
solving set for a Haar object $X$} in a class of Haar objects
${\cal K}$ if the following hold:
\begin{enumerate}
\item for every $X'\in {\cal K}$ such that $X\cong X'$, there exists $s\in S$ such that $s(X) = X'$, 
\item if $s\in S$ and $s(0,1_G)\in B_0$ then $s(0,1_{G}) = (0,1_{G})$, and 
\item $\Iso(G)\subseteq S$. 
\end{enumerate}
\end{definition}

Condition (1) ensures that all Haar objects isomorphic to $X$ are isomorphic by a map in the solving set.  At this time, Condition (2) serves no obvious purpose.  We will point out explicitly where this condition is used in the proof of Lemma \ref{contool1} and thus the reason for its inclusion in the definition.  By Lemma \ref{biCayley Iso}, Condition (3) ensures that the solving set contains all elements of $N_{S_{\Z_2\times G}}(\widehat{G}_L)$ that must be checked for isomorphisms.

\begin{definition}\label{ABCI def}
Let $G$ be a finite group, and $X$ a Haar object of $G$ in a class ${\cal K}$ of combinatorial objects.  We define an {\bf ABCI-extension of $G$ with respect to $X$}, denoted $\ABCI(G,X)$, to be a set of permutations in $S_{\mathbb{Z}_{2} \times G}$ that satisfies the following properties:
\begin{enumerate}
    \item $\ABCI(G,X)$ contains the identity permutation in $S_{\Z_2\times G}$,
    \item if $t\in\ABCI(G,X)$ and $t(0,1_G)\in B_0$, then $t(0,1_G) = (0,1_G)$, and 
    \item whenever $\phi\in S_{\Z_2\times G}$ such that $\phi^{-1}\widehat{G}_L\phi\le\Aut(X)$, then there exists $v\in\Aut(X)$ such that $v^{-1}\phi^{-1}\widehat{G}_L\phi v = t^{-1}\widehat{G}_Lt$ for some $t\in \ABCI(G,X)$. 
\end{enumerate}  In the case where $X$ is a Haar graph (i.e. $X = \Haar(G,S)$), we will write $\ABCI(G,S)$.
\end{definition}

\begin{lemma}\label{existence}
Let $G$ be a finite group, and $X$ a Haar object of $G$ in some
class ${\cal K}$ of combinatorial objects.  Then an $\ABCI(G,X)$ exists.
\end{lemma}

\begin{proof}
To show existence, we only need show that there is a set of permutations $T$ in $S_{\Z_2\times G}$ that contains the identity such that whenever $\phi\in S_{\Z_2\times G}$ such that $\phi^{-1}\widehat{G}_L\phi\le\Aut(X)$ and $v\in\Aut(X)$, then $v^{-1}\phi^{-1}\widehat{G}_L\phi v = t^{-1}\widehat{G}_Lt$ for some $t\in T$ and $t(0,1_G) = (0,1_G)$ or $t(B_0) = B_1$.  This follows almost immediately.  If $\phi v(0,1_g)\not\in B_0$, let $t_{\phi v} = \phi v$.  If $\phi v(0,1_g)\in B_0$, then there exists $g\in G$ such that $\phi v(0,1_G) = (0,g)$.  Let $t_{\phi v} = \widehat{g}_L^{-1}\phi v$.  Then $t_{\phi v}(0,1_G) = \widehat{g}_L^{-1}\phi v(0,1_G) = \widehat{g}_L^{-1}(0,g) = (0,g^{-1}g) = (0,1_G)$, and existence is established with $T = \{1,t_{\phi v}:\phi^{-1}\widehat{G}_L\phi\le\Aut(X), v\in \Aut(X)\}$.
\end{proof}

%

Note that for $X$ a Haar object of $G$ in ${\cal K}$, an $\ABCI(G,X)$ is not unique as if $T$ is an ABCI-extension of $X$ with respect to $G$, then for $\alpha\in N_{S_{\Z_2\times G}}(\widehat{G}_L)$, $\{1,\widehat{\alpha} t:t\in T\}$ is also an ABCI-extension of $X$ with respect to $G$.  The following result shows the importance of $\ABCI(G,X)$, as if $\ABCI(G,X)$ is known, then the isomorphism problem is solved for $X$.

\begin{lemma}\label{contool1}
Let $G$ be a finite group, and $X$ a Haar object of $G$ in some class ${\cal K}$ of combinatorial objects.  Let $T\subseteq S_{\Z_2\times G}$ that contains the identity in $S_{\Z_2\times G}$ and if $t\in T$ such that $t(0,1_G)\in B_0$, then $t(0,1_G) = (0,1_G)$. The following are equivalent:
\begin{enumerate}
\item $S = \{\alpha t:\alpha\in \Iso(G), t\in T \}$ is a solving set for $X$,
\item $T$ is an $\ABCI(G,X)$.
\end{enumerate}
\end{lemma}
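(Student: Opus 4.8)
The plan is to prove the two implications separately. The engine in both directions is the dictionary between isomorphisms of Haar objects and conjugation of $\widehat{G}_L$: if a permutation $\psi$ carries one Haar object of $G$ to another, then $\psi^{-1}\widehat{G}_L\psi$ lies in the automorphism group of the first, and conversely. Lemma~\ref{normalizer} together with the reduction in Lemma~\ref{biCayley Iso} then lets me move freely between elements of $N_{S_{\Z_2\times G}}(\widehat{G}_L)$ and elements of $\Iso(G)$.

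For $(1)\Rightarrow(2)$, I note that conditions (1) and (2) in Definition~\ref{ABCI def} are precisely the standing hypotheses placed on $T$, so only the conjugacy condition (3) needs verification. Given $\phi$ with $\phi^{-1}\widehat{G}_L\phi\le\Aut(X)$, the image $\phi(X)$ is again a Haar object of $G$, since $\Aut(\phi(X))=\phi\Aut(X)\phi^{-1}\ge\widehat{G}_L$. Because $S$ is a solving set there is $s\in S$ with $s(X)=\phi(X)$; writing $s=\alpha t$ with $\alpha\in\Iso(G)$ and $t\in T$, the map $v=\phi^{-1}s$ lies in $\Aut(X)$, and since $\alpha$ normalizes $\widehat{G}_L$ a one-line computation gives $v^{-1}\phi^{-1}\widehat{G}_L\phi v=s^{-1}\widehat{G}_Ls=t^{-1}\widehat{G}_Lt$. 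This is exactly condition (3), so $T$ is an $\ABCI(G,X)$.

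For $(2)\Rightarrow(1)$, I would check the three defining properties of a solving set in turn. Property (3) is immediate because $1\in T$ forces $\Iso(G)\subseteq S$. For property (1), take isomorphic Haar objects $X,X'$ with $\phi(X)=X'$; then $\phi^{-1}\widehat{G}_L\phi\le\Aut(X)$, and the $\ABCI$-condition yields $t\in T$ and $v\in\Aut(X)$ with $v^{-1}\phi^{-1}\widehat{G}_L\phi v=t^{-1}\widehat{G}_Lt$. Rearranging shows $\phi vt^{-1}$ normalizes $\widehat{G}_L$, so Lemma~\ref{normalizer} lets me write it as $\tau^i\widehat{a}\widetilde{g}\bar{h}$; calling this $\alpha$ gives $\alpha t(X)=\phi(X)=X'$. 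It remains to replace $\alpha$ by a genuine element of $\Iso(G)$. Here I would use the Remark that $\widehat{G}_L\le\widehat{A}\cdot(\widetilde{G}\bar{G})$ to select $\widehat{k}_L\in\widehat{G}_L\le\Aut(X')$ so that $\widehat{k}_L\alpha$ fixes $(0,1_G)$ (when $i=0$) or carries $(0,1_G)$ into $B_1$ (when $i=1$); rewriting $\widehat{k}_L\alpha$ in the canonical normalizer form $\tau^i\widehat{b}\widetilde{\ell}\bar{m}$ and reading off $\ell=1_G$ from the fact that $\widehat{b}$ and $\bar{m}$ already fix $(0,1_G)$, I land in $\Iso(G)$. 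Since $\widehat{k}_L$ fixes $X'$, the element $(\widehat{k}_L\alpha)t\in S$ still sends $X$ to $X'$, giving property (1), and property (2) is then read off by tracking the base point through this same normalization.

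The main obstacle is this base-point normalization in the second implication: the factor $\widetilde{G}$ in the normalizer must be cleared and the presence or absence of the bipartition-swapping factor $\tau^i$ must be recorded simultaneously, so that the resulting $\Iso(G)$-element behaves correctly on $(0,1_G)$. This is exactly the bookkeeping that condition (2) of Definition~\ref{solving set def} is designed to control, and making it precise—rather than the underlying algebra, which is packaged in Lemmas~\ref{normalizer} and~\ref{biCayley Iso}—is the crux of the argument.
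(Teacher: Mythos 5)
Your proof is correct and takes essentially the same approach as the paper: the $(1)\Rightarrow(2)$ direction is the identical conjugation computation $v^{-1}\phi^{-1}\widehat{G}_L\phi v = s^{-1}\widehat{G}_Ls = t^{-1}\widehat{G}_Lt$, and your $(2)\Rightarrow(1)$ direction---writing $\phi vt^{-1}=\tau^i\widehat{a}\widetilde{g}\bar{h}$ via Lemma~\ref{normalizer}, correcting the base point by a suitable $\widehat{k}_L\in\widehat{G}_L\le\Aut(X')$, and rewriting as $\tau^i\widehat{b}\widetilde{\ell}\bar{m}$ with $\ell=1_G$ so as to land in $\Iso(G)$---is precisely the paper's argument, down to your closing remark on the role of condition (2) of Definition~\ref{solving set def}. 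The only soft spot, namely that property (2) of the solving-set definition for $S$ is handled by hand-waving (``tracking the base point''), is shared with, and in fact treated no more explicitly in, the paper's own proof.
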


\begin{proof}
(1) implies (2).   Let $\phi\in S_{\Z_2\times G}$ such that
$\phi^{-1}\widehat{G}_L\phi\le\Aut(X)$.  Then $\phi(X)$ is a Haar
object of $G$ in ${\cal K}$  as $\Aut(\phi(X)) = \phi\Aut(X)\phi^{-1}\ge \widehat{G}_L$.  As $S$ is a solving set for $X$, $\phi(X) = s(X)$ for some $s\in S$, and $s = \alpha t$ for some $\alpha\in\Iso(G)$ and $t\in T$.  Thus $v =
\phi^{-1}s\in\Aut(X)$. Then
\begin{eqnarray*}
v^{-1}\phi^{-1}\widehat{G}_L\phi v & = & s^{-1}\phi \phi^{-1}\widehat{G}_L\phi\phi^{-1}s\\
  & = & s^{-1}\widehat{G}_Ls = t^{-1}\alpha^{-1}\widehat{G}_L\alpha t\\
  & = & t^{-1}\widehat{G}_Lt
\end{eqnarray*}
\noindent and $T$ is an $\ABCI(X,G)$.  

(2) implies (1).  Let $X$ and $X'$ be isomorphic Haar objects of $G$ in ${\cal K}$.  Then there exists $\phi\in S_{\Z_2\times G}$ such that $\phi(X) = X'$.  As $\widehat{G}_L\le\Aut(X')$, $\phi^{-1}\widehat{G}_L\phi\le\Aut(X)$. As $T$ is an $\ABCI(G,X)$, there exists $t\in T$ and $v\in\Aut(X)$ such that $v^{-1}\phi^{-1}\widehat{G}_L\phi v = t^{-1}\widehat{G}_Lt$.  Hence $tv^{-1}\phi^{-1}\widehat{G}_L\phi v t^{-1} = \widehat{G}_L$. Then $\phi vt^{-1}$ normalizes $\widehat{G}_L$, and so by Lemma \ref{normalizer}, we have $\phi vt^{-1} = \tau^i\widehat{a}\widetilde{g}\bar{h}$ for $i\in\Z_2$, $a\in \Aut(G)$, and $g,h\in G$.  Set $A = \Aut(G)$, and $\gamma = \tau^i\widehat{a}\widetilde{g}\bar{h}$.  Then $\phi v = \gamma t$ and $\phi v(X) = \phi(X) = X'$.  Recall that $\widehat{A}\cdot(\widetilde{G}\bar{G})$ contains $\widehat{G}_L$. As $\widehat{a}\widetilde{g}(B_0) = B_0$, there exists $k\in G$ such that $\widehat{k}_L\widehat{a}\widetilde{g}(0,1_G) = (0,1_G)$.  As $\widehat{k}_L\tau^i = \tau^i\widehat{k}_L$, we see that $\widehat{k}_L\gamma = \widehat{k}_L\tau^i\widehat{a}\widetilde{g}\bar{h} = \tau^i\widehat{k}_L\widehat{a}\widetilde{g}\bar{h}$ and $\widehat{k}_L\widehat{a}\widetilde{g}\bar{h}(0,1_G) = \widehat{k}_L\widehat{a}\widetilde{g}(0,1_G) = (0,1_G)$.  By Lemma \ref{normalizer}, we may rewrite $\tau^i\widehat{k}_L\widehat{a}\widetilde{g}\bar{h}$ as an element $\tau^i\widehat{b}\widetilde{\ell}\bar{m}$, where $b\in A$ and $\ell,m\in G$.  We see that $\widehat{b}\widetilde{\ell}\bar{m}(0,1_G) = (0,1_G)$, and as $\widehat{b}$ and $\bar{m}$ both fix $(0,1_G)$, we deduce that $\ell = 1$.  Thus the inclusion of the condition that if $s\in S$ and $s(0,1_G)\in B_0$ then $s(0,1_G) = (0,1_G)$ in Definition \ref{solving set def} allows us to eliminate any map of the form $\widetilde{\ell}$ from the maps to be considered for isomorphism.  This is the reason for the inclusion of (2) in Definition \ref{solving set def}.  Also, $\tau^i\widehat{b}\bar{m}\in\Iso(G)$ and maps $(0,1_G)$ to $(0,1_G)$ if $i = 0$ or, if $i = 1$, maps $B_0$ to $B_1$.  Finally, $\widehat{k}_L\gamma t(X) = \widehat{k}_L(X') = X'$ and $\widehat{k}_L\gamma = \tau^i\widehat{b}\bar{m}\in\Iso(G)$.  The result follows with $\alpha = \widehat{k}_L\gamma$.  
\end{proof}

In Definition \ref{ABCI def} of an $\ABCI(G,X)$, we can now see that Condition (1) ensures that the solving set contains $\Iso(G)$.  Condition (2) of Definition \ref{ABCI def} ensures that Condition (2) of Lemma \ref{contool1} holds.  So conditions (1) and (2) are in some sense ``technical'' conditions.  Condition (3) of Definition \ref{ABCI def} can be thought of as an analogue of Lemma \ref{Ciso} (2).  An analogue of the next result exists for the BCI problem (see \cite[Corollary 2.2]{KoikeK2014} and \cite[Lemma 2.2]{KoikeK2019}). 

\begin{corollary}
Let $G$ be a group, and $X$ a Haar object in a class ${\cal K}$ of combinatorial objects.  The following are equivalent:
\begin{enumerate}
\item $X$ is an $\ABCI$-object of $G$,
\item whenever $\phi\in S_{\Z_2\times G}$ such that $\phi^{-1}\widehat{G}_L\phi\le\Aut(X)$, then $\phi^{-1}\widehat{G}_L\phi$ and $\widehat{G}_L$ are conjugate in $\Aut(X)$.
\end{enumerate}
\end{corollary}

\begin{proof}
Observe that $X$ is an $\ABCI$-object of $G$ if and only if $\ABCI(G,X) = \{1\}$.  The result follows by Lemma \ref{contool1}.
\end{proof}

The following result shows that if a solving set for $X$ has been found, then some $\ABCI(G,X)$ has also been found.

\begin{lemma}\label{CIX}
Let $G$ be a group, $X$ a Haar object of $G$, and $S$ a solving set for $X$.  Define an equivalence relation $\equiv$ on $S$ by $s_1\equiv s_2$ if and only if $s_1 = \nu s_2$ for some $\nu\in \Iso(G)$.  Let $T$ be a set consisting of one representative of each equivalence class of $\equiv$, with $1$ chosen from the equivalence class $\Iso(G)$.  Then $T$ is an $\ABCI(G,X)$.
\end{lemma}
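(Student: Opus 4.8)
The plan is to verify directly that $T$ satisfies the three defining properties of an $\ABCI(G,X)$ in Definition \ref{ABCI def}. Since $T$ consists of representatives drawn from $S$, we have $T\subseteq S$, and the two ``technical'' conditions are then immediate: condition (1) holds because $1\in\Iso(G)\subseteq S$ and $1$ is chosen as the representative of its class, and condition (2) is inherited directly from condition (2) of the solving-set Definition \ref{solving set def}, as every $t\in T$ also lies in $S$. So the real content is condition (3).

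For condition (3) I would begin from an arbitrary $\phi\in S_{\Z_2\times G}$ with $\phi^{-1}\widehat{G}_L\phi\le\Aut(X)$. The first step is to note that $\phi(X)$ is again a Haar object of $G$ isomorphic to $X$, since $\Aut(\phi(X))=\phi\Aut(X)\phi^{-1}\ge\phi(\phi^{-1}\widehat{G}_L\phi)\phi^{-1}=\widehat{G}_L$. Applying condition (1) of the solving-set definition to $X'=\phi(X)$ produces $s\in S$ with $s(X)=\phi(X)$, whence $v:=\phi^{-1}s\in\Aut(X)$. Let $t\in T$ be the representative of the $\equiv$-class of $s$, so that $s=\nu t$ for some $\nu\in\Iso(G)$. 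The decisive point is that $\Iso(G)\subseteq N_{S_{\Z_2\times G}}(\widehat{G}_L)$ by Lemma \ref{normalizer} (each element $\tau^i\widehat{\alpha}\bar{g}$ normalizes $\widehat{G}_L$), so $\nu$ normalizes $\widehat{G}_L$. The computation
\begin{eqnarray*}
v^{-1}\phi^{-1}\widehat{G}_L\phi v &=& s^{-1}\phi\cdot\phi^{-1}\widehat{G}_L\phi\cdot\phi^{-1}s\\
&=& s^{-1}\widehat{G}_Ls = t^{-1}\nu^{-1}\widehat{G}_L\nu t = t^{-1}\widehat{G}_Lt
\end{eqnarray*}
then delivers condition (3) with this $t$, completing the verification.

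The one place requiring care is the bookkeeping around the relation $\equiv$. Here $\equiv$ is defined through $\Iso(G)$, which is a subset of the normalizer $N_{S_{\Z_2\times G}}(\widehat{G}_L)$ but is \emph{not} a subgroup of it (for instance $\tau\bar{g}\cdot\tau\bar{h}=\widetilde{g}\bar{h}$ carries a $\widetilde{G}$-factor). Consequently I would want to confirm that $\equiv$ really does organize $S$ into classes with well-defined representatives, or, what the proof actually needs, that each $s\in S$ can be written as $\nu t$ with $\nu\in\Iso(G)$ and $t$ its chosen representative, and that the class from which $1$ is selected genuinely contains $1$ (which is fine, since $1\in\Iso(G)\subseteq S$). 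This is the only potential obstacle, and it is administrative rather than mathematical: the engine of the proof is the three-line conjugation computation above, which uses nothing about $\nu$ beyond the fact that it normalizes $\widehat{G}_L$. Because that holds for every element of $\Iso(G)$, the argument goes through as soon as the representatives are fixed.
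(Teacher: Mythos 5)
Your proof is correct and is essentially the paper's own argument: conditions (1) and (2) of Definition \ref{ABCI def} are dispatched exactly as you do (via $1\in\Iso(G)$ and $T\subseteq S$), and condition (3) comes from the same solving-set-plus-conjugation computation $v^{-1}\phi^{-1}\widehat{G}_L\phi v = s^{-1}\widehat{G}_Ls = t^{-1}\nu^{-1}\widehat{G}_L\nu t = t^{-1}\widehat{G}_Lt$ with $v = \phi^{-1}s$. Your opening step, that an arbitrary $\phi$ with $\phi^{-1}\widehat{G}_L\phi\le\Aut(X)$ sends $X$ to a Haar object so that the solving set applies, is in fact slightly more careful than the paper, which starts directly from an isomorphism onto a Haar object; the two formulations are equivalent.

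One refinement on the issue you flag: it is sharper than ``administrative,'' and the paper glosses over it. Since $(\tau\widehat{\alpha}\bar{g})^{-1} = \tau\widehat{\alpha^{-1}}\widetilde{\alpha(g)^{-1}}$, the set $\Iso(G)$ is not inverse-closed, so $\equiv$ is not even symmetric: $\tau\bar{g}\equiv 1$ always, but $1\equiv\tau\bar{g}$ would force $(\tau\bar{g})^{-1}=\tau\widetilde{g^{-1}}\in\Iso(G)$, which fails for $g\ne 1_G$ by the uniqueness of the factorization in Lemma \ref{normalizer}. Thus the paper's assertion that ``it is straightforward to verify that $\equiv$ is an equivalence relation'' is not literally correct, and its step ``$t\equiv s$ so that $\nu t = s$'' silently invokes the missing symmetry; your requirement that $s=\nu t$ with $\nu\in\Iso(G)$ inherits the same problem. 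The repair is exactly what your closing remark suggests: the displayed computation uses nothing about $\nu$ except that it normalizes $\widehat{G}_L$, and the group generated by $\Iso(G)$ is the full normalizer $N_{S_{\Z_2\times G}}(\widehat{G}_L)$ (it contains $\tau$, $\widehat{A}$, $\bar{G}$, hence also $\widetilde{G}=\tau\bar{G}\tau$). So if one replaces $\equiv$ by the equivalence relation it generates---equivalently, declares $s_1\equiv s_2$ when $s_1=\nu s_2$ for some $\nu\in N_{S_{\Z_2\times G}}(\widehat{G}_L)$, as an earlier formulation of this lemma did---the classes genuinely partition $S$, the representative $t$ of the class of $s$ satisfies $s=\nu t$ with $\nu$ in the normalizer, and your conjugation argument runs verbatim. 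With that reading, both your proof and the paper's are complete.
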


\begin{proof} 
It is straightforward to verify that $\equiv$ is an equivalence relation and that $\Iso(G,X)$ is an equivalence class of $\equiv$.  By construction, $T$ contains the identity in $S_{\Z_2\times G}$, so Definition \ref{ABCI def} (1) holds.  Definition \ref{ABCI def} (2) holds as $T\subseteq S$ and $S$ is a solving set.  For Definition \ref{ABCI def} (3), choose $T$ as is given in the statement.   Let $X'$ be a Haar object of $G$ isomorphic to $X$ with $\phi:X\to X'$ an isomorphism.  Then $\phi^{-1}\widehat{G}_L\phi\le\Aut(X)$.  Also, as $S$ is a solving set for $X$, there exists $s\in S$ such that $s(X) = X'$ so that $v = \phi^{-1}s\in\Aut(X)$.  Let $t\in T$ such that $t\equiv s$ so that $\nu t = s$ for some $\nu\in \Iso(G)$.  Then 
$$v^{-1}\phi^{-1}\widehat{G}_{L}\phi v = s^{-1}\widehat{G}_{L} s = t^{-1}\nu^{-1}\widehat{G}_{L}\nu t = t^{-1}\widehat{G}_{L} t.$$
Thus $T$ is an $\ABCI(G,X)$.
\end{proof}

We now more or less repeat the above results in the more general context of solving sets for classes of combinatorial objects.  The proofs are basically the same as the proofs for a solving set for a Haar object of $G$ as above, and so are not repeated.  

\begin{definition}
Let $G$ be a finite group.  We say that $S\subseteq S_{\Z_2\times G}$ is a {\bf solving set for a class ${\cal
K}$ of Haar objects of $G$} if the following hold:
\begin{enumerate}
\item whenever $X,X'\in {\cal K}$ are Haar objects of $G$ and $X\cong X'$, then there exists $s\in S$ such that $s(X) = X'$, 
\item if $s\in S$ and $s(0,1_G)\in B_0$, then $s(0,1_{G}) = (0,1_{G})$, and 
\item $\Iso(G)\subseteq S$.
\end{enumerate}
\end{definition}

\begin{definition}
Let $G$ be a finite group, and ${\cal K}$ be a class of combinatorial objects.  We define an {\bf ABCI-extension of $G$ with respect to ${\cal K}$}, denoted by $\ABCI(G,{\cal K})$, to be a set of permutations in $S_{\Z_2\times G}$ that satisfies the following properties:
\begin{enumerate}
    \item $\ABCI(G,{\cal K})$ contains the identity permutation in $S_{\Z_2\times G}$,
    \item if $t\in\ABCI(G,{\cal K})$ and $t(0,1_G)\in B_0$, then $t(0,1_G) = (0,1_G)$, and 
    \item whenever $X\in{\cal K}$ is a Haar object of $G$ and $\phi\in S_{\Z_2\times G}$ such that $\phi^{-1}\widehat{G}_L\phi\le\Aut(X)$, then there exists $t\in \ABCI(G,{\cal K})$ and $v\in\Aut(X)$ such that $v^{-1}\phi^{-1}\widehat{G}_L\phi v = t^{-1}\widehat{G}_Lt$.
\end{enumerate}
\end{definition}

\begin{lemma}\label{contool2}
Let $G$ be a finite group, and ${\cal K}$ a class of combinatorial objects.  Let $T\subseteq S_{\Z_2\times G}$ that contains the identity and if $t\in T$ such that $t(B_0) = B_0$, then $t(0,1_G) = (0,1_G)$.  The following are equivalent:
\begin{enumerate}
\item $S = \{\alpha t:\alpha\in\Iso(G), t\in T\}$ is a solving set for $G$ in ${\cal K}$,
\item $T$ is an $\ABCI(G,{\cal K})$.
\end{enumerate}
\end{lemma}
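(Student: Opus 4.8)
The plan is to transcribe the proof of Lemma \ref{contool1} almost verbatim, replacing the single object $X$ by an arbitrary Haar object of the class ${\cal K}$. The only genuinely new point is that the hypothesis $\phi^{-1}\widehat{G}_L\phi\le\Aut(X)$ already forces the image $\phi(X)$ to be a Haar object of $G$ lying in ${\cal K}$, since $\Aut(\phi(X))=\phi\Aut(X)\phi^{-1}\ge\widehat{G}_L$; this is what lets the class-version of the solving set apply to it. Once this is noted, quantifying over all $X\in{\cal K}$ introduces no extra difficulty.

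For the direction $(1)\Rightarrow(2)$, I would fix any $\phi\in S_{\Z_2\times G}$ with $\phi^{-1}\widehat{G}_L\phi\le\Aut(X)$ for some $X\in{\cal K}$. As just observed, $\phi(X)\in{\cal K}$ is a Haar object isomorphic to $X$, so by the solving-set property there is $s=\alpha t$ with $\alpha\in\Iso(G)$, $t\in T$, and $s(X)=\phi(X)$. Then $v=\phi^{-1}s\in\Aut(X)$, and since $\alpha\in\Iso(G)\subseteq N_{S_{\Z_2\times G}}(\widehat{G}_L)$ by Lemma \ref{normalizer}, the conjugate $v^{-1}\phi^{-1}\widehat{G}_L\phi v=s^{-1}\widehat{G}_Ls=t^{-1}\alpha^{-1}\widehat{G}_L\alpha t$ collapses to $t^{-1}\widehat{G}_Lt$, which is Condition (3) of an $\ABCI(G,{\cal K})$. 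Conditions (1) and (2) are inherited directly from $T$ and the hypotheses placed on $T$.

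For $(2)\Rightarrow(1)$, I would take isomorphic Haar objects $X,X'\in{\cal K}$ and an isomorphism $\phi$ with $\phi(X)=X'$, so that $\phi^{-1}\widehat{G}_L\phi\le\Aut(X)$. The $\ABCI(G,{\cal K})$ property yields $t\in T$ and $v\in\Aut(X)$ with $v^{-1}\phi^{-1}\widehat{G}_L\phi v=t^{-1}\widehat{G}_Lt$, so $\phi vt^{-1}$ normalizes $\widehat{G}_L$ and, by Lemma \ref{normalizer}, may be written $\gamma:=\tau^i\widehat{a}\widetilde{g}\bar{h}$. Then $\gamma t=\phi v$ and $\gamma t(X)=\phi(X)=X'$. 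The remaining work is the same normalization trick as in Lemma \ref{contool1}: choose $k\in G$ with $\widehat{k}_L\widehat{a}\widetilde{g}(0,1_G)=(0,1_G)$, commute $\widehat{k}_L$ past $\tau^i$, and re-expand $\tau^i\widehat{k}_L\widehat{a}\widetilde{g}\bar{h}$ via Lemma \ref{normalizer} as $\tau^i\widehat{b}\widetilde{\ell}\bar{m}$ fixing $(0,1_G)$. Setting $\alpha=\widehat{k}_L\gamma=\tau^i\widehat{b}\bar{m}$ gives $s=\alpha t\in S$ with $s(X)=\widehat{k}_L(X')=X'$, establishing Condition (1) of the solving set; Conditions (2) and (3) hold by the construction of $S$ from $T$ and $\Iso(G)$.

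The one step that requires care is the elimination of the $\widetilde{\ell}$ factor in the second direction, exactly as flagged after Lemma \ref{contool1}: since $\widehat{b}$ and $\bar{m}$ both fix $(0,1_G)$ while $\widehat{b}\widetilde{\ell}\bar{m}$ also fixes $(0,1_G)$, one forces $\ell=1$, landing $\alpha$ inside $\Iso(G)$. It is precisely Condition (2) of the solving-set definition that makes discarding this $\widetilde{\ell}$ component legitimate. Everything else is a mechanical copy of the single-object argument, which is why the paper states the proof is not repeated.
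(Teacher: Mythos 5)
Your proposal is correct and is essentially the paper's own proof: the paper explicitly omits the proof of Lemma \ref{contool2}, stating it is ``basically the same'' as that of Lemma \ref{contool1}, and your transcription — including the observation that $\Aut(\phi(X)) = \phi\Aut(X)\phi^{-1} \ge \widehat{G}_L$ puts $\phi(X)$ in ${\cal K}$, and the elimination of the $\widetilde{\ell}$ factor via Condition (2) of the solving-set definition — is exactly that adaptation.
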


\begin{corollary}\label{contool3}
Let $G$ be a group, and ${\cal K}$ a class of combinatorial objects.  The following are equivalent:
\begin{enumerate}
\item $G$ is an $\ABCI$-group with respect to ${\cal K}$,
\item whenever $X$ is a Haar object of $X$ in ${\cal K}$ and $\phi\in S_{\Z_2\times G}$ such that $\phi^{-1}\widehat{G}_L\phi\le\Aut(X)$, then $\phi^{-1}\widehat{G}_L\phi$ and $\widehat{G}_L$ are conjugate in $\Aut(X)$.
\end{enumerate}
\end{corollary}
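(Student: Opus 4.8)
The plan is to mirror the one-line argument that established the corollary following Lemma~\ref{contool1}, but using the class-level machinery of Lemma~\ref{contool2} in place of the single-object Lemma~\ref{contool1}. The pivotal observation I would record first is that $G$ is an $\ABCI$-group with respect to ${\cal K}$ exactly when the singleton $\{1\}$ may be taken as an $\ABCI(G,{\cal K})$; once this is in hand, Lemma~\ref{contool2} does essentially all the remaining work.

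First I would unwind the definition of an $\ABCI$-group with respect to ${\cal K}$: it says that any two isomorphic Haar objects of $G$ in ${\cal K}$ are already isomorphic by some element of $\Iso(G)$. I would reformulate this as the assertion that $\Iso(G)$ is itself a solving set for ${\cal K}$, which requires checking the two side conditions in the definition of a solving set beyond condition~(1). Condition~(3), namely $\Iso(G)\subseteq\Iso(G)$, is immediate. For condition~(2), I would use that every element of $\Iso(G)$ has the form $\tau^i\widehat{\alpha}\bar{g}$: when $i=0$ it fixes $(0,1_G)$, since by Lemma~\ref{normalizer} both $\bar{g}$ and $\widehat{\alpha}$ fix $(0,1_G)$, and when $i=1$ it sends $(0,1_G)$ into $B_1$, so the implication ``$s(0,1_G)\in B_0\Rightarrow s(0,1_G)=(0,1_G)$'' holds vacuously. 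Thus $G$ is an $\ABCI$-group with respect to ${\cal K}$ if and only if $\Iso(G)$ is a solving set for ${\cal K}$. Applying Lemma~\ref{contool2} with $T=\{1\}$, which trivially contains the identity and meets the $B_0$-fixing hypothesis, gives that $\{1\}$ is an $\ABCI(G,{\cal K})$ if and only if $\{\alpha\cdot 1:\alpha\in\Iso(G)\}=\Iso(G)$ is a solving set for ${\cal K}$. Chaining these equivalences yields that $G$ is an $\ABCI$-group with respect to ${\cal K}$ if and only if $\{1\}$ is an $\ABCI(G,{\cal K})$.

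It then remains to identify ``$\{1\}$ is an $\ABCI(G,{\cal K})$'' with condition~(2) of the corollary. Conditions~(1) and~(2) in the definition of an $\ABCI(G,{\cal K})$ hold automatically for the singleton, while condition~(3), whose only available choice is $t=1$ and hence $t^{-1}\widehat{G}_Lt=\widehat{G}_L$, reduces to: for every Haar object $X\in{\cal K}$ and every $\phi$ with $\phi^{-1}\widehat{G}_L\phi\le\Aut(X)$ there is $v\in\Aut(X)$ with $v^{-1}\phi^{-1}\widehat{G}_L\phi v=\widehat{G}_L$, that is, $\phi^{-1}\widehat{G}_L\phi$ and $\widehat{G}_L$ are conjugate in $\Aut(X)$. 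This is precisely condition~(2), so the equivalence is complete. The only step carrying any real content is the reformulation in the middle paragraph---verifying that $\Iso(G)$ satisfies the side conditions of a solving set and that $T=\{1\}$ satisfies the hypotheses of Lemma~\ref{contool2}---since every other implication is formal bookkeeping already packaged into Lemma~\ref{contool2}. Accordingly I expect no genuine obstacle here; the work is entirely in confirming that the definitions line up, exactly as in the analogous corollary for a fixed object $X$.
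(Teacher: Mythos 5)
Your proposal is correct and follows essentially the same route as the paper: the paper proves the object-level analogue by observing that being an $\ABCI$-object is equivalent to $\{1\}$ being an $\ABCI(G,X)$ and then invoking Lemma \ref{contool1}, and explicitly states that the class-level results (including this corollary) are proved in the same way via Lemma \ref{contool2}, which is exactly your argument. Your additional verifications (that $\Iso(G)$ satisfies the side conditions of a solving set and that $T=\{1\}$ meets the hypotheses of Lemma \ref{contool2}) are precisely the bookkeeping the paper leaves implicit.
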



\begin{lemma}\label{CIK}
Let $G$ be a group, ${\cal K}$ a class of combinatorial objects, and $S$ a solving set for $\widehat{G}_L$ in ${\cal K}$.  Define an equivalence relation $\equiv$ on $S$ by $s_1\equiv s_2$ if and only if $s_1 = \nu s_2$ for some $\nu\in \Iso(G)$.  Let $T$ be a set consisting of one representative of each equivalence class of $\equiv$, with $1$ chosen from the equivalence class $\Iso(G)$.  Then $T$ is an $\ABCI(G,X)$.
\end{lemma}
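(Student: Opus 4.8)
The plan is to reuse the argument of Lemma~\ref{CIX} almost verbatim, now letting the object to which the solving set is applied vary over the class ${\cal K}$ rather than being a single fixed $X$, and to verify the three defining conditions of an $\ABCI(G,{\cal K})$ for the chosen transversal $T$. As in Lemma~\ref{CIX}, I would first record that $\equiv$ is an equivalence relation on $S$ and that $\Iso(G)$ is one of its classes (using $1\in\Iso(G)\subseteq N_{S_{\Z_2\times G}}(\widehat{G}_L)$, the containment coming from Lemma~\ref{normalizer}); the verification is identical to the one already carried out in Lemma~\ref{CIX}. Condition (1) then holds because $1$ is the chosen representative of the class $\Iso(G)$, and condition (2) holds because $T\subseteq S$ and $S$, being a solving set, already satisfies the required condition on the image of $(0,1_G)$.

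The content is condition (3). Given a Haar object $X\in{\cal K}$ and $\phi\in S_{\Z_2\times G}$ with $\phi^{-1}\widehat{G}_L\phi\le\Aut(X)$, I would set $X'=\phi(X)$. Since $\Aut(X')=\phi\Aut(X)\phi^{-1}\ge\widehat{G}_L$, the object $X'$ is again a Haar object of $G$, and because ${\cal K}$ is closed under isomorphism we have $X'\in{\cal K}$ with $X\cong X'$. The solving-set hypothesis now applies to the pair $X,X'\in{\cal K}$ and produces $s\in S$ with $s(X)=X'=\phi(X)$; consequently $v:=\phi^{-1}s$ fixes $X$ and lies in $\Aut(X)$.

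To conclude, let $t\in T$ be the representative of the $\equiv$-class of $s$, so that $s=\nu t$ for some $\nu\in\Iso(G)$. Because $\nu$ normalizes $\widehat{G}_L$, the same one-line computation as in Lemma~\ref{CIX} gives
\[
v^{-1}\phi^{-1}\widehat{G}_L\phi v=s^{-1}\widehat{G}_L s=t^{-1}\nu^{-1}\widehat{G}_L\nu t=t^{-1}\widehat{G}_L t ,
\]
which is precisely condition (3). Hence $T$ is an $\ABCI(G,{\cal K})$.

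I expect the only genuinely substantive points — and thus the main, though mild, obstacle — to be the two closure observations packaged into the middle step: that $X'=\phi(X)$ is again a Haar object lying in ${\cal K}$, so that the solving-set property of $S$ is actually applicable to $(X,X')$, and that $\Iso(G)$ really constitutes a single $\equiv$-class so that the representative $t$ is well defined and related to $s$ through an element normalizing $\widehat{G}_L$. Once these are secured, everything reduces to the identical conjugation identity already established in Lemma~\ref{CIX}, which is why the proof need not be written out in full.
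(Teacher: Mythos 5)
Your proposal is correct and takes essentially the same approach as the paper: the paper explicitly declines to repeat the proof of Lemma \ref{CIK}, stating it is obtained by rerunning the proof of Lemma \ref{CIX} in the class setting, and that is precisely what you do — the same transversal construction, the same verification of conditions (1) and (2), and the same conjugation identity $v^{-1}\phi^{-1}\widehat{G}_L\phi v = s^{-1}\widehat{G}_L s = t^{-1}\widehat{G}_L t$ with $v = \phi^{-1}s$. Your extra observation that $X' = \phi(X)$ is again a Haar object of $G$ lying in ${\cal K}$ (so the class-level solving set applies) is the same implicit closure assumption the paper itself uses in Lemma \ref{contool1}, so nothing is missing relative to the paper's argument.
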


We now become even more general, and consider the isomorphism problem for Haar objects of a group $G$.  Again, the proofs are basically the ones we have already seen and so we do not repeat them.

\begin{definition}
Let $G$ be a finite group.  A set $S$ is a {\bf solving set for $\widehat{G}_L$} if the following hold:
\begin{enumerate}
\item whenever $X,X'$ are isomorphic Haar objects of $G$ in any class ${\cal K}$ of combinatorial objects, then there exists $s\in S$ such that $s(X) = X'$, 
\item if $s\in S$ and $s(0,1_G)\in B_0$, then $s(0,1_{G}) = (0,1_{G})$, and 
\item $\Iso(G)\subseteq S$.
\end{enumerate}
\end{definition}

\begin{definition}
Let $G$ be a finite group.  We define an {\bf ABCI-extension of $G$}, denoted by $\ABCI(G)$, to be a set of permutations in $S_{\Z_2\times G}$ that satisfies the following properties:
\begin{enumerate}
    \item $\ABCI(G)$ contains the identity permutation in $S_{\Z_2\times G}$,
    \item if $t\in\ABCI(G)$ and $t(0,1_G)\in B_0$, then $t(0,1_G) = (0,1_G)$, and  
    \item whenever $X\in{\cal K}$ is a Haar object of $G$ in some class ${\cal K}$ of combinatorial objects, and $\phi\in S_{\Z_2\times G}$ such that $\phi^{-1}\widehat{G}_L\phi\le\Aut(X)$, then there exists $t\in \ABCI(G)$ and $v\in\la \widehat{G}_L,\phi^{-1}\widehat{G}_L\phi\ra$ such that
$v^{-1}\phi^{-1}\widehat{G}_L\phi v = t^{-1}\widehat{G}_Lt$.
\end{enumerate}   
\end{definition}

Repeated application of Lemma \ref{existence} for every combinatorial object $X$ in every class ${\cal K}$ of combinatorial objects shows that $\ABCI(G)$ exists.  The proofs of the following results are also straightforward.

\begin{lemma}
Let $G$ be a finite group.  Let $T\subseteq S_{\Z_2\times G}$ that contains the identity and if $t\in T$ such that $t(B_0) = B_0$, then $t(0,1_G) = (0,1_G)$.  The following are equivalent:
\begin{enumerate}
\item $S = \{\alpha t:\alpha\in\Iso(G), t\in T\}$ is a solving set for $\widehat{G}_L$,
\item $T$ is an $\ABCI(G)$.
\end{enumerate}
\end{lemma}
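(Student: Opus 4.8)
The plan is to mirror the proof of Lemma~\ref{contool1}, the only genuinely new ingredient being the construction, for a given $\phi$, of a combinatorial object whose full automorphism group is exactly $\la\widehat{G}_L,\phi^{-1}\widehat{G}_L\phi\ra$. This is what will force the conjugating element $v$ to lie in the smaller group $\la\widehat{G}_L,\phi^{-1}\widehat{G}_L\phi\ra$ demanded by the definition of an $\ABCI(G)$, rather than merely in some $\Aut(X)$, which is the sole point distinguishing this statement from Lemma~\ref{contool1} and Lemma~\ref{contool2}.

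For $(2)\Rightarrow(1)$, I would argue essentially verbatim as in the $(2)\Rightarrow(1)$ half of Lemma~\ref{contool1}. Condition~(3) of being a solving set for $\widehat{G}_L$ holds because $1\in T$ gives $\Iso(G)=\Iso(G)\cdot 1\subseteq S$; condition~(2) is the same routine bipartition bookkeeping, using the description of $\Iso(G)$ from Lemma~\ref{biCayley Iso} together with the hypothesis on $T$. For condition~(1), given isomorphic Haar objects $X\cong X'$ with $\phi(X)=X'$, one has $\phi^{-1}\widehat{G}_L\phi\le\Aut(X)$ since $\widehat{G}_L\le\Aut(X')$; applying the defining property~(3) of an $\ABCI(G)$ yields $t\in T$ and $v\in\la\widehat{G}_L,\phi^{-1}\widehat{G}_L\phi\ra\le\Aut(X)$ with $v^{-1}\phi^{-1}\widehat{G}_L\phi v=t^{-1}\widehat{G}_Lt$, so $\phi vt^{-1}$ normalizes $\widehat{G}_L$. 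Writing $\phi vt^{-1}=\gamma=\tau^i\widehat{a}\widetilde{g}\bar{h}$ via Lemma~\ref{normalizer} and clearing the $\widetilde{\ell}$ part exactly as in Lemma~\ref{contool1} produces $k\in G$ with $\widehat{k}_L\gamma\in\Iso(G)$, whence $s=(\widehat{k}_L\gamma)t\in S$ and $s(X)=\widehat{k}_L\phi v(X)=\widehat{k}_L(X')=X'$.

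For $(1)\Rightarrow(2)$, I would fix $\phi\in S_{\Z_2\times G}$ and set $H=\la\widehat{G}_L,\phi^{-1}\widehat{G}_L\phi\ra$. The crucial step is to invoke the existence of a combinatorial object $Y$ with $\Aut(Y)=H$, obtained from the canonical relational structure recording the $H$-orbits on tuples from $\Z_2\times G$, whose automorphism group is the $\vert\Z_2\times G\vert$-closure of $H$, namely $H$ itself. Since $\widehat{G}_L\le H$, the object $Y$ is a Haar object of $G$, and $\phi(Y)$ is one as well because $\widehat{G}_L=\phi(\phi^{-1}\widehat{G}_L\phi)\phi^{-1}\le\phi H\phi^{-1}=\Aut(\phi(Y))$. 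As $Y\cong\phi(Y)$ and $S$ is a solving set for $\widehat{G}_L$, there is $s\in S$ with $s(Y)=\phi(Y)$; hence $v=\phi^{-1}s\in\Aut(Y)=H=\la\widehat{G}_L,\phi^{-1}\widehat{G}_L\phi\ra$, which is precisely the restricted location required of $v$. Writing $s=\alpha t$ with $\alpha\in\Iso(G)$ and $t\in T$, and using that $\alpha$ normalizes $\widehat{G}_L$, gives
$$v^{-1}\phi^{-1}\widehat{G}_L\phi v=s^{-1}\widehat{G}_Ls=t^{-1}\alpha^{-1}\widehat{G}_L\alpha t=t^{-1}\widehat{G}_Lt,$$
establishing property~(3) of an $\ABCI(G)$; properties~(1) and~(2) are inherited directly from $T\subseteq S$ and the hypotheses on $T$.

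The main obstacle is exactly the realizability step in $(1)\Rightarrow(2)$: one must know that an arbitrary permutation group containing $\widehat{G}_L$ arises as the automorphism group of some object in the class of combinatorial objects over which a solving set for $\widehat{G}_L$ quantifies. This is the feature that separates $\ABCI(G)$, where $v$ is confined to $\la\widehat{G}_L,\phi^{-1}\widehat{G}_L\phi\ra$, from $\ABCI(G,X)$ and $\ABCI(G,{\cal K})$, where $v$ may range over all of $\Aut(X)$; it is handled by the standard $k$-closure construction and requires no graph-theoretic input, which is why the remainder of the argument can be imported unchanged from Lemma~\ref{contool1}.
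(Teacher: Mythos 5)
Your proof is correct, but it is worth saying how it relates to the paper, which never writes this proof out: the authors only remark that ``the proofs are basically the ones we have already seen,'' i.e.\ a rerun of Lemma \ref{contool1}. Your $(2)\Rightarrow(1)$ direction is exactly that rerun, using that $\la\widehat{G}_L,\phi^{-1}\widehat{G}_L\phi\ra\le\Aut(X)$ so the conjugating element is in particular an automorphism of $X$ and the Lemma \ref{contool1} computation goes through. The genuine content of your proposal is in $(1)\Rightarrow(2)$: you correctly observe that copying Lemma \ref{contool1} only places $v=\phi^{-1}s$ in $\Aut(X)$, while the definition of an $\ABCI(G)$ requires $v\in\la\widehat{G}_L,\phi^{-1}\widehat{G}_L\phi\ra$, and you repair this by applying the solving set not to $X$ but to the relational structure $Y$ whose relations are the orbits of $H=\la\widehat{G}_L,\phi^{-1}\widehat{G}_L\phi\ra$ on tuples, so that $\Aut(Y)=H$ (the closure of $H$ at arity equal to the degree is $H$ itself), forcing $v=\phi^{-1}s\in H$. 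This is the standard device from the CI-extension literature (it is how the analogous statement for $\CI$-extensions is handled in \cite{Dobson2014}, and it goes back to Babai's argument), and it is surely what the authors intend by ``straightforward''; it also explains why this definition, unlike those of $\ABCI(G,X)$ and $\ABCI(G,{\cal K})$, quantifies over all classes of combinatorial objects --- the auxiliary object $Y$ must be available in the class. So your route is the intended one, with the one necessary extra ingredient made explicit rather than elided. A shared wrinkle, not a defect of your argument relative to the paper's: your claim that condition (2) of the solving-set definition for $S=\{\alpha t:\alpha\in\Iso(G),t\in T\}$ is ``routine bipartition bookkeeping'' is not actually verified (when $t(0,1_G)\in B_1$ and the $\Iso(G)$-factor involves $\tau$, the composite can send $(0,1_G)$ to a vertex of $B_0$ other than $(0,1_G)$); the paper's own proof of Lemma \ref{contool1}, on which everything here is modeled, silently skips the same verification, so this is an imprecision in the definitions rather than a gap peculiar to your proof.
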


\begin{corollary}
Let $G$ be a group.  The following are equivalent:
\begin{enumerate}
\item $G$ is an $\ABCI$-group,
\item whenever $X$ is a Haar object of $G$ and $\phi\in S_{\Z_2\times G}$ such that $\phi^{-1}\widehat{G}_L\phi\le\Aut(X)$, then $\phi^{-1}\widehat{G}_L\phi$ and $\widehat{G}_L$ are conjugate in $\Aut(X)$.
\end{enumerate}
\end{corollary}

\begin{lemma}\label{CIG}
Let $G$ be a group, and $S$ a solving set for $\widehat{G}_L$.  Define an equivalence relation $\equiv$ on $S$ by $s_1\equiv s_2$ if and only if $s_1 = \nu s_2$ for some $\nu\in \Iso(G)$.  Let $T$ be a set consisting of one representative of each equivalence class of $\equiv$, with $1$ chosen from the equivalence class $\Iso(G)$.  Then $T$ is an $\ABCI(G,X)$.
\end{lemma}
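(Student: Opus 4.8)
The plan is to mirror the proof of Lemma~\ref{CIX}, checking the three defining conditions of an $\ABCI(G)$ for the representative set $T$, with only one genuinely new ingredient. As in Lemma~\ref{CIX}, one first verifies that $\equiv$ is an equivalence relation with $\Iso(G)$ as one of its classes. Condition~(1) of the definition of an $\ABCI(G)$ then holds because $1$ is chosen as the representative of the class $\Iso(G)$, and condition~(2) holds because $T\subseteq S$ and $S$ is a solving set for $\widehat{G}_L$, so every $t\in T$ already satisfies the base-point requirement. Throughout I write $H=\widehat{G}_L$ and, for a given $\phi$, $K=\phi^{-1}\widehat{G}_L\phi$.

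The only real work is condition~(3), and here the statement differs from Lemma~\ref{CIX} in a way that is the crux of the argument. Condition~(3) in the definition of an $\ABCI(G)$ demands that the conjugating element $v$ lie in $\Gamma:=\la\widehat{G}_L,\phi^{-1}\widehat{G}_L\phi\ra$, and not merely in $\Aut(X)$. Running the Lemma~\ref{CIX} computation verbatim only produces $v=\phi^{-1}s\in\Aut(X)$, which need not lie in $\Gamma$; this is the main obstacle. The key observation resolving it is that a solving set for $\widehat{G}_L$ is required to work for Haar objects in \emph{every} class of combinatorial objects, so I am free to apply it not to $X$ but to an object whose full automorphism group is exactly $\Gamma$.

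Concretely, I would realize $\Gamma$ as an automorphism group. Setting $n=2\vert G\vert=\vert\Z_2\times G\vert$, the relational structure $Y$ on $\Z_2\times G$ whose relations are the orbits of $\Gamma$ acting on ordered $n$-tuples is a combinatorial object with $\Aut(Y)=\Gamma$ (any permutation preserving all these orbits must send a fixed enumeration of $\Z_2\times G$ into its $\Gamma$-orbit, hence already lies in $\Gamma$). Because $H\le\Gamma=\Aut(Y)$, the object $Y$ is a Haar object of $G$; and since $K\le\Gamma$ gives $H=\phi K\phi^{-1}\le\phi\Gamma\phi^{-1}=\Aut(\phi(Y))$, the object $\phi(Y)$ is a Haar object of $G$ as well, with $Y\cong\phi(Y)$ via $\phi$. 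Applying the solving-set property to the isomorphic Haar objects $Y$ and $\phi(Y)$ yields $s\in S$ with $s(Y)=\phi(Y)$, whence $v:=\phi^{-1}s\in\Aut(Y)=\Gamma$, exactly as required.

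Finally I would close the loop with the same coset computation as in Lemma~\ref{CIX}. Writing $s=\nu t$ with $\nu\in\Iso(G)$ and $t\in T$, and using $\Iso(G)\subseteq N_{S_{\Z_2\times G}}(\widehat{G}_L)$ from Lemma~\ref{normalizer} so that $\nu$ normalizes $\widehat{G}_L$, one gets $v^{-1}\phi^{-1}\widehat{G}_L\phi v = s^{-1}\widehat{G}_L s = t^{-1}\nu^{-1}\widehat{G}_L\nu t = t^{-1}\widehat{G}_L t$. This establishes condition~(3) with $v\in\Gamma$, completing the verification that $T$ is an $\ABCI(G)$. The one subtlety to get right is the universal-object step of the previous paragraph, which upgrades $v\in\Aut(X)$ to $v\in\Gamma$; everything else is the bookkeeping already carried out for Lemma~\ref{CIX}.
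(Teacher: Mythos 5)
Your proof is correct, and it is in fact more careful than the paper's own treatment of this lemma. The paper gives no written proof of Lemma \ref{CIG}: it asserts that the proofs in this more general setting are ``basically the ones we have already seen,'' i.e.\ that the argument of Lemma \ref{CIX} carries over. Taken literally, that argument only produces a conjugating element $v=\phi^{-1}s$ lying in $\Aut(X)$, whereas the definition of an $\ABCI(G)$ --- unlike the definitions of $\ABCI(G,X)$ and $\ABCI(G,{\cal K})$ --- requires $v\in\la\widehat{G}_L,\phi^{-1}\widehat{G}_L\phi\ra$; this is exactly the discrepancy you identify. Your universal-object step (the relational structure $Y$ whose relations are the orbits of $\la\widehat{G}_L,\phi^{-1}\widehat{G}_L\phi\ra$ on $\vert\Z_2\times G\vert$-tuples, so that $\Aut(Y)$ is precisely this group, followed by applying the solving set to the isomorphic Haar objects $Y$ and $\phi(Y)$) is the standard device for upgrading $v$, and is the same idea used for the corresponding $\CI(G)$ result in \cite{Dobson2014}, on which this section is explicitly modeled; it also explains why a solving set for $\widehat{G}_L$ is required to work in \emph{every} class of combinatorial objects, since that universality is what entitles you to apply it to $Y$ rather than to the given $X$. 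The remaining bookkeeping --- conditions (1) and (2), and the computation $v^{-1}\phi^{-1}\widehat{G}_L\phi v=s^{-1}\widehat{G}_Ls=t^{-1}\nu^{-1}\widehat{G}_L\nu t=t^{-1}\widehat{G}_Lt$, valid because every element of $\Iso(G)$ normalizes $\widehat{G}_L$ by Lemma \ref{normalizer} --- coincides with the proof of Lemma \ref{CIX}, as you say. Two minor remarks: the conclusion of the statement should read ``$T$ is an $\ABCI(G)$'' (the ``$\ABCI(G,X)$'' in the statement is a typo, which you silently and correctly repair); and the opening verification that $\equiv$ is an equivalence relation with $\Iso(G)$ as a class is inherited from the paper's setup, so any subtlety there (note that $\Iso(G)$ is a subset, but not a subgroup, of $N_{S_{\Z_2\times G}}(\widehat{G}_L)$) is the paper's burden as much as yours.
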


\section{Isomorphism criteria for Haar graphs}\label{three families}

We begin with the necessary definitions to understand the statement of \cite[Corollary 2.16]{Dobson2022}, which characterizes the automorphism groups of Haar graphs of abelian groups.

\begin{definition}
Let $\Gamma_1$ and $\Gamma_2$ be graphs. The \textbf{wreath product of $\Gamma_1$ and $\Gamma_2$}, denoted $\Gamma_1\wr\Gamma_2$,  is the graph with vertex set $V(\Gamma_1)\times V(\Gamma_2)$ and edge set
$$\{(u,v)(u,v'):u\in V(\Gamma_1){\rm\ and\ } vv'\in E(\Gamma_2)\}\cup\{(u,v)(u',v'): uu'\in E(\Gamma_1){\rm\ and\ }v,v'\in V(\Gamma_2)\}.$$
\end{definition}

\begin{definition}
Let $G\le S_X$ and $H\le S_Y$.  Define the {\bf wreath product of $G$ and $H$}, denoted $G\wr H$, to be the set of all permutations of $X\times Y$ of the form $(x,y)\mapsto (g(x),h_x(y))$, where $g\in G$ and each $h_x\in H$.
\end{definition}

\begin{definition}
A {\bf permutation representation} of a group $G$ is a homomorphism $\phi\colon G\to S_n$ for some $n$.
\end{definition}

If $G$ acts on a set $X$, then there is a homomorphism $\phi:G\to S_X$.  So an action of $G$ induces a permutation representation of $G$.

\begin{definition}
Let $G$ be a group, and $X$ and $Y$ sets.  Let $\beta\colon G\to S_X$ and $\delta\colon G\to S_Y$ be permutation representations of $G$.  We say $\beta$ and $\delta$ are {\bf equivalent permutation representations} of $G$ if there exists a bijection $\lambda\colon X\to Y$ such that $\lambda(\beta(g)(x)) = \delta(g)(\lambda(x))$ for all $x\in X$ and $g\in G$.  In this case, we will say that $\beta(G)$ and $\delta(G)$ are {\bf permutation equivalent}. 
\end{definition}

The next result \cite[Corollary 2.16]{Dobson2022} characterizes automorphism groups of Haar graphs of abelian groups, and will be one of the primary tools in the proof of the main results.   

\begin{theorem}\label{automorphism main result} 
Let $A$ be an abelian group, $S\subseteq A$, and $\Gamma = \Haar(A,S)$.  Then one of the following is true: 
\begin{enumerate}
\item\label{first} $\Gamma$ is disconnected, and there is $a\in A$ and $H < A$ such that $$\Gamma = \bar{a}^{-1}(\Haar(A,a + S))$$ and $\Aut(\Gamma) \cong \bar{a}^{-1}(S_{A/H}\wr\Aut(\Haar(H,a + S)))\bar{a}$, 
\item\label{second} $\Gamma$ is connected, and the action of the setwise stabilizer of each $B\in{\cal B}$ in $\Aut(\Gamma)$ is unfaithful on $B_1$.  There exists a subgroup $1 < H\le A$ such that $\Gamma\cong \Haar(A/H,S)\wr\bar{K}_\beta$ where $\beta = \vert H\vert$ and $S$ is interpreted as a set of cosets of $H$ in $A$, and $\Aut(\Gamma)\cong\Aut(\Haar(A/H,S))\wr S_\beta$.  Additionally, denoting the natural bipartition of $\Haar(A/H,S)$ by ${\cal D}$, the action of the setwise stabilizer of each $D\in{\cal D}$ on $D\in{\cal D}$ is faithful, 
\item\label{third} $\Aut(\Gamma) = \bar{a}^{-1}\Z_2\ltimes\Aut(\Cay(A,a + S))\bar{a}$ for some $a\in A$, or 
\item\label{fourth} the action of the setwise stabilizer $F$ of each $B\in {\cal B}$ in $Aut(\Gamma)$ on $B_1$ is faithful on $B_1$ but the induced actions of $F$ on $B_0$ and $B_1$ are not equivalent permutation representations of $F$. 
\end{enumerate}
\end{theorem}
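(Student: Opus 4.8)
The plan is to split into four mutually exclusive cases according to the connectivity of $\Gamma$ and the way the setwise block-stabilizer $F$ acts on its blocks, and to show that these four cases produce exactly the four conclusions. First I would record that $\Gamma$ is vertex-transitive: since $A$ is abelian, the map $(i,j)\mapsto(i+1,-j)$ preserves the edge set (an edge $(0,g)(1,g+s)$ is sent to $(1,-g)(0,-g-s)$, which is again an edge because $(-g)-(-g-s)=s\in S$), so together with $\widehat{A}_L$ it makes $\Aut(\Gamma)$ transitive. When $\Gamma$ is connected this forces $\mathcal B$ to be a block system and $F=\fix_{\Aut(\Gamma)}(\mathcal B)$ to have index $2$. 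The case division is then: $\Gamma$ disconnected (giving (\ref{first})); $\Gamma$ connected with $F$ unfaithful on $B_1$ (giving (\ref{second})); $\Gamma$ connected with $F$ faithful on $B_1$ and the two constituents $F^{B_0},F^{B_1}$ permutation equivalent (giving (\ref{third})); and the remaining connected faithful case with inequivalent constituents (giving (\ref{fourth})).

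For (\ref{first}) I would set $H=\langle S-S\rangle$ and observe that $\Gamma$ is disconnected exactly when $H<A$. Choosing $a\in A$ with $a+S\subseteq H$ (i.e.\ $0\in a+S$) normalizes the graph so that $\bar a(\Gamma)=\Haar(A,a+S)$ has one component equal to $\Haar(H,a+S)$ and the others its translates under $\widehat{A}_L$; these components are pairwise isomorphic and indexed by $A/H$. The automorphism group of a disconnected graph whose components are pairwise isomorphic is the wreath product of the symmetric group on the set of components with the automorphism group of a single component, which yields $\Aut(\Gamma)\cong\bar a^{-1}(S_{A/H}\wr\Aut(\Haar(H,a+S)))\bar a$.

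For (\ref{second}), let $N$ be the kernel of the action of $F$ on $B_1$; here $N\neq 1$. Two vertices of $B_0$ in the same $N$-orbit have identical neighbourhoods (all their neighbours lie in $B_1$, which $N$ fixes pointwise), so the $N$-orbits are ``twin'' classes. Since $N\trianglelefteq F$ and $\widehat{A}_L\le F$ acts regularly on $B_0$, the group $\widehat{A}_L$ permutes the $N$-orbits, so these orbits form a block system of the regular representation $A_L$ on $B_0\cong A$ and hence are the cosets of a subgroup $H\le A$ with $|H|=\beta>1$. Collapsing twin classes then exhibits $\Gamma$ as the lexicographic product $\Haar(A/H,S)\wr\bar K_\beta$, and a Sabidussi--Hemminger type theorem for automorphisms of such blow-ups gives $\Aut(\Gamma)\cong\Aut(\Haar(A/H,S))\wr S_\beta$; choosing $H$ maximal with this property secures the faithfulness of the quotient stabilizer, i.e.\ the ``Additionally'' clause.

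Finally, in the faithful connected case $N=1$, and the swap automorphism above conjugates the action of $F$ on $B_0$ to its action on $B_1$, so $F$ is faithful on both blocks and we may compare the constituents $F^{B_0}$ and $F^{B_1}$. If they are permutation equivalent, I would use an intertwining bijection $\lambda\colon B_0\to B_1$---adjusted by an element of $\widehat{A}_L$ and a suitable translate $\bar a$ so that it respects the common regular subgroup---to fold $\Gamma$ onto the Cayley digraph $\Cay(A,a+S)$; every element of $F$ then acts by one and the same $\sigma\in\Aut(\Cay(A,a+S))$ on both blocks, and the swap supplies the extra $\Z_2$, yielding $\Aut(\Gamma)=\bar a^{-1}(\Z_2\ltimes\Aut(\Cay(A,a+S)))\bar a$ as in (\ref{third}); if the constituents are inequivalent we are in case (\ref{fourth}) and there is nothing further to prove. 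The main obstacle I expect is this equivalent-constituents case: one must show the permutation equivalence can be realised compatibly with $\widehat{A}_L$ so that the fold is genuinely a Cayley digraph on $A$ with the correct translate, and then prove the resulting description of $\Aut(\Gamma)$ is an \emph{equality} rather than a mere containment. The exactness of the wreath-product description in case (\ref{second}) is a secondary difficulty requiring the maximality of $H$.
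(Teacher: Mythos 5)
You should know at the outset that this paper contains no proof of Theorem \ref{automorphism main result}: it is imported verbatim as \cite[Corollary 2.16]{Dobson2022}, so there is no in-paper argument to compare your proposal against, and what follows assesses your sketch on its own terms. Your four-way case split (disconnected; connected with $F$ unfaithful; connected, faithful, constituents equivalent; connected, faithful, constituents inequivalent) is exactly the right decomposition, and your treatments of cases (\ref{first}), (\ref{second}) and (\ref{fourth}) are sound: for (\ref{first}) the components of $\Haar(A,a+S)$ are the cosets of $\Z_2\times H$ with $H = \la (a+S)-(a+S)\ra$ (note this needs $S\neq\emptyset$, a degenerate case in which both the literal statement of (\ref{first}) and the argument break down, since the components are then singletons), and for (\ref{second}) the kernel-gives-twins argument, the identification of the twin classes with cosets of a subgroup via regularity of $\widehat{A}_L$, and Sabidussi's theorem applied to the twin-free quotient (guaranteed by maximality of $H$) give both $\Gamma\cong\Haar(A/H,S)\wr\bar{K}_\beta$ and the equality of automorphism groups. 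The one point you flag as the main obstacle, in case (\ref{third}), in fact dissolves immediately: by the definition of equivalent permutation representations (as given in this paper), the intertwining bijection $\lambda\colon B_0\to B_1$ intertwines \emph{every} element of $F$, in particular the two regular copies of $A$ coming from $\widehat{A}_L\le F$, and intertwining regular representations of $A$ forces $\lambda(x) = x + c$ with $c = \lambda(0)$; no adjustment by elements of $\widehat{A}_L$ or by a translate is needed, since the compatibility is built into the hypothesis. Folding along this translation identifies each $f\in F$ with $f^{B_0}$, and the edge condition $f^{B_1}(x+s)-f^{B_0}(x)\in S$ translates exactly into $f^{B_0}\in\Aut(\Cay(A,S-c))$; conversely every $\sigma\in\Aut(\Cay(A,S-c))$ lifts to $(\sigma,\lambda\sigma\lambda^{-1})\in F$. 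Since the swap $(i,j)\mapsto(i+1,-j)$ lies in $\Aut(\Gamma)$ and $F$ has index two there, this gives the stated \emph{equality} rather than a containment, with the translation accounting for the conjugation by $\bar{a}$. So, modulo sign bookkeeping and the $S=\emptyset$ caveat, your proposal completes to a correct proof with no further ideas beyond what you wrote.
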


A word about the isomorphisms in (1) and (2) in the preceding result.  They are necessary here only because the vertex set of a wreath product is a direct product, while $A$ need not be the same direct product.  To obtain equality, one only need replace the fibers $(g,H)$ with the left coset $gH$.

The rest of this section more or less consists of showing that the isomorphism problem when conditions (1), (2), and (3) of the previous result hold reduces to the isomorphism problem for Cayley digraphs of $A$, or the isomorphism problem for Haar graphs of quotients or subgroups of $A$.  We begin with the isomorphism problem for Haar graphs corresponding to Theorem \ref{automorphism main result} (3). 

\begin{lemma}\label{disconnected a is 0}
Let $A$ be an abelian group of odd order, and $S\subseteq A$ such that $S = -S$ and $\Haar(A,S)$ is disconnected.  Then $S\subseteq H$ for some $H\le A$.
\end{lemma}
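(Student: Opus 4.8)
The plan is to produce the subgroup $H$ explicitly as $H := \la S - S\ra$, the subgroup of $A$ generated by all differences $s - s'$ with $s, s' \in S$, and then to verify the single nonobvious inclusion $S \subseteq H$. Establishing that $H$ is a proper subgroup is where disconnectedness enters, while the inclusion $S \subseteq H$ is exactly where the two hypotheses $S = -S$ and $|A|$ odd do their work.

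First I would record the standard connectivity criterion for Haar graphs by tracing walks from $(0, 0_A)$. A neighbour of $(0, g)$ has the form $(1, g + s)$ and a neighbour of $(1, h)$ has the form $(0, h - s)$ with $s \in S$, so the vertices of $B_0$ reachable from $(0, 0_A)$ are precisely $\{0\} \times \la S - S\ra$, while those reached in $B_1$ form the single coset $\{1\} \times (s_0 + \la S - S\ra)$ for any fixed $s_0 \in S$ (if $S = \emptyset$ the graph has no edges and the statement is trivial with $H$ trivial; so assume $S \neq \emptyset$). Hence $\Haar(A,S)$ is connected if and only if $\la S - S\ra = A$, and since $\Haar(A,S)$ is disconnected we conclude that $H = \la S - S\ra$ is a \emph{proper} subgroup of $A$, which in particular contains $0_A$.

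It remains to show $S \subseteq H$, which is the crux of the argument. For each $s \in S$, the symmetry hypothesis $S = -S$ gives $-s \in S$, whence $2s = s - (-s) \in S - S \subseteq H$. Now I would invoke the odd-order hypothesis: since $\gcd(2, |A|) = 1$, the doubling map $x \mapsto 2x$ is an automorphism of $A$; moreover $2H \subseteq H$ because $H$ is closed under addition, so by finiteness $2H = H$ and doubling restricts to an automorphism of $H$. Given $s \in S$ with $2s \in H = 2H$, write $2s = 2h$ for some $h \in H$; then $2(s - h) = 0_A$, and as $A$ has odd order its only element of order dividing $2$ is $0_A$, forcing $s = h \in H$. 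Therefore $S \subseteq H$, and $H$ is the required subgroup.

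The only genuine obstacle is the passage from $2s \in H$ to $s \in H$; everything preceding it is bookkeeping about walks in a bipartite graph. That passage is precisely where odd order is indispensable, since for even order the hypothesis $S = -S$ alone would only yield $S \subseteq s_0 + H$, with $S$ potentially meeting a nontrivial coset of $H$ rather than $H$ itself.
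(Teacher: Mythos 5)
Your proof is correct, and it reaches the conclusion by a self-contained route that differs in structure from the paper's. The paper cites \cite[Lemma 1.8]{Dobson2022} to get that disconnectedness forces $S\subseteq a+H$ for some proper subgroup $H\le A$ and some $a\in A$; it then uses $S=-S$ to deduce that the coset $a+H$ equals its negative, so $2a\in H$, and kills $a$ modulo $H$ by observing that $a+H$ has order dividing $2$ in the odd-order group $A/H$. You instead construct the subgroup canonically as $H=\la S-S\ra$, prove the connectivity criterion (connected if and only if $\la S-S\ra=A$) from scratch by tracing walks, and then do the doubling argument element-by-element in $A$: $S=-S$ gives $2s=s-(-s)\in H$ for each $s\in S$, and odd order makes $x\mapsto 2x$ an automorphism with $2H=H$ and no $2$-torsion, forcing $s\in H$. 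The two arguments use the hypotheses $S=-S$ and $\vert A\vert$ odd in exactly the same roles (produce a doubled element of $H$, then halve it), so the algebraic core is the same; what your version buys is independence from the cited lemma and an explicit identification of $H$ as $\la S-S\ra$, which is precisely the subgroup that reappears later in the paper (e.g.\ in Lemma \ref{wreath components}, where $H=\la (a+S)(a+S)^{-1}\ra = \la S-S\ra$), whereas the paper's version is shorter given the external reference. Your closing remark on the failure for even order is also accurate: for instance $A=\Z_4$, $S=\{1,3\}$ has $H=\la S-S\ra=\{0,2\}$ and $S=1+H\not\subseteq H$, so the odd-order hypothesis cannot be dropped.
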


\begin{proof}
As $\Haar(A,S)$ is disconnected, by \cite[Lemma 1.8]{Dobson2022} $S\subseteq a + H$ for some $H\le A$ that we choose to be minimal, and $a\in A$.  As $S = -S$, $-a+H\subseteq a + H$, and so $2a + H\subseteq H$.  Then $2a\in H$, and $a + H$ has order dividing $2$ in $A/H$.  As $\vert A\vert$ is odd, $a + H = H$.  Thus $S\subseteq H$.
\end{proof}

\begin{lemma}\label{connected preserve bipartition}
Let $G$ be a group, and $S\subseteq G$.  If there exists $\phi\in S_{\Z_2\times G}$ such that $\phi^{-1}\widehat{G}_L\phi\le\Aut(\Haar(G,S))$ and does not have ${\cal B}$ as its orbits, then $\Haar(G,S)$ is disconnected.
\end{lemma}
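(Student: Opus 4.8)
The plan is to prove the contrapositive: assuming $\Haar(G,S)$ is connected, I will show that any conjugate $H:=\phi^{-1}\widehat{G}_L\phi$ that lies in $\Aut(\Haar(G,S))$ has ${\cal B}$ as its set of orbits, so that no $\phi$ as in the hypothesis can exist. First I would record the shape of $H$. Since $\widehat{G}_L$ is semiregular with its two orbits equal to $B_0$ and $B_1$, and semiregularity is preserved under conjugation in $S_{\Z_2\times G}$, the group $H$ is again semiregular of order $\vert G\vert$ and therefore has exactly two orbits on $\Z_2\times G$, each of size $\vert G\vert$. Hence proving that the orbits are ${\cal B}$ is the same as proving that each orbit of $H$ is contained in a single block $B_i$.

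Next I would use that $\Gamma=\Haar(G,S)$ is bipartite with the $B_i$ as its colour classes. If $S=\emptyset$ then $\Gamma$ is edgeless and hence disconnected, so the conclusion already holds; thus I may assume $\Gamma$ has an edge. A connected graph has a unique proper $2$-colouring up to interchanging the two colours, so a connected bipartite graph has a unique partition into two independent sets, and for $\Gamma$ this partition is exactly $\{B_0,B_1\}$. Consequently every automorphism of $\Gamma$ permutes the pair $\{B_0,B_1\}$; in particular each $h\in H$ satisfies $h(B_0)\in\{B_0,B_1\}$, so ${\cal B}$ is $H$-invariant and the action on ${\cal B}$ defines a homomorphism $\rho\colon H\to\Z_2$.

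The heart of the matter is to show $\rho$ is trivial. If it is, then each $h\in H$ fixes $B_0$ and $B_1$ setwise, so every orbit of $H$ lies inside a single $B_i$; since the two orbits and the two blocks all have size $\vert G\vert$, the orbits must be precisely $B_0$ and $B_1$. This is the negation of the hypothesis and completes the contrapositive. Thus everything reduces to excluding a semiregular element of $H$ that interchanges $B_0$ and $B_1$.

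I expect this exclusion to be the main obstacle, since a swapping element is exactly what places $K:=\fix_H({\cal B})=\Ker\rho$ at index $2$ in $H$. The approach I would take is to analyse $K$ together with the ambient copy $\widehat{G}_L\le\Aut(\Gamma)$, which fixes each $B_i$ setwise and is transitive on it; a putative swapping element of $H$ would conjugate the action on $B_0$ into the action on $B_1$, and the aim is to show that such an element is incompatible with connectivity. This is the delicate point, and it is where arithmetic constraints from the surrounding setting do the real work: most directly, when $\vert G\vert$ is odd there can be no index-$2$ subgroup $K$, so no swapping element exists and the orbits are forced to be ${\cal B}$. I would therefore isolate and invoke such a parity constraint to kill the swap and thereby finish the proof.
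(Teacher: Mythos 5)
Your reduction is sound up to its final step: $H=\phi^{-1}\widehat{G}_L\phi$ is semiregular with two orbits of size $\vert G\vert$; a connected bipartite graph has a unique bipartition into independent sets, so every automorphism of $\Haar(G,S)$ preserves the pair $\{B_0,B_1\}$, giving $\rho\colon H\to\Z_2$; and the orbits of $H$ are ${\cal B}$ precisely when $\rho$ is trivial. The genuine gap is the one you flag yourself: you never rule out an element of $H$ interchanging $B_0$ and $B_1$, and the parity fix you propose is not available, because the lemma is stated for an arbitrary group $G$ --- there is no oddness hypothesis to invoke. As written, your argument only proves the statement for groups with no subgroup of index $2$ (in particular, for $\vert G\vert$ odd).

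The paper proceeds differently and directly: it asserts that the orbit partition $\phi^{-1}({\cal B})$ is again a bipartition of $\Haar(G,S)$ into independent sets, and then a four-set argument (on the intersections $B_j\cap\phi^{-1}(B_k)$) shows that two distinct such bipartitions force disconnection; this second half is just the contrapositive of the uniqueness fact you use. But the asserted first step is exactly your missing step in disguise: $\phi^{-1}\widehat{G}_L\phi\le\Aut(\Haar(G,S))$ only says that $\phi(\Haar(G,S))$ is $\widehat{G}_L$-invariant, not that it is bipartite with parts ${\cal B}$, so the independence of the sets $\phi^{-1}(B_i)$ does not follow, and, given connectivity, that independence is equivalent to the nonexistence of a swapping element of $H$. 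In fact your hesitation is vindicated: the lemma as stated is false. Take $G=\Z_2$ and $S=G$, so that $\Haar(G,S)=K_{2,2}$ is connected. The map $\sigma(i,j)=(i+1,j)$ is a fixed-point-free involutory automorphism of $K_{2,2}$, the group $\la\sigma\ra$ is conjugate to $\widehat{G}_L$ in $S_{\Z_2\times G}$ (each is generated by a product of two disjoint transpositions), yet the orbits of $\la\sigma\ra$ are the pairs $\{(0,j),(1,j)\}$, $j\in\Z_2$, which are not ${\cal B}$. So neither your sketch nor the paper's proof establishes the stated generality; your approach, completed by the parity step, does prove the lemma whenever $G$ has no index-$2$ subgroup, which covers the odd-order abelian setting in which the paper actually applies it.
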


\begin{proof}
As ${\cal B}$ is a bipartition of a Haar graph of $G$, $\phi^{-1}({\cal  B})$ is also a bipartition of $\Haar(G,S)$.  As the orbits of $\phi^{-1}\widehat{G}_L\phi$ are not ${\cal B}$, $\phi^{-1}({\cal B})\not = {\cal B}$.  Let $i\in\Z_2$.  Then $\Haar(G,S)$ has no edges in $B_i\in{\cal B}$ or $\phi^{-1}(B_i)$.  Thus the only edges in $\Haar(G,S)$ are between $\phi^{-1}(B_i)$ and $\phi^{-1}(B_{i+1})$.  As ${\cal B}\not = \phi^{-1}({\cal B})$, $X_0 = B_0\cap\phi^{-1}(B_0)\not = \emptyset$ and $X_1 = B_0\cap\phi^{-1}(B_1)\not = \emptyset$.  Similarly, $X_2 = B_1\cap\phi^{-1}(B_1)\not = \emptyset$ and $X_3 = B_1\cap\phi^{-1}(B_0)\not = \emptyset$.  Then the vertices in $X_0$ can only be adjacent to the vertices in $X_2$ and vice versa, and the vertices in $X_1$ can only be adjacent to the vertices in $X_3$, and vice versa.  As $X_0\cup X_1\cup X_2\cup X_3 = \Z_2\times G$, the only edges in $\Haar(G,S)$ are contained in $X_0\cup X_2$ and $X_1\cup X_3$.  Hence $\Haar(G,S)$ is disconnected.
\end{proof}

We now need the formal definitions of a Cayley extension and solving set of a Cayley digraph $\Cay(G,S)$.

\begin{definition}
Let $G$ be a finite group.  We say that $S\subseteq S_G$ is a {\bf
solving set for a Cayley object $X$} in a class of Cayley objects
${\cal K}$ if for every $X'\in {\cal K}$ such that $X\cong X'$,
there exists $s\in S$ such that $s(X) = X'$, $s(1_G) = 1_G$ for every $s\in S$, and $\Aut(G)\le S$. 
\end{definition}

\begin{definition}
Let $X$ be a Cayley object of $G$ in ${\cal K}$.  We define a {\bf CI-extension of $G$ with respect to $X$}, denoted by $\CI(G,X)$, to be a set of permutations in $S_G$ that each fix $1_G$ and whenever $\delta\in S_G$ such that $\delta^{-1}G_L\delta\le\Aut(X)$, then there exists $v\in\Aut(X)$ such that $v^{-1}\delta^{-1}G_L\delta v = t^{-1}G_Lt$ for some $t\in \CI(G,X)$.
\end{definition}

In \cite[Lemma 20]{Dobson2014} it was shown that if a CI-extension of $G$ with respect to $X$ has been found, then so has a solving set.

\begin{lemma}\label{Reduction in Case 3}
Let $G$ be a group and $S\subseteq G$ such that $\Aut(\Haar(G,S)) = \bar{a}\Z_2\ltimes \Aut(\Cay(G,S))\bar{a}^{-1} = \Z_2\ltimes\Aut(\Cay(G,Sa))$ for some $a\in G$.  Let $T$ be a $\CI(G,Sa)$.  Then $\widehat{T}\bar{a}$ is an $\ABCI(G,S)$.
\end{lemma}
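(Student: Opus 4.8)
The plan is to verify directly that $\widehat{T}\bar{a}$ satisfies the three conditions of Definition \ref{ABCI def}. Condition (2) is immediate: every element $\widehat{t'}\bar{a}$ with $t'\in T$ fixes $(0,1_G)$, since $\bar{a}$ fixes $B_0$ pointwise and $t'$ fixes $1_G$ (elements of a $\CI(G,Sa)$ fix $1_G$). For condition (1), note that $\bar{a}\in\Iso(G)$ (take $i=0$ and $\alpha$ the identity automorphism in the description of $\Iso(G)$), so if $T$ contains the identity of $S_G$ then $\bar a\in\widehat{T}\bar a$; the identity permutation of $S_{\Z_2\times G}$ may then be adjoined using the non-uniqueness of ABCI-extensions noted after Lemma \ref{existence}. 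The whole content is therefore condition (3).

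For condition (3), the key is the structural identification in the hypothesis $\Aut(\Haar(G,S)) = \bar a(\Z_2\ltimes\Aut(\Cay(G,S)))\bar a^{-1} = \Z_2\ltimes\Aut(\Cay(G,Sa))$. Concretely, I would first record that $\bar a$ centralizes $\widehat{G}_L$ (left and right translations commute), and that the block stabilizer $F = \fix_{\Aut(\Haar(G,S))}({\cal B})$ is the ``$\Aut(\Cay(G,Sa))$'' factor: by the second description its transitive constituent $F^{B_0}$ on $B_0$ is $\Aut(\Cay(G,Sa))$, restriction to $B_0$ is an isomorphism of $F$ onto it, and the action of an element of $F$ on $B_1$ is the fixed right-translation-by-$a$ twist of its action on $B_0$ — this twist being exactly what $\bar a$ encodes. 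Now let $\phi\in S_{\Z_2\times G}$ with $\phi^{-1}\widehat{G}_L\phi\le\Aut(\Haar(G,S))$. I would first arrange that $\phi^{-1}\widehat{G}_L\phi$ has ${\cal B}$ as its orbits: if not, then $\Haar(G,S)$ is disconnected by Lemma \ref{connected preserve bipartition}, and one uses the component-permuting automorphisms to conjugate the orbit pair of $\phi^{-1}\widehat{G}_L\phi$ back onto ${\cal B}$. Absorbing a block-swapping element of $\Aut(\Haar(G,S))$ into the sought $v$ if necessary, I may further assume $\phi$ itself preserves $B_0$, so that $\delta := \phi^{B_0}\in S_G$ and $\delta^{-1}G_L\delta = (\phi^{-1}\widehat{G}_L\phi)^{B_0}\le F^{B_0} = \Aut(\Cay(G,Sa))$.

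At this point the Cayley machinery applies. Since $T$ is a $\CI(G,Sa)$ and $\delta^{-1}G_L\delta\le\Aut(\Cay(G,Sa))$, there exist $w\in\Aut(\Cay(G,Sa))$ and $t'\in T$ with $w^{-1}\delta^{-1}G_L\delta w = t'^{-1}G_L t'$. I would then lift: let $v\in F$ be the unique block-preserving automorphism of $\Haar(G,S)$ with $v^{B_0} = w$, and set $t = \widehat{t'}\bar a\in\widehat{T}\bar a$. By construction the identity $v^{-1}\phi^{-1}\widehat{G}_L\phi v = t^{-1}\widehat{G}_L t$ holds after restriction to $B_0$. To conclude it holds on all of $\Z_2\times G$, I would observe that $v\in F$ and both $\phi^{-1}\widehat{G}_L\phi$ and $t^{-1}\widehat{G}_L t$ lie in $F$, whose $B_1$-action is determined by its $B_0$-action through the fixed right-translation twist; since the twist carried by $t = \widehat{t'}\bar a$ on $B_1$ is precisely this same twist, the $B_0$-equality propagates to $B_1$, giving the full equality and hence condition (3).

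The step I expect to require the most care is this final matching on $B_1$: one must check that the right translation built into $\bar a$ is exactly the twist relating the two transitive constituents of $F$, so that the $B_1$-restrictions of the two sides coincide rather than differing by conjugation by the right translation $x\mapsto xa^{2}$. This is precisely why the translation $\bar a$ (and not merely $\widehat{T}$, nor a different coset representative) appears in the statement, and getting the direction of the translation right is the crux of the computation. The other delicate point is structural rather than computational, namely the disconnected case of the preliminary reduction, where ${\cal B}$ need not be $\Aut(\Haar(G,S))$-invariant and one must exploit the freedom to flip bipartition classes on individual components before restricting to $B_0$.
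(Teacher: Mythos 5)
Your main argument is the paper's own proof written in different coordinates. The paper conjugates by $\bar{a}$ so as to work inside $\Aut(\Haar(G,Sa)) = \Z_2\ltimes\Aut(\Cay(G,Sa))$, where the fixer of ${\cal B}$ is the untwisted diagonal $\{\widehat{\delta}:\delta\in\Aut(\Cay(G,Sa))\}$; there the restriction-to-$B_0$, CI-extension, and lifting steps are immediate because $\delta\mapsto\widehat{\delta}$ is a homomorphism, and conjugating back by $\bar{a}$ produces exactly the conclusion with elements of the form $\widehat{t}\bar{a}$. You instead stay in $\Haar(G,S)$, where $F = \fix_{\Aut(\Haar(G,S))}({\cal B})$ is the $\bar{a}$-conjugate (twisted) copy of that diagonal, and you close by observing that both $v^{-1}\phi^{-1}\widehat{G}_L\phi v$ and $(\widehat{t'}\bar{a})^{-1}\widehat{G}_L(\widehat{t'}\bar{a})$ lie in $F$, that they agree on $B_0$ by the CI-extension property of $T$, and that restriction to $B_0$ is injective on $F$. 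That ``twist matching'' is precisely what conjugation by $\bar{a}$ accomplishes in the paper's proof, so in the connected, block-preserving case your argument is correct and essentially identical to the paper's (and your verification of conditions (1) and (2) of Definition \ref{ABCI def} is more explicit than the paper's, whose proof addresses only condition (3)).

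The genuine flaw is your treatment of the case in which $\phi^{-1}\widehat{G}_L\phi$ does not have ${\cal B}$ as its orbits. You propose to repair it using ``component-permuting automorphisms'' to move the orbit pair of $\phi^{-1}\widehat{G}_L\phi$ back onto ${\cal B}$. No such automorphisms need exist: Lemma \ref{connected preserve bipartition} produces a second bipartition of the (disconnected) graph transverse to ${\cal B}$, and nothing guarantees an automorphism carrying one bipartition onto the other, so this step would fail if the case ever arose. The paper closes this branch with the observation your proposal is missing: the hypothesis itself forces $\Haar(G,S)$ to be connected, because a disconnected graph has automorphism group a nontrivial wreath product (as a permutation group), which is incompatible with $\Aut(\Haar(G,S)) = \Z_2\ltimes\Aut(\Cay(G,Sa))$. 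With that one observation the problematic branch is vacuous and the rest of your argument goes through; without it, your proof as written has an unjustified (and in general false) step.
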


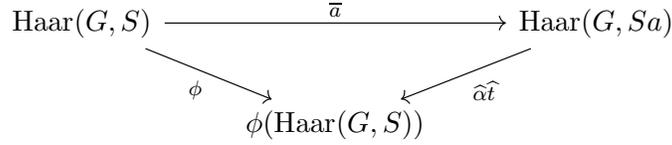
\begin{figure}

\begin{center}
\begin{tikzcd}
    \Haar(G,S) \arrow[dr, swap, "\phi"] \arrow[rr, "\overline{a}"] & & \Haar(G,Sa) \arrow[dl, "\widehat{\alpha} \widehat{t}"]\\
    & \phi(\Haar(G,S))  
\end{tikzcd}
\end{center}
\caption{The maps in Lemma \ref{Reduction in Case 3}}
\label{maps 1}
\end{figure}

\begin{proof}
The maps used in this proof, along with corresponding graphs, are shown in Figure \ref{maps 1}.  Let $\Gamma = \Haar(G,S)$.  First observe that by hypothesis, $\Gamma$ is vertex-transitive. Let $\phi\in S_{\Z_2\times G}$ such that $\phi^{-1}\widehat{G}_L\phi\le\Aut(\Gamma)$.  As $\Aut(\Gamma) = \bar{a}\Z_2\ltimes\Aut(\Cay(G,S))\bar{a}^{-1}$, we see that $\Gamma$ is connected, as a disconnected graph has automorphism group a wreath product.  As $\Gamma$ is connected, ${\cal B}$ is a block system of $\Aut(\Gamma)$, and $\phi({\cal B}) = {\cal B}$ by Lemma \ref{connected preserve bipartition}.  Then $\bar{a}\phi^{-1}\widehat{G}_L\phi \bar{a}^{-1}\le\Aut(\Haar(G,Sa)) = \Z_2\ltimes\Aut(\Cay(G,Sa))$. Note that ${\cal B}$ is also a block system of $\Z_2\ltimes\Aut(\Cay(G,Sa))$.  Also, if $\gamma\in\fix_{\Z_2\ltimes\Aut(\Cay(G,Sa))}({\cal  B})$, then $\gamma = \widehat{\delta}$ for some $\delta\in S_G$.  As there is $\kappa\in\Aut(\Gamma)$ with $\kappa(B_1) = B_0$, replacing $\phi$ with $\phi\kappa$, we may assume without loss of generality that $\phi$ fixes each $B\in{\cal B}$.  This implies that if $\omega\in S_{\Z_2\times G}$ such that $\omega^{-1}\widehat{G}_L\omega\le\fix_{\Aut(\Haar(G,Sa))}({\cal B})$ and there is $v\in\Aut(\Cay(G,Sa))$ and $t\in \CI(G,Sa)$ such that $v^{-1}(\omega^{-1})^{B_0}G_L\omega^{B_0} v = t^{-1}G_Lt$, then $\widehat{v}^{-1}\omega^{-1}\widehat{G}_L\omega\widehat{v} = \widehat{t}^{-1}\widehat{G}_L\widehat{t}$.  We then see that there is $\widehat{v}\in\Aut(\Haar(G,Sa))$ and $\widehat{t}\in \widehat{\CI(G,Sa)}$ such that $\widehat{v}^{-1}\bar{a}\phi^{-1}\widehat{G}_L\phi \bar{a}^{-1}\widehat{v} = \widehat{t}^{-1}\widehat{G}_L\widehat{t}$.  Then $$\bar{a}^{-1}\widehat{v}^{-1}\bar{a}\phi^{-1}\widehat{G}_L\phi \bar{a}^{-1}\widehat{v}\bar{a} = \bar{a}^{-1}\widehat{t}^{-1}\widehat{G}_L\widehat{t}\bar{a}.$$ As $\widehat{v}\in\Aut(\Haar(G,Sa))$ if and only if $\bar{a}^{-1}\widehat{v}\bar{a}\in\Aut(\Haar(G,S))$ we have that an $\ABCI(G,S) = \widehat{\CI(G,Sa)}\bar{a}$.  For Figure \ref{maps 1}, this means that there is $\alpha\in\Aut(G)$ and $t\in T$ such that $\widehat{\alpha}\widehat{t}$ is an isomorphism from $\Haar(G,Sa)$ to $\phi(\Haar(G,S))$.  This means that $\Gamma$ and $\phi(\Gamma)$ are isomorphic by $\widehat{\alpha}\widehat{t}\bar{a}$, and as $\phi$ was arbitrary, $\widehat{T}\bar{a}$ is an $\ABCI(G,S)$. 
\end{proof}

\begin{corollary}\label{Stable CI}
Let $G$ be a group and $S\subseteq G$ such that $$\Aut(\Haar(G,S)) = \bar{a}^{-1}\Z_2\ltimes\Aut(\Cay(G,Sa))\bar{a}$$ for some $a\in A$.
If $\Cay(G,Sa)$ is a CI-digraph of $G$, then $\Haar(G,S)$ is an $\ABCI$-graph of $G$.
\end{corollary}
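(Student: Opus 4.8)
The plan is to run the CI-hypothesis on $\Cay(G,Sa)$ through Lemma~\ref{Reduction in Case 3} and then recognize the resulting ABCI-extension as already living inside $\Iso(G)$. First I would translate the hypothesis using Babai's Lemma. Since $\Cay(G,Sa)$ is a CI-digraph of $G$, Lemma~\ref{Ciso} says that whenever $\delta\in S_G$ with $\delta^{-1}G_L\delta\le\Aut(\Cay(G,Sa))$, the subgroups $G_L$ and $\delta^{-1}G_L\delta$ are conjugate in $\Aut(\Cay(G,Sa))$; written in the language of a $\CI(G,Sa)$, this is exactly the statement that the trivial set $T=\{1\}$ satisfies the defining condition of a CI-extension. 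So I may take $T=\{1\}$ as a $\CI(G,Sa)$.

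Next I would apply Lemma~\ref{Reduction in Case 3} with this $T$. Its conclusion is that $\widehat{T}\bar{a}=\{\bar{a}\}$ is an $\ABCI(G,S)$ (we may harmlessly adjoin the identity so that Definition~\ref{ABCI def}(1) holds, since doing so preserves conditions (2) and (3)). The structural point is that every element of this extension lies in $\bar{G}$, and $\bar{G}\subseteq\Iso(G)$: taking $i=0$ and $\alpha$ the identity automorphism in the description $\Iso(G)=\{\tau^i\widehat{\alpha}\bar{g}\}$ gives $\bar{g}\in\Iso(G)$ for every $g$, so in particular $\bar{a}\in\Iso(G)$. Having the ABCI-extension in hand, I would invoke Lemma~\ref{contool1}: the set $\{\alpha t:\alpha\in\Iso(G),\ t\in\{1,\bar{a}\}\}$ is a solving set for $\Haar(G,S)$. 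This set equals $\Iso(G)\cup\Iso(G)\bar{a}$, and because $\bar{a}$ lies in the group $\bar{G}$ one has $\bar{g}\bar{a}=\overline{ga}$, so right multiplication by $\bar{a}$ merely permutes the $\bar{g}$-factor and stabilizes $\Iso(G)=\{\tau^i\widehat{\alpha}\bar{g}\}$ setwise. Hence the solving set collapses to $\Iso(G)$ itself. By Definition~\ref{solving set def}, a solving set equal to $\Iso(G)$ says precisely that every $\Haar(G,T)$ isomorphic to $\Haar(G,S)$ is isomorphic to it by a map in $\Iso(G)$, which is the definition of $\Haar(G,S)$ being an $\ABCI$-graph of $G$.

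The only step needing genuine care is the verification $\Iso(G)\bar{a}=\Iso(G)$, i.e.\ that the correction factor $\bar{a}$ arising from passing between $\Haar(G,S)$ and $\Haar(G,Sa)$ does not enlarge the list of isomorphisms beyond $\Iso(G)$. This is where one must use that $\bar{a}$ is not an arbitrary permutation but belongs to the subgroup $\bar{G}$ (equivalently, $\bar{a}$ normalizes $\widehat{G}_L$ by Lemma~\ref{normalizer}), so no real obstruction appears; I expect the remainder of the argument to be formal. An equally short alternative, if one prefers to avoid the solving-set computation, is to note directly that for $t\in\{1,\bar{a}\}$ we have $t^{-1}\widehat{G}_Lt=\widehat{G}_L$ since $\bar{a}$ normalizes $\widehat{G}_L$, so Definition~\ref{ABCI def}(3) yields $v^{-1}\phi^{-1}\widehat{G}_L\phi v=\widehat{G}_L$ for some $v\in\Aut(\Haar(G,S))$, establishing the conjugacy criterion for being an $\ABCI$-object.
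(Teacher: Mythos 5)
Your proof is correct and takes essentially the same route as the paper: both feed the CI-hypothesis into Lemma \ref{Reduction in Case 3} to obtain an $\ABCI(G,S)$ lying inside $\Iso(G)$ (the paper keeps a general $\CI(G,Sa)$ with $T\le\Aut(G)$, where you instantiate $T=\{1\}$ via Lemma \ref{Ciso}) and then conclude through the solving-set machinery of Lemma \ref{contool1}. Your explicit check that $\Iso(G)\bar{a}=\Iso(G)$ (and the alternative ending via the conjugacy criterion) just fills in a step the paper leaves implicit.
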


\begin{proof}
By Lemma \ref{Reduction in Case 3}, $\widehat{T}\bar{a}$ is an $\ABCI(G,S)$, where $T$ is an $\CI(G,Sa)$.  As  $\Cay(G,Sa)$ is a CI-digraph of $G$, $T\le\Aut(G)$ and so $\widehat{T}\subset \Iso(G)$.  Then $\widehat{T}\bar{a}\subset \Iso(G)$, and $\Haar(G,S)$ is an $\ABCI$-graph of $G$.
\end{proof}

This completes the isomorphism problem for Haar graphs corresponding to Theorem \ref{automorphism main result} (3).  We now turn to the isomorphism problem for Haar graphs corresponding to Theorem \ref{automorphism main result} (2).  We begin with the necessary terms and notation.

\begin{definition}
Let $\Gamma$ be a vertex-transitive digraph whose automorphism group contains a transitive subgroup $G$ with a block system ${\cal B}$.  Define the {\bf block quotient digraph of $\Gamma$ with respect to ${\cal B}$}, denoted $\Gamma/{\cal B}$, to be the digraph with vertex set ${\cal B}$ and arc set $\{(B,B'):B,B'\in{\cal B}, B\neq B',{\rm\ and\ }(u,v)\in A(\Gamma){\rm\ for\ some\ }u\in B{\rm\ and\ } v \in B'\}$.
\end{definition}

See \cite[Section 3.6]{Book} for more information about block quotient digraphs as well as examples.

\begin{definition}
Let $A$ be an abelian group and $S\subseteq A$ such that $\Haar(A,S)$ is connected. We denote the set-wise stabilizer of the bipartition ${\cal B}$ by $F_{A,S}$.
\end{definition}

Recall that if $\Haar(A,S)$ is connected, then ${\cal B}$ is a block system of $\Aut(\Haar(A,S))$.  Hence $F_{A,S} = \fix_{\Aut(\Haar(A,S))}({\cal B})$.

\begin{definition}
Let $G$ be a group, $C\tl G$, and $\phi:G/C\to G/C$ be a function.  A function $\phi_G:G\to G$ is an {\bf extension of $\phi$ to $G$} if $\phi_G$ maps left cosets of $C$ to left cosets of $C$ and $\phi_G(gC) = \phi(gC)$ for every $g\in G$.
\end{definition}

\begin{definition}
Let $G\le S_X$ and $H\le S_Y$.  Let $C_x = \{x\}\times Y$, $x\in X$, and ${\cal C} = \{C_x:x\in X\}$.  Then ${\cal C}$ is a block system of $G\wr H$.  It is called the {\bf lexi-partition of $G\wr H$ corresponding to $Y$}.  
\end{definition}

\begin{definition}
A graph $\Gamma$ is {\bf reducible} if there exist distinct vertices $u,v\in V(\Gamma)$ such that the neighbors of $u$ and $v$ in $\Gamma$ are the same.  A graph that is not reducible is {\bf irreducible}.
\end{definition}

The `reducible' terminology is due to Sabidussi \cite{Sabidussi1964}.  The same idea has been independently considered by many other authors, including Kotlov and Lov\'asz \cite{KotlovL1996}, who called the vertices $u,v\in V(\Gamma)$ with the same neighbors in $\Gamma$ {\bf twins}. Consequently, an irreducible graph is often called {\bf twin-free}.  The reducible/irreducible terminology is often used when using the wreath product (as we will use in this section), and the twin/twin-free terminology is often used when considering unstable graphs (which we will use in Section \ref{applications}).  Note that the graphs satisfying Theorem \ref{automorphism main result} (\ref{second}) are always reducible.


Isomorphisms of Haar graphs that are wreath products are straightforward once one is aware of certain examples.  Let $p$ be an odd prime.  In \cite[Example 6.4]{DobsonM2009} an example is given of a Cayley digraph of $\Z_p\times\Z_{p^2}$ that is a wreath product with the property that its automorphism group has a unique block system ${\cal C}$ with blocks of size $p$, and it has two different subgroups isomorphic to $\Z_p\times\Z_{p^2}$ whose quotients by ${\cal C}$ are not isomorphic.  One quotient is $\Z_{p^2}$ and the other is $\Z_p\times\Z_p$ (most of the stated information must be extracted from the proof).  For us, this means that it is possible to have two Cayley digraphs of different groups whose wreath product with another Cayley digraph yields two Cayley digraphs of the same group $G$.  That is, to establish isomorphism between the two wreath products, we must consider all the possible quotient groups of $G$ that are contained in the automorphism group of the quotient graph, not just the one with quotient $G_L/{\cal C}$.  The same sort of problem also arises for Haar graphs (simply consider the Haar graphs corresponding to the Cayley graphs mentioned earlier in this paragraph).

We now also restrict our attention to Haar graphs of abelian groups.  There are two reasons for this.  First, our main applications in the next section apply only to Haar graphs of abelian groups.  Second, and more importantly, we will need to consider block quotient digraphs, and the block quotient digraph of a Haar graph of a nonabelian group $G$ need not be a Haar graph if the quotient is the set of left cosets of a non-normal subgroup of $G$, but may be a more general graph called a bicoset graph (see \cite{DuX2000}) which we will not consider here.


We will need a new idea for our next result, which will also be used later to determine the isomorphisms between disconnected Haar graphs of abelian groups where the sets of components are not the same.


\begin{definition}
A group $G\le S_n$ is {\bf $G$-semitransitive} if $G$ has exactly two orbits.
\end{definition}

The following result is a special case of \cite[Lemma 2.4]{DuX2000} together with some facts from its proof.  The result \cite[Lemma 2.4]{DuX2000} is the analogue of Sabidussi's Theorem \cite[Lemma 4]{Sabidussi1958} or \cite[Theorem 1.2.20]{Book} that a vertex-transitive digraph is isomorphic to a double coset digraph for bipartite graphs.   What we state here is the special case when $G$ is semiregular.

\begin{lemma}\label{Sabidussi biCayley}
Suppose $\Gamma$ is a $G$-semitransitive graph with $U(\Gamma)$ and $W(\Gamma)$ the orbits of $G$ such that $V(\Gamma) = U(\Gamma) \cup W(\Gamma)$ is a bipartition of $\Gamma$. Take $u\in U (\Gamma)$ and $w\in W(\Gamma)$. Set $S = \{g\in G: g(w)\in N_{\Gamma}(u)\}$. If $G$ is semiregular, then $\Gamma \cong \Haar(G,S)$ by the map that sends $g(u)$ to $gH_0$ and $g(w)$ to $gH_1$.
\end{lemma}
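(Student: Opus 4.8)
The plan is to prove this as the bipartite analogue of Sabidussi's theorem, by writing down the stated map explicitly and checking directly that it is a bijection carrying edges to edges and non-edges to non-edges. Since the hypothesis ``$\Gamma$ is $G$-semitransitive'' means $G\le\Aut(\Gamma)$ has exactly the two orbits $U(\Gamma)$ and $W(\Gamma)$, and since $G$ is assumed semiregular, the action of $G$ on each orbit is in fact regular. Thus, after fixing the base points $u$ and $w$, every vertex of $U(\Gamma)$ is \emph{uniquely} of the form $g(u)$ and every vertex of $W(\Gamma)$ is uniquely of the form $g(w)$: indeed, if $g(u)=g'(u)$ then $g^{-1}g'$ fixes $u$, so $g^{-1}g'=1$ by semiregularity. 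In the general bicoset statement the point stabilizers $H_0,H_1$ appear, but here both collapse to the identity, so $gH_0$ and $gH_1$ are to be read as $(0,g)$ and $(1,g)$ in $\Z_2\times G$. This uniqueness is exactly what makes the map $\psi$ sending $g(u)\mapsto(0,g)$ and $g(w)\mapsto(1,g)$ well defined, and since it restricts to a bijection $U(\Gamma)\to B_0$ and $W(\Gamma)\to B_1$, it is a bijection of vertex sets onto $V(\Haar(G,S))$.

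The heart of the argument is the adjacency check. Because $V(\Gamma)=U(\Gamma)\cup W(\Gamma)$ is a bipartition, every edge of $\Gamma$ joins some $g(u)$ to some $h(w)$, and likewise every edge of $\Haar(G,S)$ joins $B_0$ to $B_1$; so it suffices to decide when $g(u)$ is adjacent to $h(w)$. Here I would use that $g^{-1}\in G\le\Aut(\Gamma)$: applying this automorphism gives $g(u)\sim h(w)$ if and only if $u\sim (g^{-1}h)(w)$, that is, if and only if $(g^{-1}h)(w)\in N_\Gamma(u)$, which by the definition of $S$ is precisely the condition $g^{-1}h\in S$. On the Haar side, $(0,g)(1,h)\in E(\Haar(G,S))$ if and only if $g^{-1}h\in S$, by the remark following the definition of Haar graphs. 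Combining the two equivalences shows $g(u)\sim h(w)$ in $\Gamma$ exactly when $\psi(g(u))\psi(h(w))\in E(\Haar(G,S))$, so $\psi$ is an isomorphism.

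I do not expect a genuine obstacle, as the result is a direct specialization of \cite[Lemma 2.4]{DuX2000}; the only points requiring care are bookkeeping. The first is interpreting the $H_0,H_1$ notation of the general bicoset statement in the semiregular case, where the stabilizers are trivial and the bicoset graph degenerates into a Haar graph. The second is being explicit that it is semiregularity, and not merely transitivity of $G$ on each orbit, that guarantees the representations $g(u)$ and $g(w)$ are unique, so that $\psi$ is an honest map rather than a relation depending on a choice of coset representatives. Once these are pinned down, the adjacency equivalence is the single computation using $g^{-1}\in\Aut(\Gamma)$ displayed above.
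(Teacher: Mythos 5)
Your proof is correct, but it takes a different route from the paper: the paper gives no argument at all for this lemma, instead citing it as the semiregular special case of the bicoset-graph analogue of Sabidussi's theorem, \cite[Lemma 2.4]{DuX2000}, ``together with some facts from its proof.'' You instead give a self-contained verification: regularity of $G$ on each orbit (semiregularity plus transitivity on the orbit) makes $g\mapsto g(u)$ and $g\mapsto g(w)$ bijections, hence the map $\psi$ sending $g(u)\mapsto(0,g)$ and $g(w)\mapsto(1,g)$ is a well-defined bijection onto $\Z_2\times G$; then the single computation $g(u)\sim h(w) \iff u\sim(g^{-1}h)(w) \iff g^{-1}h\in S$, using $g^{-1}\in G\le\Aut(\Gamma)$, matches the Haar adjacency rule $(0,g)(1,h)\in E(\Haar(G,S))\iff g^{-1}h\in S$, and bipartiteness on both sides ensures no other adjacencies need checking. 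Your reading of the statement's undefined $H_0,H_1$ notation (a relic of the Du--Xu bicoset setting, where they are the two point stabilizers) as collapsing to $(0,g)$ and $(1,g)$ in the semiregular case is also exactly right. What your approach buys is a short, elementary proof that makes the paper's lemma independent of the external reference and makes explicit where semiregularity (as opposed to mere orbit-transitivity) is used; what the paper's citation buys is placement of the result inside the general bicoset framework, which is also where the $H_0,H_1$ notation acquires its meaning.
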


Let $A$ be an abelian group and $S\subseteq A$.  Suppose that $\Haar(A,S)$ is also a $G$-semitransitive graph with $G\le\Aut(\Haar(A,S))$ abelian and semiregular with two orbits which are also $B_0$ and $B_1$.  That is, the bipartition ${\cal B}$ of $\Haar(A,S)$ is also the set of orbits of $G$.  We wish to apply Lemma \ref{Sabidussi biCayley} and so choose $u = (0,0)$ and $w = (1,0)$.  
Then $\Haar(A,S)$ is isomorphic to $\Haar(G,T)$, where $T = \{g\in G:g(1,0)\in N_{\Haar(A,S)}(0,0)\}$ by the map, call it $\phi$, that sends $g(0,0)$ to $(0,g)$ and $g(1,0)$ to $(1,g)$.  Of course, all $\phi$ really does is relabel $\Z_2\times A$ with elements of $\Z_2\times G$ as given by $\phi$.  We call $\phi$ an {\bf $A$ to $G$ relabeling of $\Haar(A,S)$}.  Also, $\phi(\Haar(A,S)) = \Haar(G,T)$ is still isomorphic to a Haar graph of $A$, but it no longer {\it is} a Haar graph of $A$.

Note that in the following lemma, we found it easier to directly construct a solving set, rather than finding an ABCI-extension.

\begin{lemma}\label{action unfaithful 1}
Let $A$ be an abelian group, $S\subseteq A$ such that $\Haar(A,S)$ is connected, and there exists $C < A$, chosen to be maximal, such that $\Haar(A,S)\cong \Haar(D,U)\wr \bar{K}_{C}$, where $D = A/C$, and $U$ is a union of left cosets of $C$ in $A$. Let ${\cal L}$ be the set of all quotient groups of $A$ that are contained in $\Aut(\Haar(D,U))$.  For each $L\in{\cal L}$, let $\delta_L:L\to D$ be an $L$ to $D$ relabeling of $\Haar(D,U)$.  Let ${\cal S}$ be a solving set for $\Haar(D,U)$, and $T = \{s\delta_L:s\in {\cal S},L\in{\cal L}\}$.   Then a solving set for $\Haar(A,S)$ is $$\{t_A:t_A{\rm\ is\ any\ extension\ of\ }t{\rm\ to\ }\Z_2\times A{\rm\ and\ }t\in T\}.$$  
\end{lemma}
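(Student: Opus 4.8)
The plan is to reduce the isomorphism problem for $\Gamma = \Haar(A,S)$ to that of its wreath-base $\Haar(D,U)$, whose solving set ${\cal S}$ is given, and then to transport isomorphisms of the base back up to $\Gamma$ by taking extensions. The one genuine difficulty — and the step I expect to be the main obstacle — is that a Haar graph $\Haar(A,T')$ of $A$ isomorphic to $\Gamma$ need not decompose as a wreath product along the same subgroup $C$: its fibers may be the cosets of a different subgroup $L' \le A$, so that its base quotient is a Haar graph of a quotient group $\bar L = A/L'$ of $A$ that can fail to be isomorphic to $D$ (precisely the $\Z_p \times \Z_{p^2}$ phenomenon described before the lemma). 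This is why ${\cal L}$ must range over all quotient groups of $A$ that embed in $\Aut(\Haar(D,U))$, and why one relabeling $\delta_L$ per such quotient group is needed to bring every such base back onto the single graph $\Haar(D,U)$.

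First I would record that the fibers are canonical. Since $\Gamma$ is connected and $C$ is maximal, the base $\Haar(D,U)$ is irreducible (twin-free), since otherwise a further wreath factor would enlarge $C$, contradicting maximality. Consequently, in any graph isomorphic to $\Gamma$ the vertices sharing a common neighborhood are exactly the fibers of its wreath decomposition, so these fibers form a canonical block system of its automorphism group. For $\Gamma$ itself this partition is ${\cal C}$, the set of cosets of $C$, and by Theorem \ref{automorphism main result} (\ref{second}) we have $\Aut(\Gamma) = \Aut(\Haar(D,U)) \wr S_\beta$ with $\beta = |C|$. I would then invoke the standard fact about wreath products with empty fibers: if $\psi$ is an isomorphism between the two bases, then any extension of $\psi$ mapping fibers to fibers is an isomorphism between the two wreath products. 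Thus it suffices to realize, for every $\Haar(A,T') \cong \Gamma$, an isomorphism of base quotients by an element of $T$ and then lift.

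So let $\Haar(A,T')$ be an arbitrary Haar graph of $A$ isomorphic to $\Gamma$, with twin-partition ${\cal C}'$. Because $\widehat{A}_L$ is regular on each part $B_i$ and ${\cal C}'$ is one of its block systems (block systems of the regular group $\widehat{A}_L^{B_i}$ being coset partitions of subgroups of $A$), the trace of ${\cal C}'$ on $B_i$ is the coset partition of a subgroup; as $A$ is abelian the map $(i,j) \mapsto (i+1,-j)$ lies in $\Aut(\Haar(A,T'))$ and interchanges the two subgroups, forcing them to coincide in a single $L' \le A$ with $|L'| = |C|$. Hence ${\cal C}'$ is the coset partition of $L'$, the quotient $\Haar(A,T')/{\cal C}'$ is by Lemma \ref{Sabidussi biCayley} a Haar graph of $\bar L = A/L'$, and it is isomorphic to $\Haar(D,U)$. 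Transporting the semiregular group $\widehat{\bar L}_L$ along this isomorphism exhibits $\bar L$ as a quotient group of $A$ embedded in $\Aut(\Haar(D,U))$, so $\bar L \in {\cal L}$. The relabeling $\delta_{\bar L}$ now identifies $\Haar(A,T')/{\cal C}'$ with a genuine Haar graph of $D$ isomorphic to $\Haar(D,U)$, to which ${\cal S}$ applies: there is $s \in {\cal S}$ so that $t = s\delta_{\bar L} \in T$ is a base isomorphism $\Gamma/{\cal C} \to \Haar(A,T')/{\cal C}'$ under the identification of $\bar L$ with $D$ via $\delta_{\bar L}$. Any extension $t_A$ of $t$ to $\Z_2 \times A$, mapping the cosets of $C$ to the cosets of $L'$, is then the desired isomorphism $\Gamma \to \Haar(A,T')$ by the lifting step, establishing Condition (1) of Definition \ref{solving set def}.

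It remains to check the bookkeeping conditions. Condition (3), $\Iso(A) \subseteq S$, holds because each $\tau^i\widehat{\alpha}\bar{a} \in \Iso(A)$ carries ${\cal C}$ to the coset partition of $\alpha(C)$ and so is itself an extension of a base map which, after the relabeling $\delta_L$ for the quotient group $A/\alpha(C) \in {\cal L}$, lies in $\Iso(D) \subseteq {\cal S}$; in particular the choice $\bar L = D$ with $\delta_{\bar L}$ trivial recovers the elements coming from $\Iso(D)$. Condition (2) is arranged by selecting, whenever $t_A$ preserves $B_0$, the extension that fixes $(0,1_A)$; since the fibers are homogeneous under $\widehat{A}_L \le \Aut(\Gamma)$ this costs nothing. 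The heart of the argument, as flagged above, is the identification of the quotient group $\bar L$ of the target with a member of ${\cal L}$ and the matching of the two distinct fiber subgroups $C$ and $L'$ through the relabeling $\delta_{\bar L}$; everything else is the routine wreath-product lifting.
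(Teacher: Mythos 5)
Your argument is correct and follows essentially the same route as the paper's proof: maximality of $C$ makes the base $\Haar(D,U)$ irreducible so that the fiber (lexi-)partition is a canonical block system, the fiber partition of any isomorphic Haar graph of $A$ is then the coset partition of a single subgroup $L'$ whose quotient is a Haar graph of $A/L'\in{\cal L}$ isomorphic to $\Haar(D,U)$, and an isomorphism is produced by composing the relabeling $\delta_{A/L'}$ with an element of ${\cal S}$ and extending it to $\Z_2\times A$. The differences are only in bookkeeping — you derive canonicity of the fibers from twin-freeness rather than citing the uniqueness of block systems of wreath products, you justify via the map $(i,j)\mapsto(i+1,-j)$ that the fiber subgroups on $B_0$ and $B_1$ coincide, and you additionally check Conditions (2) and (3) of Definition \ref{solving set def}, all of which the paper's proof leaves implicit or cites — so your proposal matches the paper's proof at the same (or slightly greater) level of rigor.
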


\begin{proof}
Set $\Gamma = \Haar(A,S)$.  As $\Gamma$ is connected, we see that ${\cal B}$ is a block system of $\Aut(\Gamma)$.  As $1 < C < A$ and $\Gamma\cong \Haar(D,U)\wr \bar{K}_{C}$, the action of $F_{A,S}$ on $B\in{\cal B}$ is not faithful.  Choosing $C$ to be maximal gives $\Haar(D,U)$ is irreducible by \cite[Lemma 1 (ii)]{Sabidussi1964}, and then $\Aut(\Haar(A,S))\cong\Aut(\Haar(D,U))\wr S_C$ by \cite[Theorem]{Sabidussi1959}.  Let $\phi\in S_{\Z_2\times A}$ such that $\phi(\Gamma) = \Haar(A,T)$ for some $T\subseteq A$.  As  $\Gamma$ is a wreath product, so is $\Haar(A,T)$ and $\Aut(\Haar(A,T))$ has a lexi-partition ${\cal F}$ with blocks of the same size as the the blocks of the lexi-partition of $\Gamma$ with respect to the empty graph on $C$.  We then have that ${\cal F}$ is the set of cosets of some subgroup of $A$ of size $\vert C\vert$.  So $\phi(\Gamma)/{\cal F}$ is a Haar graph $\Haar(E,U)$ for some quotient group $E$ of $A$ of order $\vert D\vert$ and $U\subseteq E$.  Thus $E\in{\cal L}$.  By \cite[Lemma 5]{DobsonM2005}, the lexi-partition of $\Gamma$ corresponding to $C$ and the lexi-partition ${\cal F}$ are the only block systems of $\Aut(\Gamma)$ and 
$\Aut(\phi(\Gamma))$, respectively, with blocks of size $\vert D\vert$.  Thus $\Haar(D,U)$ is isomorphic to $\Haar(E,U)$.  Then $\delta_E(\Haar(E,U)) = \Haar(D,V)$ for some $V\subseteq D$.  As ${\cal S}$ is a solving set for $\Haar(D,U)$, there is $s\in{\cal S}$ such that $s\delta_E(\Haar(E,U)) = \Haar(D,V)$.  It is now easy to see that $\Gamma$ and $\phi(\Gamma)$ are isomorphic by any extension of $s\delta_E$ to $\Z_2\times A$ (see \cite[Lemma 4.12]{BarberDpreprint} for a similar proof to this last fact).
\end{proof}


Let $A$ be an abelian group.  Note that if every Sylow subgroup of $A$ is elementary abelian or cyclic, then every quotient of $A$ also has elementary abelian or cyclic Sylow subgroups. In the case that the Sylow subgroup is cyclic, the quotients are equal.  If they are elementary abelian, they need not be equal, but can be made equal by applying an automorphism of $A$ to make the kernels of the action of $\widehat{A}_L$ on the lexi-partitions, and hence the quotients, the same.  As we may assume that solving sets always contain the automorphisms of $A$, we may then set every $\delta_L = 1$ in the statement.  This gives the following result.

\begin{lemma}\label{action unfaithful 3}
Let $A$ be an abelian group such that every Sylow subgroup is elementary abelian or cyclic, $S\subseteq A$ such that $\Haar(A,S)$ is connected, and there exists $C\le A$, chosen to be maximal, such that $\Haar(A,S)\cong \Haar(D,U)\wr \bar{K}_{C}$, where $D = A/C$, and $U$ is a union of left cosets of $C$ in $A$. Let $T$ be a solving set for $\Haar(D,U)$.  Then a solving set for $\Haar(A,S)$ is $$\{t_A:t_A{\rm\ is\ any\ extension\ of\ }t{\rm\ to\ }\Z_2\times A{\rm\ and\ }t\in T\}.$$  
\end{lemma}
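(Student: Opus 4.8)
The plan is to obtain this as a direct specialization of Lemma~\ref{action unfaithful 1}. That lemma already exhibits a solving set for $\Haar(A,S)$ as the set of all extensions to $\Z_2\times A$ of the maps in $T = \{s\delta_L : s\in{\cal S},\ L\in{\cal L}\}$, where ${\cal S}$ is any solving set for $\Haar(D,U)$, ${\cal L}$ is the set of quotient groups of $A$ contained in $\Aut(\Haar(D,U))$, and each $\delta_L$ is an $L$ to $D$ relabeling of $\Haar(D,U)$. So it suffices to show that, under the extra hypothesis that every Sylow subgroup of $A$ is elementary abelian or cyclic, each relabeling $\delta_L$ may be taken to be the identity. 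For then $T$ collapses to ${\cal S}$, which is a solving set for $\Haar(D,U)$, and the statement of the present lemma is recovered verbatim (with $T$ playing the role of ${\cal S}$).

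First I would identify the members of ${\cal L}$. As in the proof of Lemma~\ref{action unfaithful 1}, the lexi-partition associated with an $L\in{\cal L}$ is the set of cosets of some subgroup $K\le A$ with $|K| = |C|$, and $L\cong A/K$. The key step is to produce, for each such $K$, an automorphism $\alpha\in\Aut(A)$ with $\alpha(K) = C$. Writing $A = \bigoplus_p A_p$ as the internal direct sum of its Sylow subgroups and decomposing $K = \bigoplus_p K_p$ and $C = \bigoplus_p C_p$ accordingly, the equality $|K| = |C|$ forces $|K_p| = |C_p|$ for every prime $p$, since the $A_p$ have pairwise coprime orders. If $A_p$ is cyclic it has a unique subgroup of each order, so $K_p = C_p$ and the identity works on $A_p$; if $A_p$ is elementary abelian then $\Aut(A_p)$ acts transitively on the subgroups of any fixed order (equivalently, $GL$ acts transitively on subspaces of a fixed dimension), so there is $\alpha_p\in\Aut(A_p)$ with $\alpha_p(K_p) = C_p$. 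Assembling the $\alpha_p$ coordinatewise yields the desired $\alpha\in\Aut(A)$.

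With $\alpha$ in hand, $\alpha$ descends to an isomorphism $A/K\to A/C = D$, and $\widehat\alpha$ carries the lexi-partition given by the cosets of $K$ to the lexi-partition given by the cosets of $C$, thereby making the kernel of $\widehat{A}_L$ on the lexi-partition equal to $C$ and the corresponding quotient literally $D$. Thus the $L$ to $D$ relabeling is realized by an automorphism of $A$. Since $\widehat\alpha\in\Iso(A)$ and $\Iso(A)$ is contained in every solving set by Definition~\ref{solving set def}(3) (together with Lemma~\ref{biCayley Iso}), the contribution of $\delta_L$ is already accounted for by the automorphisms of $A$ present in the solving set. Hence we may set $\delta_L = 1$ for every $L\in{\cal L}$, so that $T = {\cal S}$ and a solving set for $\Haar(A,S)$ is the set of all extensions of the elements of $T$ to $\Z_2\times A$, as claimed.

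The main obstacle is the second paragraph: verifying that the homocyclic hypothesis on the Sylow subgroups is precisely what transports $K$ onto $C$ by an automorphism of $A$. Care is needed to see that $|K| = |C|$ genuinely forces $|K_p| = |C_p|$ (using the primary decomposition) and that the component automorphisms combine without compatibility issues (using that $A$ is the internal direct sum of the $A_p$). The hypothesis cannot be dropped: if some $A_p\cong\Z_{p^2}\times\Z_p$, then there are subgroups $K$ of order $|C|$ with $A/K\not\cong A/C$, no automorphism carries $K$ to $C$, and the extra relabelings $\delta_L$ genuinely cannot be eliminated.
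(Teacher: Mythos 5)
Your proposal is correct and takes essentially the same approach as the paper: the paper likewise deduces this lemma from Lemma \ref{action unfaithful 1} by observing that cyclic Sylow subgroups have a unique subgroup of each order, that elementary abelian Sylow subgroups admit automorphisms carrying any subgroup of a given order to any other, and that the resulting automorphisms of $A$ lie in $\Iso(A)$ and hence in every solving set, so every $\delta_L$ may be taken to be the identity. One caveat on wording only: the operative hypothesis is ``elementary abelian or cyclic,'' not ``homocyclic'' as your closing paragraph says (e.g.\ $\Z_{p^2}\times\Z_{p^2}$ is homocyclic yet has non-isomorphic subgroups of order $p^2$, so no automorphism transports one to the other), though your actual argument uses the correct hypothesis throughout.
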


We now turn to automorphisms of disconnected vertex-transitive Haar graphs.  It is somewhat ironic that determining isomorphism between these graphs will take the most work of the three cases of Theorem \ref{automorphism main result} that we will consider, as of course isomorphism between disconnected graphs obviously reduces to isomorphism between components.

The automorphism group of a disconnected vertex-transitive Haar graph of a group $G$ always has an imprimitive automorphism group with blocks the set of components of the graph, which are the left cosets of $\Z_2\times H$ for some $H\le G$.  As the graph is a wreath product, the ``natural'' group to consider is $(G/H)\times(\Z_2\times H)$, as this is the set permuted by the wreath product.  However, the group $\Z_2\ltimes\widehat{G}_L$ need not be such a direct product, so in the rest of this section we will consider the vertex set to be $(G/H)\times (\Z_2 \times H)$, but we will consider $\Z_2\ltimes\widehat{G}_L$ in its natural embedding in $S_{G/H}\wr (\Z_2\ltimes S_H)$ given by the Embedding Theorem \cite[Theorem 4.3.1]{Book}, which contains a natural subgroup isomorphic to $\Z_2\times G$ by \cite[Theorem 4.3.5]{Book}.  We begin with a couple of preliminary results, the first of which ``aligns'' the bipartition into a nice form.

\begin{lemma}\label{wreath components}
Let $A$ be an abelian group, and $\emptyset\not = S\subset A$ such that $\Haar(A,S)$ is disconnected.  Let $C$ be the component of $\Haar(A,S)$ that contains $(0,0)$, and let $a\in A$ such that $(1,-a)\in V(C)$.  Then $H = \la(a + S)(a + S)^{-1}\ra$ is a subgroup of $A$, and the components of $\Haar(A,a + S)$ are the left cosets of $\Z_2\times H$ in $\Z_2\times A$.  Consequently, $\Haar(A,a + S) = \bar{K}_m\wr\Haar(H,a + S)$ where $m = \vert A/H\vert$.
\end{lemma}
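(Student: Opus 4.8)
The plan is to first reduce to the situation in which the component of the base vertex already contains both points lying over a single group element, and then to pin down that component by an explicit connectivity computation. Since $S\neq\emptyset$, the graph $\Gamma=\Haar(A,S)$ has edges, and as $\Gamma$ is bipartite with bipartition ${\cal B}$, every edge—and hence every component—meets both $B_0$ and $B_1$. Recalling the definition of $\bar{a}$ from Lemma \ref{normalizer}, the map $\bar{a}^{-1}$ fixes $B_0$ pointwise, sends $(1,-a)$ to $(1,0)$, and carries $\Gamma$ to $\Haar(A,a+S)$. Thus $\bar{a}^{-1}$ sends $C$ to a component of $\Haar(A,a+S)$ containing both $(0,0)$ and $(1,0)$, so I would work from now on with $\Haar(A,a+S)$, whose component of $(0,0)$ contains $(1,0)$; write $T=a+S$ for its connection set.

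Next I would compute that component directly. Tracing alternating paths out of $(0,0)$ and using that $A$ is abelian, a step $B_0\to B_1$ adds an element of $T$ while a step $B_1\to B_0$ subtracts one, so a routine induction on path length shows that the vertices reachable from $(0,0)$ after an even number of steps are exactly the $(0,x)$ with $x\in\langle T-T\rangle$, and those reachable after an odd number of steps are exactly the $(1,y)$ with $y\in T+\langle T-T\rangle$. Setting $H=\langle T-T\rangle=\langle (a+S)(a+S)^{-1}\rangle$, which is a subgroup of $A$ by construction (this is the first assertion), the $B_0$-part of $C$ is $\{0\}\times H$, and its $B_1$-part is a single coset $t_0+H$ for any $t_0\in T$, since $T-t_0\subseteq T-T\subseteq H$. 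Because $(1,0)\in C$, this coset contains $0$ and hence equals $H$; in particular $T=a+S\subseteq H$. Therefore $V(C)=\Z_2\times H$.

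I would then propagate this across the whole vertex set. As $\widehat{A}_L\le\Aut(\Haar(A,a+S))$ is transitive on $B_0$ and maps components to components, the component of $(0,g)=\widehat{g}_L(0,0)$ is $\widehat{g}_L(\Z_2\times H)=\Z_2\times(g+H)$, and as $g$ ranges over $A$ these exhaust the components, which are therefore precisely the $m=|A/H|$ left cosets of $\Z_2\times H$ in $\Z_2\times A$. Finally, since $T=a+S\subseteq H$, the subgraph induced on $\Z_2\times H$ is by definition exactly $\Haar(H,a+S)$, and a disjoint union of $m$ copies of a graph $\Delta$ is $\bar{K}_m\wr\Delta$ under the wreath-product definition, because the empty graph $\bar{K}_m$ contributes no edges between fibres. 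Hence $\Haar(A,a+S)=\bar{K}_m\wr\Haar(H,a+S)$, as required.

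I expect the only real obstacle to be the connectivity computation identifying $V(C)$ with $\Z_2\times\langle T-T\rangle$, and in particular arguing cleanly that both the $B_0$- and $B_1$-parts of $C$ are the full subgroup $H$ rather than merely cosets of it (the point where the hypothesis $(1,0)\in C$ is genuinely used). A secondary nuisance is bookkeeping the translation conventions—$\bar{a}$ versus $\bar{a}^{-1}$, and $a+S$ versus $S-a$—so that the hypothesis $(1,-a)\in V(C)$ is correctly converted into $(1,0)$ lying in the component of $(0,0)$ after translating to $\Haar(A,a+S)$.
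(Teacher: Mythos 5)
Your proof is correct, and it reaches the same destination by a genuinely different route in the key step. The paper first observes that the component vertex sets inside each $B_i$ form a block system of $(\widehat{A}_L)^{B_i}$, hence are cosets of subgroups $H_0,H_1\le A$ (citing the correspondence between block systems of regular abelian groups and subgroups), then uses the automorphism $\iota(i,j)=(i+1,-j)$ to force $H_0=H_1$, and finally cites \cite[Lemma 2.3 (iii)]{DuX2000} to identify $H$ with $\la (a+S)(a+S)^{-1}\ra$. You instead translate first and then carry out a direct path-tracing computation: even-length walks from $(0,0)$ reach exactly $\{0\}\times\la T-T\ra$ and odd-length walks reach $\{1\}\times(T+\la T-T\ra)$, so the hypothesis $(1,0)\in C$ pins the odd side down to $H$ itself and yields $T\subseteq H$ in the same stroke. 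This buys self-containedness (no external citation, no block-system machinery, no need for the $\iota$ argument), at the cost of a slightly longer computation; the paper's version is shorter but leans on two outside results. The remaining steps --- propagating by transitivity of $\widehat{A}_L$ and observing that a disjoint union of $m$ copies of a graph is $\bar{K}_m\wr\Delta$ --- coincide. One further point in your favor: you handle the translation direction correctly, using $\bar{a}^{-1}=\overline{-a}$ (which sends $(1,-a)$ to $(1,0)$ and carries $\Haar(A,S)$ to $\Haar(A,a+S)$), whereas the paper's proof writes $\bar{a}(\Gamma)=\Haar(A,a+S)$, which under the paper's own definition of $\bar{g}$ is off by an inverse; your bookkeeping is the accurate one.
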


\begin{proof}
Let $\Gamma = \Haar(A,S)$.  As $S\not = \emptyset$, $\Gamma$ has edges.  Then each component of $\Gamma$ is bipartite, and its edges must have one endpoint in $B_0$ and one in $B_1$.  This implies that the set of edge endpoints in $V(C)\cap B_0$ and the set of edge endpoints in $V(C)\cap B_1$ is a bipartition of $C$.  As $\widehat{A}_L$ is transitive on $B_0$ and $B_1$, the set of vertices in a component contained in $B_i$ is a block system of $(\widehat{A}_L)^{B_i}$, and so is the set ${\cal C}_i$, $i \in \Z_2$, of cosets of some subgroup $H_i\le  A$ by \cite[Example 2.3.8]{Book}.   As $A$ is abelian, $\Aut(\Haar(A,S))$ contains the map $\iota:\Z_2\times A\to\Z_2\times A$ given by $\iota(i,j) = (i + 1,-j)$, and also interchanges $H_0$ and $H_1$. Thus $-H_1 = H_0$ and $H_1 = H_0$.  In $\bar{a}(\Gamma) = \Haar(A,a + S)$, the component that contains $(0,0)$ also contains $(1,0)$, and so has vertex set $\Z_2\times H$, where $H = H_0 = H_1$.  As $\widehat{A}_L\le\Aut(\Haar(A,a + S))$, the components of $\Haar(A,a + S)$ have vertex sets the left cosets of $\Z_2\times H$ in $\Z_2\times A$.  Finally, as the component of $\Haar(A, a + S)$ that contains $(0,0)$ is connected and is $\Haar(H,a + S)$, by \cite[Lemma 2.3 (iii)]{DuX2000} we have $H = \la (a + S)(a + S)^{-1}\ra$.  Thus $\Haar(A,a+S) = \bar{K}_m\wr\Haar(H,a + S)$.
\end{proof}

Once the components have been ``aligned'' as in the previous result, the isomorphism problem reduces to the isomorphism problem for the components.

\begin{lemma}\label{disconnected main}
Let $A$ be an abelian group, and $\emptyset\not = S\subset A$ such that $\Haar(A,S)$ is disconnected with $H\le A$ such that $\Haar(H,S)$ is a component of $\Haar(A,S)$.  Let $T\subseteq A$ such that $\Haar(A,T)$ is a disconnected Haar graph of $A$ with component $\Haar(H,T)$.  If $\Haar(A,S)$ and $\Haar(A,T)$ are isomorphic, then they are isomorphic by an element of $1_{S_{A/H}}\times {\cal S}$, where ${\cal S}$ is a solving set for $\Haar(H,S)$.
\end{lemma}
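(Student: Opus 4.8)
The plan is to exploit the wreath-product structure of both graphs, push the isomorphism down to a single component, and then lift a solving-set element for the component back to $\Z_2\times A$ diagonally.

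First I would invoke Lemma \ref{wreath components}. Since $\Haar(H,S)$ is a component of $\Haar(A,S)$, the component containing $(0,0)$ also contains $(1,0)$; in the notation of that lemma this is the case $a=0$, so $S\subseteq H$, $H=\la S-S\ra$, and $\Haar(A,S)=\bar{K}_m\wr\Haar(H,S)$ with $m=\vert A/H\vert$. In other words $\Haar(A,S)$ is a disjoint union of $m$ copies of the connected graph $\Haar(H,S)$, indexed by the cosets $A/H$. The identical reasoning gives $T\subseteq H$ and $\Haar(A,T)=\bar{K}_m\wr\Haar(H,T)$ with the same $m=\vert A/H\vert$, since $H$ and $A$ are the same for both. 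I would then fix one set of coset representatives for $A/H$ and use it to identify $\Z_2\times A$ with $(A/H)\times(\Z_2\times H)$, so that over each coset, subtracting its representative in the $A$-coordinate carries the component of $\Haar(A,S)$ (resp. $\Haar(A,T)$) lying over that coset exactly onto $\Haar(H,S)$ (resp. $\Haar(H,T)$) on $\Z_2\times H$. Different choices of representative alter this relabeling only by an element of $\wh{H}_L\le\Aut(\Haar(H,S))$, so the identification is harmless.

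Next I would descend to a single component. All components of $\Haar(A,S)$ are isomorphic to $\Haar(H,S)$, and all components of $\Haar(A,T)$ to $\Haar(H,T)$; since any isomorphism $\Haar(A,S)\to\Haar(A,T)$ carries connected components to connected components, the hypothesis $\Haar(A,S)\cong\Haar(A,T)$ forces $\Haar(H,S)\cong\Haar(H,T)$. As $\Haar(H,T)$ is a Haar graph of $H$ isomorphic to $\Haar(H,S)$, the defining property of a solving set (Definition \ref{solving set def}) produces some $\sigma\in{\cal S}$ with $\sigma(\Haar(H,S))=\Haar(H,T)$.

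Finally I would lift $\sigma$ diagonally. Set $\Phi=1_{S_{A/H}}\times\sigma$, that is $\Phi(xH,(i,h))=(xH,\sigma(i,h))$ in the identified coordinates, which is by construction an element of $1_{S_{A/H}}\times{\cal S}$. Because $\bar{K}_m$ is edgeless there are no edges between distinct cosets, so it suffices to check $\Phi$ fiber by fiber; over each coset $\Phi$ acts (after the relabeling above) precisely as $\sigma$, hence sends the $\Haar(H,S)$-component onto the $\Haar(H,T)$-component over the same coset. Taking the union over all cosets gives $\Phi(\Haar(A,S))=\Haar(A,T)$, the desired isomorphism; if $\sigma$ happens to interchange the two halves of the component bipartition, then $\Phi$ interchanges $B_0$ and $B_1$ globally but still fixes every coset-fiber setwise, so nothing changes. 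The one point requiring care — and thus the main obstacle — is precisely this simultaneity: one must set up the identification $\Z_2\times A\cong(A/H)\times(\Z_2\times H)$ using the same representatives for source and target so that the single permutation $\sigma$ realizes the component isomorphism over every coset at once. It is exactly the alignment hypothesis (that $\Haar(H,S)$ and $\Haar(H,T)$ are genuine components, forcing $S,T\subseteq H$ and a common indexing by $A/H$) that makes the identity on $A/H$ suffice and removes the need for the coset-relabeling maps that appear when the components are not aligned.
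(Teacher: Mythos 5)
Your proposal is correct and takes essentially the same approach as the paper's proof: both use Lemma \ref{wreath components} to identify the components of $\Haar(A,S)$ and $\Haar(A,T)$ with copies of $\Haar(H,S)$ and $\Haar(H,T)$ over $A/H$, deduce $\Haar(H,S)\cong\Haar(H,T)$ so that the solving set ${\cal S}$ supplies $\sigma$ with $\sigma(\Haar(H,S)) = \Haar(H,T)$, and then lift $\sigma$ diagonally to $1_{S_{A/H}}\times\sigma$. The only cosmetic difference is that the paper normalizes an arbitrary isomorphism $\phi$ so that $\phi/{\cal C} = 1$ before restricting to the principal component, whereas you obtain the component isomorphism directly from the fact that isomorphisms carry components to components; both arguments meet at the same point.
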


\begin{proof}
Let $\Gamma = \Haar(A,S)$.  As $\Haar(H,S)$ is a component of $\Gamma$, $\Haar(H,S)$ is connected and as $S\not = \emptyset$, $\Haar(H,S)$ has edges.  Also, $V(\Haar(H,S))$ contains $(0,0)$ and $(1,0)$.  Recall that we are considering $V(\Gamma) \le A/H\times\Z_2\times H$.  By Lemma \ref{wreath components}, the vertex sets of the components of $\Gamma$ are then the left cosets of $\Z_2\times H$ in $A/H\times\Z_2\times H$.

Let $\phi\in S_{A/H\times\Z_2\times H}$ such that $\phi(\Gamma) = \Haar(A,T)$. 
Then $\phi^{-1}\widehat{A}_L\phi\le\Aut(\Gamma)$ and the components of $\phi(\Gamma)$ are the left cosets of $\Z_2 \times H$ in $A/H\times\Z_2\times H$.  Let ${\cal C}$ be the set of components of $\Gamma$.  Then ${\cal C}$ is a block system of $\Aut(\Gamma)$ and, as the components of $\Gamma$ and $\phi(\Gamma)$ are the same, ${\cal C}$ is also a block system of $\Aut(\phi(\Gamma))$.  Hence $\phi\in S_{A/H}\wr S_{\Z_2\times H}$.  As $\Aut(\Gamma) = S_m\wr\Aut(\Haar(H,S))$ where $m = \vert A/H\vert$, we may assume without loss of generality that $\phi/{\cal C} = 1$, so that $\phi(x,y,z) = (x,\phi_x(y,z))$, where each $\phi_x\in S_{\Z_2\times H}$ and $x\in A/H$.  

As ${\cal S}$ is a solving set for $\Haar(H,S)$, there is $s\in {\cal S}$ such that $s(\Haar(H,S)) = \Haar(H,T)$.  Define $s_A:A/H\times\Z_2\times H\to A/H\times\Z_2\times H$ by $s_A(x,y,z) = (x,y,s(z))$.  Then $s_A\in 1_{S_{A/H}}\times{\cal S}$.  As $\widehat{H}_L\le s\Aut(\Haar(H,S))s^{-1}\le\Aut(\Haar(H,T))$ and $\Aut(\phi(\Gamma)) = S_m\wr\Aut(\Haar(H,T))$, by \cite[Theorem 4.3.5]{Book} we see that $\widehat{A}_L/{\cal C}\wr (\Z_2\times H_L)\ge \widehat{A}_L$ and $s_A(\Gamma)$ is a Haar graph of $A$.  As $s(S) = T$, $s_A(\Gamma) = \Haar(A,T)$.  As $s_A\in 1_{S_{A/H}}\times {\cal S}$, the result follows.
\end{proof}

The next example gives a sufficient condition for a group $G$ to not be a BCI-group.

\begin{example}\label{disconnected bad}
Let $G$ be a group, and $H_1,H_2 < G$ such that $\vert H_1\vert = \vert H_2\vert$ and there exists no $\alpha\in\Aut(G)$ such that $\alpha(H_1) = H_2$.  Then $\Haar(G,H_1)\cong\Haar(G,H_2)$, but there is no element of $\Iso(G)$ that maps $\Haar(G,H_1)$ to $\Haar(G,H_2)$, nor is there an element of $\Iso(H_1)$ that maps the component of $\Haar(G,H_1)$ that contains $(0,0)$ to the component of $\Haar(G,H_2)$ that contains $(0,0)$.  
\end{example}

\begin{proof}
The Haar graphs $\Haar(G,H_1)$ and $\Haar(G,H_2)$ are disconnected by \cite[Lemma 2.3 (iii)]{DuX2000} and each component of each graph is isomorphic to $K_{m,m}$, where $m = \vert H_1\vert = \vert H_2\vert$.  Hence $\Haar(G,H_1)$ and $\Haar(G,H_2)$ are isomorphic.  Note that $\tau$ (as in Lemma \ref{normalizer}) and every element of $\bar{G}$ maps $\Z_2\times(G/H_1)$ to itself.  As there is no automorphism of $G$ that maps $H_1$ to $H_2$, by Lemma \ref{biCayley Iso}, no element of $\Iso(G)$ maps $\Haar(G,H_1)$ to $\Haar(G,H_2)$. Note that the component of $\Haar(G,H_1)$ that contains $(0,1_G)$ has vertex set $\Z_2\times H_1$, while the component of $\Haar(G,H_2)$ that contains $(0,1_G)$ has vertex set $\Z_2\times H_2$.  As every element of $\Iso(H_1)$ is a permutation of $\Z_2\times H_1$, the result follows. 
\end{proof}

We now turn to consequences of our previous results for abelian groups.

\begin{corollary}
Let $A$ be an abelian $\ABCI$-group.  Then every Sylow subgroup of $A$ is elementary abelian or cyclic.
\end{corollary}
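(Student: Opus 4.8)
The plan is to prove the contrapositive: if some Sylow subgroup of $A$ is neither cyclic nor elementary abelian, then $A$ is not an $\ABCI$-group. The entire argument is an application of Example \ref{disconnected bad}, so the only real work is to exhibit two proper subgroups of $A$ of equal order that are not interchanged by any automorphism of $A$.

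First I would fix a prime $p$ for which the Sylow $p$-subgroup $P$ of $A$ is neither cyclic nor elementary abelian. Since $A$ is abelian, $P$ is an abelian $p$-group, and by the fundamental theorem of finite abelian groups $P\cong\Z_{p^{e_1}}\times\cdots\times\Z_{p^{e_k}}$. The hypothesis that $P$ is not cyclic forces $k\ge 2$, and the hypothesis that $P$ is not elementary abelian forces some $e_i\ge 2$. Consequently $P$ contains a subgroup $H_1\cong\Z_{p^2}$ (inside a cyclic factor with $e_i\ge 2$) and a subgroup $H_2\cong\Z_p\times\Z_p$ (the product of the order-$p$ subgroups of two distinct factors). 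Both $H_1$ and $H_2$ have order $p^2$, and since $P$ is neither cyclic nor elementary abelian we have $\vert P\vert\ge p^3$, so $H_1$ and $H_2$ are proper subgroups of $A$.

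Next I would observe that $H_1\cong\Z_{p^2}$ and $H_2\cong\Z_p\times\Z_p$ are non-isomorphic as abstract groups, and hence no $\alpha\in\Aut(A)$ can satisfy $\alpha(H_1)=H_2$, since the restriction of an automorphism to a subgroup is an isomorphism onto its image. Example \ref{disconnected bad} then applies directly with $G=A$ and these $H_1,H_2$: it yields $\Haar(A,H_1)\cong\Haar(A,H_2)$ while no element of $\Iso(A)$ maps $\Haar(A,H_1)$ to $\Haar(A,H_2)$. By the definition of an $\ABCI$-group, the existence of two isomorphic Haar graphs of $A$ that are not isomorphic by a map in $\Iso(A)$ shows $A$ is not an $\ABCI$-group, which establishes the contrapositive.

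There is essentially no obstacle here beyond selecting the right witnesses; all the substantive content is already packaged in Example \ref{disconnected bad}. The one point I would verify carefully is that $H_1$ and $H_2$ are genuinely \emph{proper} subgroups of $A$, as this is required to invoke the example; this is exactly what the bound $\vert P\vert\ge p^3$ guarantees, ruling out the degenerate cases $P\cong\Z_{p^2}$ and $P\cong\Z_p\times\Z_p$ in which $P$ would in fact be cyclic or elementary abelian.
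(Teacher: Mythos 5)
Your proof is correct and follows essentially the same route as the paper's: both arguments produce subgroups isomorphic to $\Z_{p^2}$ and $\Z_p\times\Z_p$ inside a Sylow $p$-subgroup that is neither cyclic nor elementary abelian, note that no automorphism of $A$ can carry one to the other since they are non-isomorphic, and conclude via Example \ref{disconnected bad}. Your additional verification that the witnesses are proper subgroups (needed for the hypothesis $H_1,H_2<A$ of that example) is a detail the paper leaves implicit, but it does not constitute a different approach.
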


\begin{proof}
We observe that if $A$ is an abelian group such that a Sylow $p$-subgroup $P$ of $A$ is not cyclic or elementary abelian for some prime divisor $p$ of $\vert A\vert$, then $P$ contains subgroups isomorphic to both $\Z_{p^2}$ and $\Z_p\times\Z_p$.  Of course, no automorphism of any group maps a subgroup $H$ to a nonisomorphic subgroup $K$.  The result follows by Example \ref{disconnected bad}.
\end{proof}

Our next result gives isomorphisms between disconnected Haar graphs of abelian groups in terms of a solving set of the component that contains $(0,0)$.

\begin{corollary}\label{disconnected final}
Let $A$ be an abelian group and $\emptyset\not = S\subset A$ such that $\Haar(A,S)$ is disconnected.  Let $a\in A$ such that $(1,-a)$ is in the same component $C$ of $\Haar(A,S)$ that contains $(0,0)$, and $H = \la(a + S)(a + S)^{-1}\ra$.  The components of $\Haar(A,a + S)$ are the left cosets of $\Z_2\times H$ in $\Z_2\times A$.  Let ${\cal K}$ be the set of subgroups $K$ of $A$ of order $\vert H\vert$ such that $C$ is $K$-semiregular with the orbits of $K$ the bipartition of $C$.  For each $K\in{\cal K}$, let $\omega_K$ be a $K$ to $H$ relabeling of $C$, and $\delta_K:\Z_2\times A\to\Z_2\times A$ that is $\omega_K^{-1}$ on each left coset of $\Z_2\times K$ and maps the set of left cosets of $\Z_2\times K$ in $\Z_2\times A$ to the set of left cosets of $\Z_2\times H$ in $\Z_2\times A$.  An $\ABCI(A,S)$ is $$\{\overline{-b}\widehat{\delta_K}^{-1}t:b\in A, K\in{\cal K}{\rm\ and\ }t\in ((1\times A/H)\times\ABCI(H,a + S))\overline{a}\}.$$  
\end{corollary}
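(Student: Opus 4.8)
The plan is to verify the three conditions of Definition \ref{ABCI def} for the displayed set, which I denote ${\cal T}$; Conditions (1) and (2) are routine and Condition (3) carries all the content. For (1), take $K = H$ (which lies in ${\cal K}$ since, after aligning by $\bar a$, the component $\Haar(H,a+S)$ is $H$-semiregular with orbits its own bipartition), take $\widehat{\delta_H}$ and $\psi\in\ABCI(H,a+S)$ to be the identity, and $b = a$; then $\overline{-a}$ and $\bar a$ cancel and produce the identity of $S_{\Z_2\times A}$. For (2), each factor $\overline{-b}$, $\widehat{\delta_K}$, $\bar a$ preserves ${\cal B}$, and each $\psi$ satisfies the corresponding $B_0$-condition for the component, so any $\theta\in{\cal T}$ with $\theta(0,0)\in B_0$ fixes $(0,0)$.

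For Condition (3), let $\phi\in S_{\Z_2\times A}$ with $\phi^{-1}\widehat{A}_L\phi\le\Aut(\Gamma)$, where $\Gamma = \Haar(A,S)$. First I would dispose of the possibility that $\phi$ scrambles ${\cal B}$. Since $\Gamma$ is a disjoint union of copies of the connected Haar graph $\Haar(H,a+S)$, we have $\Aut(\Gamma)=S_{A/H}\wr\Aut(\Haar(H,a+S))$, and $\Aut(\Haar(H,a+S))$ contains the side-swap $(i,j)\mapsto(i+1,-j)$; hence $\Aut(\Gamma)$ contains automorphisms that independently interchange the two halves of any chosen set of components. Now $\phi\colon\Gamma\to\phi(\Gamma)$ is an isomorphism and ${\cal B}$ is a bipartition of $\phi(\Gamma)$, so $\phi^{-1}({\cal B})$ is a bipartition of $\Gamma$; restricted to any component $D$ it must coincide with the unique bipartition $\{D\cap B_0, D\cap B_1\}$ of the connected bipartite graph $D$. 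Choosing $v_1\in\Aut(\Gamma)$ that interchanges sides on exactly the components where $\phi^{-1}(B_0)$ is ``flipped'', and replacing $\phi$ by $\phi v_1$ (legitimate, since the $v$ in Condition (3) may absorb $v_1$), I may assume $\phi$ preserves ${\cal B}$.

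With $\phi$ preserving ${\cal B}$, the graph $\Gamma' = \phi(\Gamma)$ is a disconnected Haar object of $A$ bipartite with respect to $\{B_0,B_1\}$, so each component meets $B_0$ and $B_1$ in the two parts of its bipartition. The components form a block system of $\widehat{A}_L$, and a short block argument (exactly the one giving $H_0 = H_1$ in Lemma \ref{wreath components}) shows that the stabilizer in $\widehat{A}_L$ of the component $C'$ containing $(0,0)$ is a subgroup $K\le A$ with $|K| = |H|$ acting semiregularly on $C'$ with the two parts of $C'$ as orbits. Transporting along the graph isomorphism $C'\cong C$ exhibits $C$ as $K$-semiregular with orbits its bipartition, i.e. $K\in{\cal K}$. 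Applying $\widehat{\delta_K}$ relabels the $K$-structure to the $H$-structure so the components become cosets of $\Z_2\times H$, and a suitable $\overline{-b}$ shifts so the component through $(0,0)$ also contains $(1,0)$; re-deriving $\widehat{A}_L\le\Aut$ by the Embedding Theorem \cite[Theorem 4.3.5]{Book} as in Lemma \ref{wreath components}, the result is a genuinely aligned Haar graph $\Haar(A,T)$ of $A$ with component $\Haar(H,a+S)$.

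At this point $\bar a(\Gamma) = \Haar(A,a+S)$ and $\Haar(A,T)$ are aligned, isomorphic, disconnected Haar graphs of $A$, so Lemma \ref{disconnected main} yields an isomorphism of the form $1_{S_{A/H}}\times s$ with $s$ in a solving set ${\cal S}$ for the component $\Haar(H,a+S)$. Writing $s = \beta\psi$ with $\beta\in\Iso(H)$ and $\psi\in\ABCI(H,a+S)$ (Lemma \ref{contool1}) and absorbing the translations, the relabeling, and the normalizer factor $\beta$ into a conjugating element of $\Aut(\Gamma)$ together with a final $\Iso(A)$-adjustment exactly as in the proof of Lemma \ref{contool1}, one obtains $v\in\Aut(\Gamma)$ and $\theta = \overline{-b}\,\widehat{\delta_K}^{-1}\,(1_{S_{A/H}}\times\psi)\bar a\in{\cal T}$ with $v^{-1}\phi^{-1}\widehat{A}_L\phi v = \theta^{-1}\widehat{A}_L\theta$, establishing (3). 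The main obstacle is the realignment in the middle two paragraphs: an arbitrary $\phi$ may scramble ${\cal B}$ and may realise the components of $\phi(\Gamma)$ through a subgroup $K\neq H$, and the crux is to show that every such $\phi$ can be brought back, via a side-swapping $v\in\Aut(\Gamma)$, a relabeling $\widehat{\delta_K}$ with $K\in{\cal K}$, and a translation $\overline{-b}$, to the standard aligned situation covered by Lemma \ref{disconnected main}.
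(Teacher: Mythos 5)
Your overall route is the same as the paper's: translate by $\overline{b}$ and relabel by $\widehat{\delta_K}$ so that the components of the image graph become left cosets of $\Z_2\times H$, reduce to the component via Lemma \ref{disconnected main} together with the factorization of a solving set into $\Iso(H)\cdot\ABCI(H,a+S)$ coming from Lemma \ref{contool1}, and then compose with $\overline{a}$. Your extra bookkeeping (checking Conditions (1) and (2) of Definition \ref{ABCI def} explicitly, and producing $K$ as the stabilizer in $\widehat{A}_L$ of the component through $(0,0)$ instead of quoting Lemma \ref{wreath components} for $\phi(\Gamma)$) does not change the substance, and your final absorption of the $\Iso(H)$-factor is glossed over to exactly the same extent as in the paper.

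The difficulty is the step you yourself call the crux. Your reduction to ${\cal B}$-preserving $\phi$ opens with the assertion that ``${\cal B}$ is a bipartition of $\phi(\Gamma)$,'' and everything that follows (uniqueness of the bipartition on each component, the side-swapping $v_1$) is deduced from that assertion. But this assertion is precisely what needs to be proved: the hypothesis only gives $\widehat{A}_L\le\Aut(\phi(\Gamma))$, which makes $\phi(\Gamma)$ a Haar \emph{object} of $A$, not a Haar graph, and by Lemma \ref{connected preserve bipartition} conjugates of $\widehat{A}_L$ whose orbits differ from ${\cal B}$ can occur exactly when the graph is disconnected --- which is the case of this corollary. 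The claim can genuinely fail: for $A=\Z_4$ and $S=\{0\}$, $\Gamma=\Haar(A,S)$ is a perfect matching (here $H=\{0\}$), and the graph $\Gamma'$ on $\Z_2\times A$ with edges $(i,x)(i,x+2)$ is another perfect matching admitting $\widehat{A}_L$; any isomorphism $\phi\colon\Gamma\to\Gamma'$ then satisfies $\phi^{-1}\widehat{A}_L\phi\le\Aut(\Gamma)$, yet $\phi^{-1}(B_0)$ is a union of two edges of $\Gamma$, while every element $t$ of the displayed set carries each edge of $\Gamma$ to a pair meeting both $B_0$ and $B_1$, so the orbit partitions of $v^{-1}\phi^{-1}\widehat{A}_L\phi v$ and $t^{-1}\widehat{A}_Lt$ can never coincide and Condition (3) fails. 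So the reduction you present is circular, and no choice of $v_1$ can rescue it in such cases. In fairness, the paper's own proof makes the identical silent assumption in its very first sentence (``$\phi(\Haar(A,S))=\Haar(A,T)$ for some $T\subset A$''), so your attempt reproduces the published argument's gap rather than introducing a new one; but since you flagged the realignment as the main obstacle, you should know that the argument you give for it does not close it, and any genuine proof (or repair of the statement) has to deal with conjugates of $\widehat{A}_L$ whose orbits are not ${\cal B}$.
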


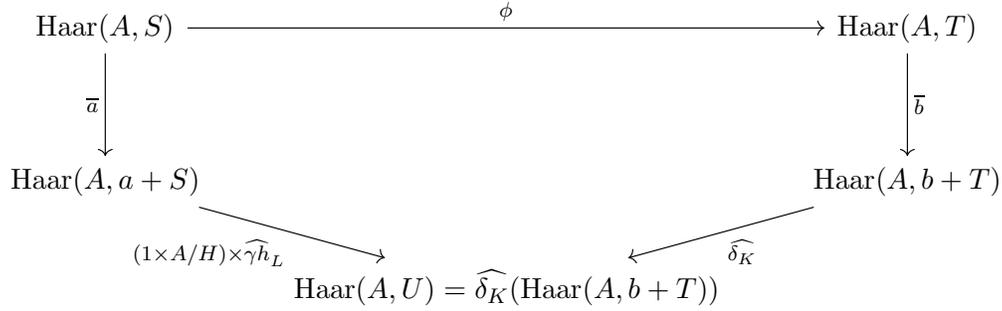
\begin{figure}
    \begin{center}
    \begin{tikzcd}
        \Haar(A,S) \arrow[dd, swap, "\overline{a}"] \arrow[rr, "\phi"] & & \Haar(A,T) \arrow[dd, "\overline{b}"]\\ \\
        \Haar(A,a+S) \arrow[dr, swap, "(1 \times A/H) \times \widehat{\gamma h}_L"] & & \Haar(A,b+T) \arrow[dl, "\widehat{\delta_{K}}"] \\
        & \Haar(A,U) = \widehat{\delta_{K}}(\Haar(A,b+T))  
    \end{tikzcd}
    \end{center}
    \caption{The maps in Corollary \ref{disconnected final}}
    \label{maps}
\end{figure}

\begin{proof}
This proof contains quite a few maps and Haar graphs.  We have included Figure \ref{maps} to help the reader keep them all straight. The information about the components of $\Haar(A, a + S)$ comes from Lemma \ref{wreath components}.  Let $\phi\in S_{\Z_2\times A}$ such that $\phi^{-1}\widehat{A}_L\phi\le\Aut(\Haar(A,S))$.  Then $\phi(\Haar(A,S)) = \Haar(A,T)$ for some $T\subset A$ is a Haar graph of $A$ isomorphic to $\Haar(A,S)$.  Also by Lemma \ref{wreath components}, there is $b\in A$ such that the components of $\phi(\Haar(A,b + T))$ are the left cosets of $\Z_2\times K$ in $\Z_2\times A$, where $K\le A$.  

Next, we show that $\delta_K$ as in the statement exists.  Let ${\cal C}$ be the block system of $\la\tau,\widehat{A}_L\ra$ that is the set of left cosets of $\Z_2\times K$ in $\Z_2\times A$.  As $\vert K\vert = \vert H\vert$, we have $\vert(\Z_2\times A)/(\Z_2\times H)\vert = \vert(\Z_2\times A)/(\Z_2\times K)\vert$, so there is a bijection $\alpha:(\Z_2\times A)/(\Z_2\times K) \to(\Z_2\times A)/(\Z_2\times H)$.  
As the vertex sets of the components of $\Haar(A,b + T)$ are the block system ${\cal C}$, we see that $\fix_{\Aut(\Haar(A,b+T))}({\cal C})^C$ contains a semiregular subgroup with two orbits isomorphic to $H$.  Let $\delta$ be a $K$ to $H$ relabeling of $\Haar(K,b+T)$.  Recall that we are considering $\Z_2\ltimes\widehat{A}_L$ in its natural embedding in $S_{A/H}\wr (\Z_2\ltimes S_H)$ given by the Embedding Theorem \cite[Theorem 4.3.1]{Book}.  Similarly, we consider $\Z_2\ltimes\widehat{A}_L$ as in its natural embedding of $S_{A/K}\wr(\Z_2\ltimes S_K)$ given by the Embedding Theorem.  Then $\delta_K:\Z_2\times A\to \Z_2\times A$ given by $\delta_K(i,j) = (\alpha(i),\delta(j))$ is a bijection from $\Z_2\times A$ to $\Z_2\times A$ that is a $K$ to $H$ relabeling on each left coset of $\Z_2\times K$ in $\Z_2\times A$ and maps the set of left cosets of $\Z_2\times K$ in $\Z_2\times A$ to the set of left cosets of $\Z_2\times H$ in $\Z_2\times A$.

As $\Haar(A,S)$ is disconnected, its automorphism group contains $S_{A/H}\wr (\Z_2\times H_L)$.  As we consider $\Z_2\ltimes\widehat{A}_L$ in its natural embedding in $S_{A/H}\wr (\Z_2\ltimes S_H)$ given by the Embedding Theorem, $S_{A/H}\wr (\Z_2\times H_L)$ contains $\Z_2\times\widehat{A}_L$.  
Thus $\widehat{\delta_K}\overline{b}\phi(\Haar(A,S))$ is a Haar graph $\Haar(A,U)$ for some connection set $U\subset A$. As the vertex sets of the components of $\Haar(A,a + S)$ and $\Haar(A,U)$ are the same, we see that $\Haar(H,a + S)$ and $\Haar(H,U)$ are isomorphic by a map of the form $\gamma h$, where $\gamma\in \Iso(H)$ and $h\in \ABCI(H,S)$.  Again, as the vertex sets of the components of $\Haar(A,a + S)$ and $\Haar(A,U)$ are the same, $\Haar(A,a + S)$ and $\Haar(A,U)$ are isomorphic by a map of the form $(1\times A/H)\times{\gamma h}$ (this is the map that is $\gamma h$ on each component $\Haar(A,S)$, which are also the components of $\Haar(A,U)$).  Hence $\Haar(A,S)$ and $\Haar(A,T)$ are isomorphic by a map of the form $\overline{-b}\widehat{\delta_K}^{-1}((1\times A/H)\times \gamma h)\overline{a}$, and an $\ABCI(A,S)$ is $\{\overline{-b}\widehat{\delta_K}^{-1} t:K\in{\cal K}{\rm\ and\ }t\in ((1\times A/H)\times\ABCI(H,a + S))\overline{a}\}$.  
\end{proof}

In the next section, we will be interested in Haar graphs of $A$ for which every Sylow subgroup is elementary abelian or cyclic.  The next result gives the specific version of the previous result for these groups.

\begin{corollary}\label{disconnected cyclic and ea}
Let $A$ be an abelian group such that every Sylow subgroup of $A$ is elementary abelian or cyclic.  Let $\emptyset \not =  S\subset A$ such that $\Haar(A,S)$ is disconnected, and $a\in A$ such that $(1,-a)$ is in the same component $C$ of $\Haar(A,S)$ that contains $(0,0)$, and $H = \la (a + S)(a + S)^{-1}\ra$.  Then $H\le A$ and an $\ABCI(A,S)$ is $((1\times A/H)\times\ABCI(H,a + S))\overline{a}$.
\end{corollary}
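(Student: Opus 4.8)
The plan is to obtain the statement as a specialization of Corollary \ref{disconnected final}. That result already presents an $\ABCI(A,S)$ as the set of maps $\overline{-b}\,\widehat{\delta_K}^{-1}t$, where $b$ ranges over $A$, $K$ ranges over the set ${\cal K}$ of subgroups of $A$ of order $\vert H\vert$ carrying the required semiregular action on the component $C$, and $t$ ranges over $((1\times A/H)\times\ABCI(H,a+S))\overline{a}$. So the whole task is to show that, under the hypothesis on the Sylow subgroups of $A$, every prefactor $\overline{-b}\,\widehat{\delta_K}^{-1}$ already belongs to $\Iso(A)$; once this is known, each such factor can be absorbed into the $\Iso(A)$ part of the associated solving set (this is exactly the reduction performed by the equivalence-class construction of Lemma \ref{CIX}), collapsing the index set over $b$ and $K$ and leaving precisely $((1\times A/H)\times\ABCI(H,a+S))\overline{a}$.

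The factor $\overline{-b}$ is immediate, since $\overline{-b}\in\bar A\subseteq\Iso(A)$ by the definition of $\Iso(A)$. The content is therefore to control the relabelings $\delta_K$, and the key step is to prove that every $K\in{\cal K}$ is carried onto $H$ by an automorphism of $A$. I would argue this primewise. Since $A$ is abelian, write $A=\bigoplus_p A_p$ as the direct sum of its Sylow subgroups and, accordingly, $K=\bigoplus_p K_p$ and $H=\bigoplus_p H_p$ with $\vert K_p\vert=\vert H_p\vert$ for each prime $p$. If $A_p$ is cyclic then it has a unique subgroup of each order, forcing $K_p=H_p$; if $A_p$ is elementary abelian then $K_p$ and $H_p$ are subspaces of the $\mathbb F_p$-vector space $A_p$ of equal dimension, so some element of $\mathrm{GL}(A_p)$ maps $K_p$ onto $H_p$. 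Assembling these per-prime maps yields an automorphism $\alpha\in\Aut(A)$ with $\alpha(K)=H$.

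Because $\alpha(K)=H$, the induced map $\widehat{\alpha}\colon(i,x)\mapsto(i,\alpha(x))$ both effects a $K$ to $H$ relabeling on each component and carries the cosets of $\Z_2\times K$ onto those of $\Z_2\times H$. Hence $\delta_K$ may be chosen to be (the map induced by) $\widehat{\alpha}$, so that $\widehat{\delta_K}^{-1}\in\widehat{\Aut(A)}\subseteq\Iso(A)$ and therefore $\overline{-b}\,\widehat{\delta_K}^{-1}\in\Iso(A)$. Absorbing these prefactors as described then gives the claimed $\ABCI(A,S)=((1\times A/H)\times\ABCI(H,a+S))\overline{a}$, with the subgroup assertion $H\le A$ being part of the conclusion of Lemma \ref{wreath components}.

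I expect the only real obstacle to lie not in the Sylow argument, which is clean, but in justifying that replacing the abstract relabeling $\delta_K$ by the automorphism-induced $\widehat{\alpha}$ is legitimate inside the machinery of Corollary \ref{disconnected final}, and that pulling an $\Iso(A)$-prefactor into the solving set preserves the defining conditions of an $\ABCI$-extension in Definition \ref{ABCI def} (in particular the base-point condition (2)). The remainder is bookkeeping, which I would present by citing the equivalence-relation reduction already codified in Lemma \ref{CIX}.
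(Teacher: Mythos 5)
Your proposal is correct and follows essentially the same route as the paper's own proof: both specialize Corollary \ref{disconnected final}, argue primewise (cyclic Sylow subgroups force $K_p=H_p$, elementary abelian ones admit a $\mathrm{GL}$-type automorphism) that each relabeling $\delta_K$ can be taken to be induced by an automorphism of $A$, and then absorb the prefactors $\overline{-b}\,\widehat{\delta_K}^{-1}\in\Iso(A)$ to collapse the index set. If anything, your version is slightly more careful than the paper's, which asserts outright that ``each map $\delta_K$ is an automorphism of $A$'' rather than noting, as you do, that $\delta_K$ may be \emph{chosen} to be one.
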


\begin{proof}
Let $\Gamma = \Haar(A,S)$.  Note that as $S\not = \emptyset$, the graph $\Gamma$ has edges.  In $\bar{a}(\Gamma) = \Haar(A,a + S)$, the component $\bar{a}(C)$ that contains $(0,0)$ also contains $(1,0)$.  By Lemma \ref{wreath components}, we see that $\Haar(A,a + S) = \bar{K}_m\wr\Haar(H,a + S)$, where $m$ is the number of components of $\Haar(A,a + S)$, and $\Z_2\times H = V(\bar{a}(C))$.  Also, the vertex sets of the components are the cosets of $\Z_2\times H$ in $\Z_2\times A$.  By Lemma \ref{disconnected main}, an $\ABCI(A,a+S)$ is $\{\overline{-b}\widehat{\delta_K}^{-1} t:K\in{\cal K}{\rm\ and\ }t\in ((1\times A/H)\times\ABCI(H,a + S))\overline{a}\}$.

Let $P$ be a Sylow $p$-subgroup of $H$ for some prime $p$.  If a Sylow $p$-subgroup $P$ of $A$ is cyclic, then $P$ is the unique subgroup of $A$ of order $\vert P\vert$.  Hence any subgroup of $A$ whose order is divisible by $\vert P\vert$ contains $P$.  If a Sylow $p$-subgroup of $A$ is elementary abelian and $K$ is any subgroup of $A$ of order $\vert H\vert$, then $K$ has an elementary abelian Sylow $p$-subgroup, and so there exists an automorphism of $A$ which maps a Sylow $p$-subgroup of $K$ to $P$.  We conclude that each map $\delta_{K}$ as defined in Corollary \ref{disconnected final} is an automorphism of $A$.  Then $\bar{b}\widehat{\delta_K}\in\Iso(A,S)$, an $\ABCI(A,a+S)$ is $(1\times A/H)\times\ABCI(a+S)$, and so an $\ABCI(A,S)$ is $((1\times A/H)\times\ABCI(H,a + S))\overline{a}$.  
\end{proof}

We next consider the $\ABCI$-problem for some abelian groups which have a nice property given in the next definition.

\begin{definition}
A group $G$ is called a {\bf homogeneous group} if each isomorphism between any two isomorphic subgroups of $G$ extends to an automorphism of $G$.
\end{definition}

Li \cite{Li1999} has classified finite homogeneous groups.  In particular, a finite abelian group is homocyclic (that is, a direct product of isomorphic cyclic groups) provided that all of its Sylow subgroups are homocyclic.  

\begin{corollary}\label{disconnected ABCI}
Let $A$ be an abelian homogeneous group so that every Sylow subgroup of $A$ is homocyclic. Then any disconnected Haar graph $\Haar(A,S)$ of $A$ with components isomorphic to an ABCI-graph of a subgroup $H\le A$ is an ABCI-graph of $A$.   
\end{corollary}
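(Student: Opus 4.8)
The plan is to combine the reduction in Corollary~\ref{disconnected final} with the defining property of a homogeneous group. First I would use Lemma~\ref{wreath components} to align the components: choosing $a\in A$ with $(1,-a)$ in the component $C$ of $\Haar(A,S)$ containing $(0,0)$ and setting $H=\la(a+S)(a+S)^{-1}\ra$, the graph $\bar a(\Haar(A,S))=\Haar(A,a+S)$ has components the left cosets of $\Z_2\times H$, with the component through $(0,0)$ equal to $\Haar(H,a+S)$. By hypothesis this component is an ABCI-graph of $H$, so $\ABCI(H,a+S)=\{1\}$, and the factor $((1\times A/H)\times\ABCI(H,a+S))\overline a$ appearing in Corollary~\ref{disconnected final} collapses to $\{\bar a\}$. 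Hence that corollary yields
$$\ABCI(A,S)=\{\overline{-b}\,\widehat{\delta_K}^{-1}\,\bar a : b\in A,\ K\in{\cal K}\},$$
where ${\cal K}$ is the set of subgroups $K\le A$ with $|K|=|H|$ for which $C$ is $K$-semiregular with orbits the bipartition of $C$, and $\delta_K$ is the associated $K$ to $H$ relabeling. Since $\overline{-b},\bar a\in\bar A\subseteq\Iso(A)$, the whole of $\ABCI(A,S)$ lies in $\Iso(A)$ as soon as each $\widehat{\delta_K}$ does, and then $\Haar(A,S)$ is an ABCI-graph of $A$ directly from the definition.

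So the problem reduces to showing that for every $K\in{\cal K}$ the relabeling $\delta_K$ may be taken to be an automorphism of $A$, i.e. $\widehat{\delta_K}\in\widehat{\Aut(A)}\subseteq\Iso(A)$. This is where I would invoke homogeneity. If $K\cong H$ as abstract groups, then some isomorphism $K\to H$ exists, and because $A$ is homogeneous it extends to an automorphism $\alpha\in\Aut(A)$ with $\alpha(K)=H$; this $\alpha$ realizes a $K$ to $H$ relabeling of $C$, so $\delta_K$ can be chosen as $\widehat\alpha$ and lies in $\Iso(A)$. Thus it suffices to prove that every $K\in{\cal K}$ satisfies $K\cong H$.

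Establishing $K\cong H$ is the main obstacle, and it is the step that actually forces the homocyclic (equivalently, by Li's classification, homogeneous) hypothesis. For $K\in{\cal K}$ the connected component $C\cong\Haar(H,a+S)$ is simultaneously a Haar graph of $K$, so $\Aut(C)$ contains semiregular copies of both $H$ and $K$, each with the two parts of $C$ as its orbits. I would generalize the Sylow-by-Sylow reasoning in the proof of Corollary~\ref{disconnected cyclic and ea}, working prime by prime to match a Sylow $p$-subgroup of $K$ with the corresponding Sylow $p$-subgroup of $H$. In the two cases handled there this matching is automatic — a cyclic Sylow is the unique subgroup of its order, and all subgroups of a given order of an elementary abelian group are isomorphic — and the function of the homocyclic hypothesis, together with the ABCI-ness of $C$, is to keep that conclusion available for the intermediate groups $\Z_{p^k}^m$. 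Once $K\cong H$ is known, homogeneity closes the argument exactly as in the previous paragraph.

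I expect the delicate point to be precisely the claim that membership in ${\cal K}$ forces $K\cong H$, rather than the bookkeeping with $\delta_K$, $\overline{-b}$, and $\bar a$, which is routine given Corollary~\ref{disconnected final}. The care is needed because subgroups of equal order in a homocyclic group need not be isomorphic, so $K\cong H$ cannot be deduced from $|K|=|H|$ alone; the ABCI property of the component must be used to rule out the competing isomorphism types of semiregular subgroups of $\Aut(C)$. Making that exclusion rigorous is the heart of the proof, and it is the part I would write most carefully.
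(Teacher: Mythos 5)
Your reduction is set up correctly, and you have put your finger on exactly the right spot: after collapsing the factor from Corollary \ref{disconnected final}, everything hinges on showing $K\cong H$ for every $K\in{\cal K}$, so that homogeneity turns $\delta_K$ into an element of $\Iso(A)$. But that step, which you leave as a sketch, is a genuine gap that cannot be filled: the claim is false under the stated hypotheses. Take $A=\Z_{p^2}\times\Z_{p^2}$ (homocyclic, hence homogeneous), $H=\Z_{p^2}\times\{0\}$ and $S=H$. The component of $\Haar(A,S)$ through $(0,0)$ is $\Haar(H,H)\cong K_{p^2,p^2}$, and it \emph{is} an ABCI-graph of $H$, but only for trivial reasons: the unique Haar graph of $H$ isomorphic to $K_{p^2,p^2}$ is $\Haar(H,H)$ itself. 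Precisely because of this triviality, the ABCI property of the component has no power to exclude competing semiregular subgroups: $\Aut(K_{m,m})\cong S_m\wr\Z_2$ contains semiregular copies of \emph{every} group of order $m$ with the two parts as orbits, so $K=p\Z_{p^2}\times p\Z_{p^2}\cong\Z_p\times\Z_p$ lies in ${\cal K}$ and is not isomorphic to $H$. Indeed $\Haar(A,H)\cong\Haar(A,K)$ (both are $p^2$ disjoint copies of $K_{p^2,p^2}$), while any element of $\Iso(A)$ sends $\Haar(A,H)$ to some $\Haar(A,\alpha(H)-g)$ and hence cannot reach $\Haar(A,K)$; this is exactly the paper's Example \ref{disconnected bad}. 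So no Sylow-by-Sylow argument can rescue $K\cong H$ --- and in fact the corollary as stated fails for this $A$, since $\Haar(A,H)$ satisfies all its hypotheses but is not an ABCI-graph of $A$.

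For comparison, the paper's own proof never confronts the misaligned case that your ${\cal K}$-analysis exposes. It quotes Lemma \ref{disconnected main}, whose hypotheses already require \emph{both} graphs to have components that are cosets of the same $\Z_2\times H$, and it then asserts that $\widehat{\alpha_\beta}\bar h$ maps $C$ onto the component of $\Haar(A,T)$ through $(0,0)$; since $\alpha_\beta$ extends $\beta\in\Aut(H)$ and $h\in H$, this map preserves $\Z_2\times H$, so the assertion silently assumes that component is again $\Z_2\times H$ --- which is precisely what fails in the example above. In that sense your proposal is more careful than the paper's proof: it isolates the true obstruction rather than bypassing it. The hypothesis under which your homogeneity argument does close is the stronger one of Corollary \ref{disconnected cyclic and ea}, namely that every Sylow subgroup of $A$ is elementary abelian or cyclic; there, all subgroups of $A$ of order $\vert H\vert$ are isomorphic, every $\delta_K$ can be taken in $\Aut(A)$, and the rest of your bookkeeping with $\overline{-b}$, $\widehat{\delta_K}^{-1}$ and $\bar a$ goes through as you wrote it.
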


\begin{proof}
Let $\Gamma = \Haar(A,S)$, and $C$ be the component of $\Gamma$ that contains $(0,0)$.  Let $(1,-a)\in V(C)$ for some $a\in A$.  In $\bar{a}(\Gamma) = \Haar(A,a + S)$, the component $\bar{a}(C)$ that contains $(0,0)$ also contains $(1,0)$.  By Lemma \ref{wreath components}, we see that $\Haar(A,a + S) = \bar{K}_m\wr\Haar(H,a + S)$ where $H = \la (a+S)(a+S)^{-1} \ra$.  Here, $m$ is the number of components of $\Haar(A,a + S)$, and $\Z_2\times H = V(\bar{a}(C))$.  Also, the set of components of $\Haar(A,a + S)$ is the set of cosets of $\Z_2\times H$ in $\Z_2\times A$ as $\bar{a}(C)$ contains both $(0,0)$ and $(1,0)$.  By Lemma \ref{disconnected main}, an $\ABCI(A,a+S)$ is $1_{S_{A/H}}\times\ABCI(H,a+S)$.  As $a\in C$ and $\bar{a}\in \Iso(A,a + S)\cap\Iso(A,S)$, we may assume without loss of generality that $a = 0$.  As by hypothesis $C$ is an $\ABCI$-graph of $H$, by Lemma \ref{abelian Haar iso}, we see that $\Haar(H,S)$ is isomorphic to another Haar graph of $H$ if and only if they are isomorphic by a map of the form $\widehat{\beta}\bar{h}$, where $\beta\in\Aut(H)$ and $h\in H$.  As an abelian group is homogeneous if and only if every Sylow subgroup is homocyclic \cite{Li1999}, the automorphism $\beta\in\Aut(H)$ extends to an automorphism $\alpha_{\beta}\in\Aut(A)$.  Of course, $h\in A$, so the map $\bar{h}$ with domain $\Z_2\times H$ extends to the map $\bar{h}$ with domain $\Z_2\times A$.  Now suppose $T\subset A$ such that $\Haar(A,S)\cong \Haar(A,T)$.  Then there exists $b\in A$ such that the component of $\overline{b}(\Haar(A,T))$ that contains $(0,0)$ also contains $(1,0)$.  As above, we may assume without loss of generality that $b = 0$.  Then $\widehat{\alpha}\bar{h}$ maps $C$ to the component of $\Haar(A,T)$ that contains $(0,0)$ and $(1,0)$.  This implies that the components of $\Haar(A,S)$ and $\Haar(A,T)$ are the same, and so $\widehat{\alpha_{\beta}}\bar{h}(\Haar(A,S)) = \Haar(A,T)$.
\end{proof}

\section{Applications}\label{applications}

First, Babai has shown that the dihedral group $D_p$ of order $2p$ is a CI-group with respect to ternary relational structures \cite[Theorem 4.4]{Babai1977}, and so for binary relational structures, and so for graphs. Also, the automorphism group of every Haar graph of a cyclic group contains a regular dihedral subgroup, and so is also a Cayley graph of a dihedral group by \cite[Lemma 4]{Sabidussi1958} or \cite[Theorem 1.2.20]{Book}.  This shows that $\Z_p$ is an ABCI-group, as it is easy to check that all automorphisms of $D_p$ are contained in $\Iso(\Z_p)$.  The solution to the isomorphism problem for Cayley digraphs of some other dihedral groups $D_n$ for special $n$ was given in \cite[Theorem 22]{Dobson2002}.   Similarly, automorphism groups of every Cayley digraph of the dihedral group $D_p$ of order $2p$ are given in \cite[Theorem 3.2]{Dobson2006a}, and so all automorphism groups of all Haar graphs of the group $\Z_p$ are given there as well.  

\begin{definition}
The {\bf canonical double cover of a graph $\Gamma$} is the bipartite graph $B\Gamma$ with $V(B\Gamma) = \Z_2\times V(\Gamma)$ with edge set $\{(0,v)(1,w):vw\in E(\Gamma)\}$.
\end{definition}

If $\Gamma = \Haar(G,S)$, then $(0,g)(1,sg)\in E(\Gamma)$ if and only if $s\in G$, and so if $\Cay(G,S)$ is a graph, $\Haar(G,S)$ is the canonical double cover of $\Cay(G,S)$.  

There has been much interest in graphs $\Gamma$ for which the canonical double cover of a graph has automorphism group $\Z_2\times \Aut(\Gamma)$ \cite{MarusicSS1989,Wilson2008,QinXZ2019,HujdurovicMM2021,HujdurovicMM2021,FernandezH2022,HujdurovicMM2023,HujdurovicM2024}.  It is straightforward to show that bipartite graphs, graphs with twins, and disconnected graphs do not have automorphism groups $\Z_2\times\Aut(\Gamma)$.  This has led to the following definition.

\begin{definition}
Let $\Gamma$ be a graph.  We say that $\Gamma$ is {\bf unstable} if $\Aut(B\Gamma)\not = \Z_2\times\Aut(\Gamma)$.  We call $\Gamma$ {\bf trivially unstable} if $\Gamma$ is connected, not bipartite, twin-free, and unstable. 
\end{definition}

The authors are unaware of an analogue of the canonical double cover for digraphs.  However, as Haar graphs can be constructed using connection sets that give Cayley digraphs that are not graphs, we can think of Haar graphs with connection sets that give Cayley digraphs that are not graphs as such analogues.  As the idea behind a stable graph is that its automorphism group gives the automorphism group of its canonical double cover, Theorem \ref{automorphism main result} motivates the following definition.

\begin{definition}
Let $A$ be an abelian group and $S\subseteq G$.  We say that $\Cay(A,S)$ is {\bf unstable} if $$\Aut(\Gamma) \not = \bar{a}^{-1}\Z_2\ltimes\Aut(\Cay(A,a + S))\bar{a}$$ for some $a\in A$.  We say that $\Cay(A,S)$ is {\bf nontrivially unstable} if it is connected, not biparite, twin-free, and unstable.
\end{definition}

We remark that we are only defining nonstable and nontrivially unstable for Cayley digraphs of abelian groups as it is not clear what the appropriate definition should be for nonabelian groups as at this time there is no analogue of Theorem \ref{automorphism main result} for nonabelian groups, and the appropriate definition for nonabelian groups should be based on such a result.

\begin{problem}
Classify all nontrivially unstable Cayley digraphs of abelian groups.
\end{problem}

The next result shows that the previous definition of a nontrivially unstable Cayley graph is the same in both definitions above.  This is a special case of Dave Morris's Theorem \ref{Daves result} stated below.  We give a self-contained proof as it is not difficult.

\begin{lemma}\label{a = 0}
Let $A$ be an abelian group of odd order and $S\subseteq A$ such that $S = -S$ and $$\Aut(\Haar(A,S)) = \bar{a}^{-1}\Z_2\ltimes\Aut(\Cay(A,S))\bar{a}$$ for some $a\in A$.  Then $a = 0$ and $\Aut(\Haar(A,S)) = \Z_2\times\Aut(\Cay(A,S))$.
\end{lemma}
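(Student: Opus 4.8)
The plan is to build a copy of $\Z_2\times\Aut(\Cay(A,S))$ inside $\Aut(\Haar(A,S))$ directly from the hypothesis $S=-S$, then use the displayed identity purely as a statement about cardinality to force equality, and finally extract $a=0$ from a single computation comparing the canonical swap $\tau$ with its $\bar a$-conjugate.

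First I would exploit $S=-S$. Writing $A$ additively, an edge of $\Haar(A,S)$ is $(0,g)(1,g+s)$ with $s\in S$. Since $S=-S$, the map $\tau(i,j)=(i+1,j)$ sends this edge to $(1,g)(0,g+s)$, which is again an edge, so $\tau\in\Aut(\Haar(A,S))$. Likewise, for each $\sigma\in\Aut(\Cay(A,S))$ the diagonal map $\widehat\sigma(i,j)=(i,\sigma(j))$ preserves edges, and $\tau\widehat\sigma=\widehat\sigma\tau$. As $\tau$ interchanges $B_0,B_1$ while every $\widehat\sigma$ fixes them setwise, $\langle\tau\rangle\cap\{\widehat\sigma:\sigma\in\Aut(\Cay(A,S))\}=1$, so
$$H:=\langle\tau\rangle\times\{\widehat\sigma:\sigma\in\Aut(\Cay(A,S))\}\cong\Z_2\times\Aut(\Cay(A,S))$$
is a subgroup of $\Aut(\Haar(A,S))$ of order $2|\Aut(\Cay(A,S))|$. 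This $H$ is exactly the group the hypothesis calls $\Z_2\ltimes\Aut(\Cay(A,S))$, realized with its natural central swap $\tau$. Conjugation by $\bar a$ is a bijection, so the hypothesis gives $|\Aut(\Haar(A,S))|=2|\Aut(\Cay(A,S))|=|H|$; since $H\le\Aut(\Haar(A,S))$, equality of orders forces $\Aut(\Haar(A,S))=H=\Z_2\times\Aut(\Cay(A,S))$, which is the second assertion. I also record that every bipartition-swapping element of $H$ has the form $\tau\widehat\sigma$, so composing $\tau$ with any swap of $H$ yields a diagonal map $\widehat\sigma$; I will use this ``symmetry'' of the swaps.

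To force $a=0$ I would use that $\bar a\,\Aut(\Haar(A,S))\,\bar a^{-1}=\Z_2\ltimes\Aut(\Cay(A,S))$, whose swaps are symmetric in the above sense. Since $\tau\in\Aut(\Haar(A,S))$, the conjugate $w:=\bar a\tau\bar a^{-1}$ is a swap of $\Z_2\ltimes\Aut(\Cay(A,S))$, hence symmetric, meaning $\tau w$ must induce one and the same permutation of $A$ on $B_0$ and on $B_1$. A direct computation gives $w(0,x)=(1,x-a)$ and $w(1,x)=(0,x+a)$, so $\tau w$ acts as $x\mapsto x-a$ on $B_0$ but as $x\mapsto x+a$ on $B_1$. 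Equality of these two permutations forces $2a=0$, and since $|A|$ is odd this gives $a=0$.

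I expect the genuinely delicate point to be the last step. The order count shows only that $\Aut(\Haar(A,S))$ is abstractly $\Z_2\times\Aut(\Cay(A,S))$; what actually pins down $a$ is the finer fact that the hypothesis realizes the $\Z_2$ of $\Z_2\ltimes\Aut(\Cay(A,S))$ by $\tau$, equivalently that its swap coset is $\tau\{\widehat\sigma\}$. This is precisely what makes the conjugate swap $\bar a\tau\bar a^{-1}$ symmetric and converts the hypothesis into the numerical relation $2a=0$. By contrast the construction of $H$ and the cardinality argument are routine once $S=-S$ is available, and the oddness of $|A|$ enters only to pass from $2a=0$ to $a=0$, exactly as in Lemma \ref{disconnected a is 0}.
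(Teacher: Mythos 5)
Your proof is correct, but it takes a genuinely different route from the paper's. You prove the conclusions in reverse order: first the equality $\Aut(\Haar(A,S))=\Z_2\times\Aut(\Cay(A,S))$, by exhibiting $H=\la\tau\ra\times\{\widehat{\sigma}:\sigma\in\Aut(\Cay(A,S))\}$ inside $\Aut(\Haar(A,S))$ (it is $S=-S$ that puts $\tau$ there) and forcing equality by comparing orders with the hypothesis; and only then $a=0$, by noting that $w=\bar{a}\tau\bar{a}^{-1}$ lies in $\Z_2\ltimes\Aut(\Cay(A,S))$, whose bipartition-swapping elements are all of the form $\tau$ times a diagonal map, while your computation shows $\tau w$ translates by $-a$ on $B_0$ and by $+a$ on $B_1$, so $2a=0$ and oddness gives $a=0$. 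The paper argues in the opposite order and by contradiction: since $S=-S$ gives $\widehat{\iota}\in\Aut(\Haar(A,S))$ (where $\iota(x)=-x$), the element $\widehat{\iota}\bar{a}^{-1}\widehat{\iota}\bar{a}$ fixes $B_0$ pointwise and translates $B_1$ by $\pm 2a$; if $a\neq 0$ this makes $\fix_{\Aut(\Haar(A,S))}({\cal B})$ unfaithful on $B_0$, so $\Aut(\Haar(A,S))$ is a nontrivial wreath product by Theorem 2.8 of \cite{Dobson2022}, contradicting the hypothesized form; the product structure then follows because $(i,j)\mapsto(i+1,-j)$ is always an automorphism and composing it with $\widehat{\iota}$ yields $\tau$. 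Both arguments use the same raw ingredients (the extra automorphism supplied by $S=-S$ and a commutator-type element built from $\bar{a}$), and both lean equally on the concrete realization of $\Z_2\ltimes\Aut(\Cay(A,S))$ with diagonal bipartition-preserving part --- the paper needs it to place $\bar{a}^{-1}\widehat{\iota}\bar{a}$ in $\Aut(\Haar(A,S))$, you need it for the symmetry of the swap coset, so this is not an extra assumption on your side. What your route buys is self-containedness: you avoid the wreath-product recognition theorem entirely, and your counting argument yields the direct-product conclusion without first knowing $a=0$. What the paper's route buys is that the obstruction is localized as unfaithfulness on a block, the same mechanism that drives the case analysis of Theorem \ref{automorphism main result} elsewhere in the paper.
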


\begin{proof}
As $S = -S$, $\Cay(A,S)$ is a graph, and so contains the map $\iota:A\to A$ by $\iota(a) = -a$.  The map $\widehat{\iota}$ is then contained in $\Aut(\Haar(A,S))$ by \cite[Lemma 1.4]{Dobson2022}.  Then $\widehat{\iota}$ and $\bar{a}^{-1}\widehat{\iota}\bar{a}$ are contained in $\Aut(\Haar(A,S))$ and $\widehat{\iota}\bar{a}^{-1}\widehat{\iota}\bar{a}(0,j) = (0,j)$ while $\widehat{\iota}\bar{a}^{-1}\widehat{\iota}\bar{a}(1,j) = (1,j + 2a)$.  As $A$ has odd order, $2a$ is not the identity if $a\not = 0$.  Suppose $a\not = 0$.  Note that as $\Aut(\Haar(A,S)) = \bar{a}^{-1}\Z_2\ltimes\Aut(\Cay(A,S))\bar{a}$, $\Haar(A,S)$ is connected, so ${\cal B}$ is a block system of $\Aut(\Haar(A,S))$. Then the action of $\widehat{\iota}\bar{a}^{-1}\widehat{\iota}\bar{a}$ on $B_0\in{\cal B}$ with $(0,0)\in B_0$ is not faithful.  However, by \cite[Theorem 2.8]{Dobson2022} we see that $\Aut(\Haar(A,S))$ is a nontrivial wreath product, a contradiction.  Thus $a = 0$.  

Finally, we observe that the map $\tau:\Z_2\times A\to\Z_2\times A$ given by $\tau(i,j) = (i+1,-j)$ is always contained in $\Aut(\Haar(A,S))$ (this was implicitly shown in \cite{HladnikMP2002}).  Then $\tau\bar{\iota}(i,j) = (i + 1, j)$ is contained in $\Aut(\Haar(A,S))$.  This then implies that $\Aut(\Haar(A,S)) = \Z_2\times\Aut(\Cay(A,S))$, and the result follows. 
\end{proof}

The next result gives a sufficient condition for the isomorphism problem for Haar graphs of an abelian group $A$ to reduce to the isomorphism problem for either ``smaller'' Haar graphs obtained as a component or quotient, or to the isomorphism problem for its corresponding Cayley graph.  

\begin{theorem}\label{iso main}
Let $A$ be an abelian group, and $S\subseteq A$.  If $\Cay(A,S)$ is not nontrivally unstable, then the isomorphism problem for $\Gamma = \Haar(A,S)$ can be determined from the solution to the isomorphism problem for a nontrivial block quotient graph of $\Gamma$, a component of $\bar{a}(\Gamma)$, where $(1,-a)$ is in the same component of $\Gamma$ as $(0,0)$, or the isomorphism problem for $\Cay(A,a+S)$, where $a\in A$.
\end{theorem}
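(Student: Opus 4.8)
The plan is to run a case analysis on the four families for $\Aut(\Gamma)$ furnished by Theorem \ref{automorphism main result}, using the hypothesis only to eliminate the troublesome fourth family. Families (1)--(3) are each handled by a reduction already proved in this section, and these three reductions are exactly the three alternatives in the statement; by Lemma \ref{contool1} producing a solving set or an $\ABCI$-extension for $\Gamma$ built from one for a smaller object is precisely what ``determined from the isomorphism problem for that object'' means. If $\Gamma$ is disconnected (family (1)), I may assume $\emptyset\neq S$, and Corollary \ref{disconnected final} expresses an $\ABCI(A,S)$ in terms of an $\ABCI(H,a+S)$ for $H=\la(a+S)(a+S)^{-1}\ra$, where $(1,-a)$ lies in the component $C$ of $(0,0)$; since $\Haar(H,a+S)$ is the component $\bar{a}(C)$ of $\bar{a}(\Gamma)$ by Lemma \ref{wreath components}, this is the claimed reduction to a component of $\bar{a}(\Gamma)$. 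If $\Gamma$ is connected but the setwise stabilizer $F$ acts unfaithfully on $B_1$ (family (2)), then $\Gamma\cong\Haar(A/H,S)\wr\bar{K}_\beta$, and Lemma \ref{action unfaithful 1} reduces a solving set for $\Gamma$ to one for the irreducible quotient $\Haar(A/H,S)$, which is the block quotient graph $\Gamma/{\cal C}$. If $\Aut(\Gamma)=\bar{a}^{-1}\Z_2\ltimes\Aut(\Cay(A,a+S))\bar{a}$ (family (3)), then Lemma \ref{Reduction in Case 3} produces an $\ABCI(A,S)$ of the form $\widehat{T}\bar{a}$ from a $\CI(A,a+S)$, giving the reduction to $\Cay(A,a+S)$.

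It then remains to show that the hypothesis forces $\Gamma$ into one of families (1)--(3), i.e.\ that family (4) cannot occur, and I would do this by proving the contrapositive: if $\Gamma=\Haar(A,S)$ lies in family (4), then $\Cay(A,S)$ is nontrivially unstable. I must check that $\Cay(A,S)$ is connected, not bipartite, twin-free, and unstable. Instability is immediate, since family (4) is disjoint from family (3): in (3) the faithful constituents of $F$ on $B_0$ and $B_1$ are intertwined by the $\Z_2$ factor and hence equivalent, whereas (4) is defined by their inequivalence, so $\Aut(\Gamma)$ is not of the stable form for any $a$. For connectedness, family (4) assumes $\Gamma$ connected, so $\la S-S\ra=A$; as $\la S-S\ra\le\la S\ra$, we get $\la S\ra=A$ and $\Cay(A,S)$ is connected. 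For twin-freeness, the translation stabilizer $\Stab(S)=\{t\in A:t+S=S\}$ is a subgroup, and $(0,u),(0,v)$ are twins in $\Gamma$ exactly when $u-v\in\Stab(S)$; the transposition of two such twins lies in $F$ and fixes $B_1$ pointwise, so faithfulness of $F$ on $B_1$ forces $\Stab(S)=1$, whence $\Cay(A,S)$ is twin-free.

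The conceptual crux is this contrapositive, and within it the step I expect to require the most care is the ``not bipartite'' condition, which I would establish by showing directly that a bipartite $\Cay(A,S)$ never yields a connected $\Haar(A,S)$. If $\Cay(A,S)$ is bipartite with $S\neq\emptyset$, then the underlying graph $\Cay(A,S\cup -S)$ admits a homomorphism $\chi\colon A\to\Z_2$ with $\chi(s)=1$ for all $s\in S$; its kernel $K$ is an index-$2$ subgroup with $S\cap K=\emptyset$, so $S$ lies in the nontrivial coset $c+K$. Then $S-S\subseteq(c+K)-(c+K)=K$, giving $\la S-S\ra\le K<A$, so by Lemma \ref{wreath components} (equivalently \cite[Lemma 2.3 (iii)]{DuX2000}) $\Haar(A,S)$ is disconnected, contradicting the connectedness assumed in family (4). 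Hence $\Cay(A,S)$ is not bipartite, all four conditions hold, and $\Cay(A,S)$ is nontrivially unstable. This completes the contrapositive, so under the hypothesis $\Gamma$ lies in family (1), (2), or (3), and the three reductions of the first paragraph finish the proof.
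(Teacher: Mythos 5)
Your proposal is correct and takes essentially the same route as the paper: a case analysis on the four families of Theorem \ref{automorphism main result}, handling families (1), (2), (3) by exactly the same three reductions (Corollary \ref{disconnected final}, Lemma \ref{action unfaithful 1}, and Lemma \ref{Reduction in Case 3}, respectively). The only difference is that you make explicit, via the contrapositive (family (4) forces $\Cay(A,S)$ to be connected, non-bipartite, twin-free, and unstable), why the hypothesis rules out family (4) --- a step the paper compresses into the single sentence ``the only remaining possibility is\dots'' --- and your verification of that step is sound.
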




\begin{proof}
We consider the four cases given by Theorem \ref{abelian Haar iso} separately.  Suppose $\Gamma$ is disconnected and let $\Haar(A,T)\cong \Gamma$.  Let $(1,-a)$ be in the component $C$ of $\Haar(A,T)$ that contains $(0,0)$.  By Corollary \ref{disconnected final}, the isomorphism problem for $\Gamma$ reduces to the isomorphism problem for $\bar{a}(C)$, a subgraph of $\bar{\Gamma}$.  So we assume without loss of generality that $\Gamma$ is connected. If $\Gamma$ is a nontrivial wreath product with an empty graph, then there exists $1\not = C\le A$, chosen to be maximal, such that $\Haar(A,S)\cong \Haar(D,U)\wr \bar{K}_{C}$, where $D = A/C$, and $U$ is a union of left cosets of $C$ in $A$.  Let ${\cal C}$ be the left cosets of $C$ in $\Z_2\times A$.  Let ${\cal L}$ be the set of all quotient groups of $A$ that are contained in $\Aut(\Haar(D,U))$.  For each $L\in{\cal L}$, let $\delta_L:L\to D$ be an $L$ to $D$ relabeling of $\Haar(D,U)$.  Let ${\cal S}$ be a solving set for $\Haar(D,U)$, and $T = \{s\delta_L:s\in {\cal S},L\in{\cal L}\}$.   By Lemma \ref{action unfaithful 1} a solving set for $\Gamma$ is $$\{t_A:t_A{\rm\ is\ any\ extension\ of\ }t{\rm\ to\ }\Z_2\times A{\rm\ and\ }t\in T\}.$$
Thus the isomorphism problem for $\Gamma$ can be determined from the solution to the isomorphism problem for a nontrivial block quotient graph of $\Gamma$.  By Theorem \ref{automorphism main result}, the only remaining possibility is that $\Aut(\Gamma)\cong \bar{a}^{-1}\Z_2\ltimes\Aut(\Cay(A,a + S))\bar{a}$ for some $a\in A$, and by Lemma \ref{Reduction in Case 3}, an $\ABCI(A,a + S)$ is $\widehat{T}\bar{a}$, where $T$ is a $\CI(G,a +S)$.  Then the isomorphism problem for $\Gamma$ can be determined from the solution to the isomorphism problem for $\Cay(A,a+S)$.
\end{proof}

Note that for the smaller Haar graphs of, say, the group $B$ obtained as a quotient or subgroup of $A$ for some $\Haar(A,S)$, we do NOT mean the isomorphism problem among Haar graphs of only the group $B$ as in both cases the isomorphic image may be a Haar graph of some other group, say $C$, under an isomorphism.  Thus we mean that it reduces the problem to the isomorphism problem among all Haar graphs isomorphic to a Haar graph of $B$.  To correct this problem, one would need to know the maps that are $C$ to $B$ relabelings.  But we remind the reader that all such relabelings are isomorphisms if every Sylow subgroup of $A$ is elementary abelian or cyclic, and so for such $A$ we reduce to the isomorphism problem for Haar graphs of the group $B$.

In the case where the isomorphism problem for $\Haar(A,S)$ reduces to the isomorphism problem for $\Cay(A,a + S)$ in Theorem \ref{iso main}, the value of $a$ is given in \cite[Theorem 2.12]{Dobson2022}.

Morris proved the following result \cite[Theorem 1.1]{Morris2021}. 

\begin{theorem}\label{Daves result}
Let $A$ be an abelian group of odd order, and $S\subseteq A$ such that $S = -S$.  If $\Cay(A,S)$ is connected and twin-free, then $\Aut(\Haar(A,S)) = \Z_2\times\Aut(\Cay(A,S))$.
\end{theorem}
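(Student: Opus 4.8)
The plan is to recognize $\Haar(A,S)$ as the canonical double cover $B\Gamma$ of the graph $\Gamma=\Cay(A,S)$ (which is a genuine graph since $S=-S$) and then to run the four-way classification of Theorem \ref{automorphism main result}, showing that the hypotheses force the stable case. Throughout, recall that because $S=-S$ both the block-swap $\mu\colon(i,j)\mapsto(i+1,j)$ and the negation $\widehat{\iota}\colon(i,j)\mapsto(i,-j)$ lie in $\Aut(\Haar(A,S))$, so that $\Z_2\times\Aut(\Cay(A,S))\le\Aut(\Haar(A,S))$ with $\Z_2=\la\mu\ra$ and $\Aut(\Cay(A,S))$ acting diagonally; the whole point is to establish the reverse inclusion. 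We may assume $A$ is nontrivial and $S\ne\emptyset$.

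First I would dispose of families (1) and (2). Since $A$ has odd order it has no subgroup of index $2$, so the connected graph $\Cay(A,S)$ is not bipartite; hence its canonical double cover $\Haar(A,S)$ is connected, ruling out family (1). For family (2), observe that $(0,x)$ and $(0,y)$ are twins in $\Haar(A,S)$ exactly when $x+S=y+S$, i.e. when $y-x$ lies in the translation stabilizer $H=\{h\in A: h+S=S\}$, and the same subgroup $H$ records the twins of $\Cay(A,S)$ (and the twins inside $B_1$); there are no twins across $B_0$ and $B_1$ because their neighbourhoods lie in different parts. Thus $\Cay(A,S)$ twin-free forces $H=0$, so $\Haar(A,S)$ is twin-free, i.e. irreducible. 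Since the graphs of family (2) are always reducible, this case cannot occur.

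Next I would handle family (3), where $\Aut(\Haar(A,S))=\bar a^{-1}\bigl(\Z_2\ltimes\Aut(\Cay(A,a+S))\bigr)\bar a$. Here every element of $F=\fix_{\Aut(\Haar(A,S))}({\cal B})$ has the form $\bar a^{-1}\widehat{\sigma}\bar a$, acting as $\sigma$ on $B_0$ and as $x\mapsto\sigma(x-a)+a$ on $B_1$. Applying this to $\widehat{\iota}\in F$: comparing the $B_0$-actions forces $\sigma=\iota=-1$, and then comparing the $B_1$-actions gives $-x=-x+2a$ for all $x$, so $2a=0$ and hence $a=0$ because $|A|$ is odd. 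Therefore $\Aut(\Haar(A,S))=\Z_2\ltimes\Aut(\Cay(A,S))$ with $F=\Aut(\Cay(A,S))$ diagonal, and since $\mu$ commutes with every diagonal $\widehat{\sigma}$ the product is direct, giving exactly $\Z_2\times\Aut(\Cay(A,S))$. This is the argument of Lemma \ref{a = 0}, carried out for the $a+S$ form.

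This leaves family (4), which is the crux and the main obstacle. Here $F$ acts faithfully on each part but its two transitive constituents $F^{B_0}$ and $F^{B_1}$ are inequivalent permutation representations, and the goal is to show this is impossible under our hypotheses; this is precisely Morris's contribution. The block-swap $\mu$ does not settle the matter on its own: conjugation by $\mu$ only shows $F^{B_1}=F^{B_0}\circ c$, where $c$ is the involutory ``swap'' automorphism of $F$ that exchanges the two actions and fixes the diagonal subgroup $\la\widehat{A}_L,\Aut(\Cay(A,S))\ra$ pointwise, so the two constituents agree up to this outer twist rather than being genuinely equivalent. Rephrasing via two-fold automorphisms, an element of $F$ is a pair $(\alpha,\beta)$ of permutations of $A$ with $y-x\in S\iff\beta(y)-\alpha(x)\in S$, and family (4) is exactly the assertion that a non-diagonal such pair exists; one must show every two-fold automorphism is trivial. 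The plan would be to normalize $\alpha(0)=\beta(0)=0$ using $\widehat{A}_L$, exploit that $A$ is abelian to analyze the maps coordinatewise against the symmetric set $S=-S$, and use the absence of $2$-torsion (odd order) together with twin-freeness to force $\alpha=\beta\in\Aut(\Cay(A,S))$, thereby producing the equivalence that contradicts family (4). I expect essentially all the difficulty to be concentrated here, since the paper itself cites Morris for precisely the nonexistence of this fourth family, whereas the preceding reductions are routine consequences of Theorem \ref{automorphism main result} and Lemma \ref{a = 0}.
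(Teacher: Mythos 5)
There is a genuine gap, and it is exactly where you predicted it would be: family (4). Your reductions of families (1)--(3) are correct --- connectedness plus odd order rules out bipartiteness of $\Cay(A,S)$ and hence disconnectedness of $\Haar(A,S)$; the twin computation $x+S=y+S$ correctly transfers twin-freeness from $\Cay(A,S)$ to $\Haar(A,S)$ and rules out the reducible family (2); and your normalization forcing $a=0$ in family (3) is essentially the paper's own Lemma \ref{a = 0}. But these are the routine parts. The entire content of the theorem is the assertion that family (4) of Theorem \ref{automorphism main result} cannot occur under these hypotheses, and for that you offer only a plan (``normalize $\alpha(0)=\beta(0)=0$, analyze coordinatewise against $S=-S$, use odd order and twin-freeness''), not an argument. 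Nothing in that sketch explains how oddness of $|A|$ actually forces a non-diagonal two-fold automorphism $(\alpha,\beta)$ to be diagonal; note that nontrivially unstable Cayley graphs of abelian groups of even order do exist, so the argument must use the odd-order hypothesis in an essential and non-formal way, and no such mechanism is identified.

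You should also be aware that the paper does not prove this statement at all: it is quoted verbatim as Dave Morris's theorem (\cite[Theorem 1.1]{Morris2021}), and the surrounding text makes clear that the nonexistence of the fourth family for $S=-S$ and odd order is precisely Morris's contribution, obtained by a substantial separate analysis. So your proposal, as written, amounts to reducing the theorem to the theorem: the three easy families are dispatched, and the statement that remains to be proved is equivalent to what you started with. To close the gap you would either have to cite \cite{Morris2021} for the elimination of family (4) --- which is what the paper does, and which would make your write-up a correct but citation-dependent proof --- or reproduce Morris's argument, which is far beyond the one-paragraph sketch given.
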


For us, Morris's result shows that the sufficient condition given in Theorem \ref{iso main} holds for Haar graphs of odd order for which $S = -S$.  This gives the following result.

\begin{corollary}
Let $A$ be an abelian group of odd order, and $S\subseteq A$ such that $S = -S$.  Then the solution to the isomorphism problem for $\Haar(A,S)$ can be determined from the solution to the isomorphism problem for a nontrivial block quotient graph, component, or the isomorphism problem for $\Cay(A,S)$.
\end{corollary}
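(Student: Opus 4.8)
The plan is to check that the single hypothesis of Theorem~\ref{iso main} --- that $\Cay(A,S)$ is not nontrivially unstable --- is automatically met in this setting, and then to read off the conclusion after pinning down the parameter $a$. So first I would show that, for $A$ abelian of odd order with $S = -S$, the graph $\Cay(A,S)$ is never nontrivially unstable. Suppose for contradiction that it is; then by definition $\Cay(A,S)$ is connected, not bipartite, twin-free, and unstable. Because it is connected and twin-free, Morris's Theorem~\ref{Daves result} applies and gives $\Aut(\Haar(A,S)) = \Z_2\times\Aut(\Cay(A,S))$. This is precisely the ``stable'' form $\bar{a}^{-1}\Z_2\ltimes\Aut(\Cay(A,a+S))\bar{a}$ taken with $a = 0$: since $S = -S$, the map $(i,j)\mapsto(i+1,j)$ lies in $\Aut(\Haar(A,S))$ (exactly as in the proof of Lemma~\ref{a = 0}), and it commutes with the copy of $\Aut(\Cay(A,S))$ fixing each block of ${\cal B}$, so the semidirect product is in fact direct. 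Hence $\Cay(A,S)$ is stable, contradicting its instability. Therefore $\Cay(A,S)$ is not nontrivially unstable.

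With the hypothesis secured, I would invoke Theorem~\ref{iso main} essentially verbatim: the isomorphism problem for $\Gamma = \Haar(A,S)$ is determined by the isomorphism problem for a nontrivial block quotient graph of $\Gamma$, for a component of $\bar{a}(\Gamma)$ (with $(1,-a)$ in the component of $(0,0)$), or for $\Cay(A,a+S)$. It then remains only to eliminate the $a$ in order to match the stated form. In the Cayley case, the relevant $a$ is the one produced by Theorem~\ref{automorphism main result}(\ref{third}), and Lemma~\ref{a = 0} forces $a = 0$, so $\Cay(A,a+S) = \Cay(A,S)$. In the disconnected case, $\bar{a}$ is itself an isomorphism of $\Gamma$ onto $\bar{a}(\Gamma)$, so every component of $\bar{a}(\Gamma)$ is isomorphic to a component of $\Gamma$; thus the reduction is to the isomorphism problem for a component of $\Gamma$. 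The wreath-product (connected, reducible) case already produces a nontrivial block quotient graph with no reference to $a$. Assembling the three cases yields the corollary.

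The real content is carried by the verification that $\Cay(A,S)$ is not nontrivially unstable, and this is immediate once Morris's Theorem~\ref{Daves result} is granted, since that theorem says exactly that connected, twin-free Cayley graphs of odd-order abelian groups are stable. The only point requiring genuine care is the bookkeeping around the definition of instability: matching the semidirect-product form $\Z_2\ltimes$ appearing there against the direct-product conclusion $\Z_2\times$ of Morris, and confirming through Lemma~\ref{a = 0} that these coincide with $a = 0$. No new combinatorial input is needed beyond what is already assembled in Theorem~\ref{iso main}.
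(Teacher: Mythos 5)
Your proposal is correct and takes essentially the same route as the paper: the paper likewise observes that Morris's Theorem \ref{Daves result} guarantees the hypothesis of Theorem \ref{iso main} (equivalently, that case (\ref{fourth}) of Theorem \ref{automorphism main result} cannot occur for $\Haar(A,S)$ or its relevant quotient), and then simply invokes Theorem \ref{iso main}. Your extra bookkeeping pinning down $a=0$ via Lemma \ref{a = 0} in the Cayley case, and passing from components of $\bar{a}(\Gamma)$ to components of $\Gamma$ in the disconnected case, is consistent with how the paper handles the shift elsewhere and introduces no gap.
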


We remark that if one is only interested in the previous result, it can be obtained from the automorphism group of the canonical double cover of Cayley graphs of cyclic groups of odd order, which are given in \cite{Morris2021}.

The isomorphism problem for Cayley graphs of abelian groups has received considerable attention, but the number of groups for which the isomorphism problem has been solved is somewhat limited.  However, the isomorphism problem has been solved for Cayley graphs of cyclic groups \cite{Muzychuk2004}, giving the following result.

\begin{corollary}\label{cyclic solution}
Let $A$ be a cyclic group of odd order.  Let $S\subseteq A$ such that $S = -S$.  Then a solving set for $\Haar(A,S)$ is known.
\end{corollary}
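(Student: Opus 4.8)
The plan is to argue by strong induction on $|A|$, using the trichotomy of Theorem \ref{iso main} to peel off one reduction at each step, with Muzychuk's solution \cite{Muzychuk2004} of the cyclic Cayley graph isomorphism problem serving as the terminal case and Morris's Theorem \ref{Daves result} guaranteeing that the hypotheses of Theorem \ref{iso main} are met. First I would verify that $\Cay(A,S)$ is never nontrivially unstable here: since $|A|$ is odd, $A$ has no index-two subgroup, so $\Cay(A,S)$ is never bipartite; if it is also connected and twin-free, then $S=-S$ and Theorem \ref{Daves result} give $\Aut(\Haar(A,S))=\Z_2\times\Aut(\Cay(A,S))$, so it is stable, and otherwise it is disconnected or reducible and hence excluded from ``nontrivially unstable'' by definition. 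Thus Theorem \ref{iso main} applies and reduces the isomorphism problem for $\Gamma=\Haar(A,S)$ to one of: a nontrivial block quotient graph, a component of $\bar a(\Gamma)$, or the isomorphism problem for $\Cay(A,a+S)$.

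Next I would record that the relevant smaller objects inherit the hypotheses. Because $A$ is cyclic, every subgroup and every quotient of $A$ is again cyclic of odd order, and for each order there is a \emph{unique} subgroup; since $S=-S$, the image of $S$ in any quotient is inverse-closed and the restriction of $S$ to any subgroup containing it is inverse-closed. I would then treat the three branches. If $\Gamma$ is disconnected, Lemma \ref{disconnected a is 0} lets me take $a=0$, and Corollary \ref{disconnected cyclic and ea} expresses an $\ABCI(A,S)$ through an $\ABCI(H,S)$, where $\Haar(H,S)$ is the component containing $(0,0)$ and $H=\langle (S)(S)^{-1}\rangle<A$; since $H$ is cyclic of odd order with $|H|<|A|$ and $S=-S$ in $H$, the inductive hypothesis supplies a solving set for $\Haar(H,S)$, which Lemma \ref{CIX} and Lemma \ref{contool1} turn into a solving set for $\Gamma$. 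If $\Gamma$ is connected and a nontrivial wreath product $\Haar(D,U)\wr\bar K_C$ with $D=A/C$, then Lemma \ref{action unfaithful 3} shows a solving set for $\Gamma$ consists of extensions of a solving set for $\Haar(D,U)$; here $D$ is cyclic of odd order with $|D|<|A|$ and $U=-U$, so induction applies directly. Finally, if $\Gamma$ is connected and not such a wreath product, then $\Cay(A,S)$ is connected and twin-free, so by Theorem \ref{Daves result} and Lemma \ref{a = 0} we are in Theorem \ref{automorphism main result} (\ref{third}) with $a=0$; Lemma \ref{Reduction in Case 3} then gives that $\widehat T$ is an $\ABCI(A,S)$ where $T$ is a $\CI(A,S)$, and such a $\CI(A,S)$ is extracted from Muzychuk's solving set for $\Cay(A,S)$ by the Cayley analogue of Lemma \ref{CIX} from \cite{Dobson2014}. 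In every branch a solving set for $\Gamma$ is known, completing the induction.

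The step I expect to be the main obstacle is the bookkeeping that guarantees the recursion closes, namely that the quotient and the component to which we recurse are \emph{genuinely} Haar graphs of cyclic groups of odd order carrying inverse-closed connection sets, rather than Haar graphs of some other group reached through a relabeling. This is exactly the point where the restriction to cyclic groups is essential: because every Sylow subgroup of $A$ is cyclic and hence each subgroup of $A$ is the unique one of its order, the relabeling maps $\delta_L$ and $\delta_K$ appearing in Lemma \ref{action unfaithful 3} and Corollary \ref{disconnected cyclic and ea} are forced to be automorphisms of $A$, so they may be absorbed into $\Iso(A)$ and the reductions land on Haar graphs of honest subgroups and quotients of $A$. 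Confirming these relabelings are automorphisms, and that the connectivity/twin-freeness dictionary between $\Cay(A,S)$ and $\Haar(A,S)$ behaves as claimed for $|A|$ odd, is the delicate part; the remaining arguments are then straightforward applications of the cited lemmas.
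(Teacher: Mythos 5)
Your proposal is correct and follows essentially the same route as the paper: the paper obtains Corollary \ref{cyclic solution} by combining the preceding corollary (the reduction trichotomy of Theorem \ref{iso main}, made applicable by Morris's Theorem \ref{Daves result}) with Muzychuk's solution \cite{Muzychuk2004} for circulants, relying on the remark after Theorem \ref{iso main} that for cyclic groups the relabelings $\delta_L$, $\delta_K$ are automorphisms so the recursion stays within Haar graphs of cyclic groups of odd order with inverse-closed connection sets. Your strong induction merely makes explicit the recursion that the paper leaves implicit, and your identification of the relabeling issue as the crucial point is exactly the paper's own reading of why the cyclic hypothesis suffices.
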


The isomorphism problem has been solved for Cayley digraphs of the following noncyclic abelian groups of odd order:  $\Z_p^k$, $1\le k\le 5$, \cite{Babai1977,Godsil1983,Dobson1995,Morris2015,HirasakaM2001,FengK2018}, $\Z_p\times\Z_{p^2}$ \cite{Dobson2000}, $\Z_p\times\Z_q^2$, \cite{KovacsM2009}, $\Z_p\times\Z_q^3$ \cite{SomlaiM2021}, $\Z_p^2\times\Z_q^2$ and $\Z_p^2\times\Z_n$ \cite{KovacsMPPRS2023}, where $q$ and $p$ are distinct primes and $n$ is a square-free integer.  Also, the isomorphism problem has been solved for Cayley digraphs of some groups, satisfying certain arithmetic conditions, such that every Sylow subgroup is elementary abelian of order at most $p^5$ or is cyclic \cite{Dobson2018a}.  The isomorphism problem for Haar graphs with self-inverse connection sets can now be solved from the isomorphism problem for Cayley graphs for the groups given above using results in this paper (we omit the various statements for all of these groups) and the papers cited earlier in this paragraph, with the exception of $\Z_p\times\Z_{p^2}$.  For $\Z_p\times\Z_{p^2}$ all that is missing is appropriate $\Z_p^3$ to $\Z_p\times\Z_{p^2}$ labelings.

While Theorem \ref{iso main} in a very real sense solves the isomorphism problem for Haar graphs of abelian groups of odd order whose connection set is self-inverse, it does not reduce the ABCI problem to the CI problem.  There are two obstacles to this, with the obstacles coming from graphs that have twins or are disconnected.  What can now be done is the next result.

\begin{corollary}
Let $A$ be an abelian group of odd order such that every Sylow subgroup of $A$ is elementary abelian or cyclic.  Let $S\subseteq A$ such that $S = -S$ and $\Cay(A,S)$ is a CI-graph. Then $\Haar(A,S)$ is an ABCI-graph. 
\end{corollary}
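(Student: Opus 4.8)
The plan is to induct on $|A|$ and to split into three cases according to whether $\Cay(A,S)$ is (i) connected and twin-free, (ii) connected with twins, or (iii) disconnected. Since $A$ has odd order it has no index-two subgroup, so $\Cay(A,S)$ is never bipartite; as $\Haar(A,S)$ is the canonical double cover of $\Cay(A,S)$ (because $S=-S$), this makes $\Cay(A,S)$ connected if and only if $\Haar(A,S)$ is connected, and $\Cay(A,S)$ reducible (having twins) if and only if $\Haar(A,S)$ is reducible. Thus the three cases for $\Cay(A,S)$ correspond exactly to cases (\ref{first})--(\ref{third}) of Theorem \ref{automorphism main result} for $\Haar(A,S)$, with Morris's Theorem \ref{Daves result} excluding the difficult family (\ref{fourth}) in case (i). I also record one convenient reduction: for a connection set giving a graph, being a CI-graph and being a CI-digraph coincide, since any Cayley digraph isomorphic to a graph is itself a graph, so no genuinely directed connection set ever enters the comparison; hence the hypothesis ``$\Cay(A,S)$ is a CI-graph'' may be used as ``CI-digraph'' wherever needed. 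In case (i), Theorem \ref{Daves result} gives $\Aut(\Haar(A,S)) = \Z_2\times\Aut(\Cay(A,S))$, which is Theorem \ref{automorphism main result} (\ref{third}) with $a=0$ (consistent with Lemma \ref{a = 0}); since $\Cay(A,S)$ is a CI-digraph, Corollary \ref{Stable CI} immediately gives that $\Haar(A,S)$ is an $\ABCI$-graph. This is the base of the induction.

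In case (ii), let $C=\{c\in A:c+S=S\}$ be the nontrivial subgroup of translations fixing $S$, chosen maximal, so that $\Haar(A,S)\cong\Haar(D,U)\wr\bar{K}_{C}$ with $D=A/C$ and $U=S/C$, as in Theorem \ref{automorphism main result} (\ref{second}). Then $D$ has odd order with Sylow subgroups again elementary abelian or cyclic, $U=-U$, and $\Cay(D,U)$ is connected and twin-free by maximality of $C$; moreover $\Cay(D,U)$ is a CI-graph of $D$, since a quotient of a CI-graph is a CI-graph \cite{DobsonM2015a}. Case (i) applied to $D$ (equivalently, the inductive hypothesis, as $|D|<|A|$) shows $\Haar(D,U)$ is an $\ABCI$-graph, so $\Iso(D)$ is a solving set for it. By Lemma \ref{action unfaithful 3}, which applies precisely because every Sylow subgroup of $A$ is elementary abelian or cyclic, a solving set for $\Haar(A,S)$ is obtained by extending the maps of a solving set for $\Haar(D,U)$ to $\Z_2\times A$. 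Since $A$ is homogeneous (Li's theorem, as all Sylow subgroups are homocyclic), every automorphism and every translation of $D=A/C$ lifts to $A$, so these extensions lie in $\Iso(A)$; hence $\Iso(A)$ is a solving set and $\Haar(A,S)$ is an $\ABCI$-graph.

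In case (iii), $\Haar(A,S)$ is disconnected, and by Lemma \ref{disconnected a is 0} we may take the aligning element $a=0$, so the component of $\Haar(A,S)$ containing $(0,0)$ is $\Haar(H,S)$ with $H=\la S\ra$, the double cover of the connected component $\Cay(H,S)$ of $\Cay(A,S)$. The crux is to show $\Cay(H,S)$ is a CI-graph of $H$. Writing $\Aut(\Cay(A,S))=S_{A/H}\wr\Aut(\Cay(H,S))$ and identifying $A$ with $A/H\times H$ as a set, one checks that every block-to-block constituent of $A_L$ is a translation of $H$, that is, lies in $H_L$. Hence, given $\psi\in S_H$ with $\psi^{-1}H_L\psi\le\Aut(\Cay(H,S))$, conjugation of $A_L$ by the map acting as $\psi$ on every block again lands in $S_{A/H}\wr\Aut(\Cay(H,S))$ and is regular; CI-ness of $\Cay(A,S)$ conjugates it back to $A_L$ inside this wreath product, and restricting the conjugacy to a single block produces $\theta\in\Aut(\Cay(H,S))$ with $\theta^{-1}(\psi^{-1}H_L\psi)\theta=H_L$. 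By Lemma \ref{Ciso}, $\Cay(H,S)$ is a CI-graph of $H$. As $H\le A$ has odd order with Sylow subgroups elementary abelian or cyclic and $S=-S$, the inductive hypothesis (as $|H|<|A|$) gives that $\Haar(H,S)$ is an $\ABCI$-graph of $H$; since $A$ is homogeneous with homocyclic Sylow subgroups, Corollary \ref{disconnected ABCI} then yields that $\Haar(A,S)$ is an $\ABCI$-graph, completing the induction.

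The main obstacle is case (iii), specifically the claim that the connected component $\Cay(H,S)$ inherits the CI property. Unlike the reducible case, where quotient-closure of CI is available off the shelf \cite{DobsonM2015a}, here the conjugacy of regular subgroups must be produced by hand, using the explicit wreath-product form of $\Aut(\Cay(A,S))$ and the fact that the inter-block constituents of $A_L$ are translations. The remaining bookkeeping---that extensions and lifts of maps in $\Iso(D)$ or $\Iso(H)$ stay inside $\Iso(A)$---is exactly what the homogeneity of $A$ (equivalently, the elementary-abelian-or-cyclic Sylow hypothesis) is designed to supply.
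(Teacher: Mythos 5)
Your proposal is correct, and it follows the paper's own three-case strategy: Morris's Theorem \ref{Daves result} plus Corollary \ref{Stable CI} when $\Cay(A,S)$ is connected and twin-free, quotient-closure of the CI property together with Lemma \ref{action unfaithful 3} and lifting of automorphisms when $\Cay(A,S)$ is connected with twins, and Corollary \ref{disconnected ABCI} when it is disconnected. The genuine difference is in the disconnected case. The paper re-runs its first two paragraphs on the component $\Haar(H,S)$, which tacitly requires that $\Cay(H,S)$ be a CI-graph of $H = \langle S\rangle$ --- a fact the paper never justifies; you supply this step explicitly, showing by a wreath-product argument (the inter-block constituents of $A_L$ are translations of $H$, so a conjugator $\psi$ of $H_L$ extends diagonally to a conjugator of $A_L$ inside $S_{A/H}\wr\Aut(\Cay(H,S))$, and CI-ness of $\Cay(A,S)$ followed by restriction to a single block yields the required conjugacy in $\Aut(\Cay(H,S))$ via Lemma \ref{Ciso}) that components of CI-graphs inherit the CI property, and you then close the case by induction on $\vert A\vert$ rather than by ad hoc case re-use. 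This is a real improvement: it makes rigorous the one step the paper leaves implicit, and the inductive organization is cleaner; your observation that CI-graph and CI-digraph coincide for $S=-S$ is also a useful bridge, since Corollary \ref{Stable CI} is stated for CI-digraphs. Two small corrections of emphasis: your citation of Lemma \ref{disconnected a is 0} for taking $a=0$ is the apt one (the paper's appeal to Lemma \ref{a = 0} at that point is a misreference, as that lemma concerns the stable connected case); and your appeal to homogeneity to lift automorphisms of $D=A/C$ to $A$ is not quite the right tool --- homogeneity extends isomorphisms between \emph{subgroups}, not automorphisms of quotients --- though the needed lifting does hold, prime by prime, under the elementary-abelian-or-cyclic Sylow hypothesis, which is the same tacit fact the paper invokes at the same spot.
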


\begin{proof}
By Theorem \ref{Daves result} we need only consider when $\Haar(A,S)$ has automorphism group $\Z_2\times\Aut(\Cay(A,S))$, $\Cay(A,S)$ has a twin, or $\Cay(A,S)$ is disconnected.  If $\Aut(\Haar(A,S)) = \Z_2\times\Aut(\Cay(A,S))$ and $\Cay(A,S)$ is a CI-graph, then by Lemma \ref{Stable CI} we have that $\Haar(A,S)$ is an $\ABCI$-graph.  

If $\Cay(A,S)$ is connected and has a twin, then $\Haar(A,S)$ has a twin and is connected as $\vert A\vert$ is odd (and so $\Cay(A,S)$ is not bipartite). Thus $\Haar(A,S)$ satisfies Theorem \ref{automorphism main result} (2).  Then there exists $H\le A$, chosen to be maximal, such that $\Haar(A,S)\cong \Haar(D,U)\wr \bar{K}_{H}$, where $D = A/H$, and $U$ is a union of left cosets of $H$ in $A$.  As $H$ is maximal, $\Haar(D,U)$ has no twins, and as it is the block quotient of a connected graph, it is connected.  As $A$ has odd order, $D$ has odd order, and we see by Theorem \ref{Daves result} that $\Aut(\Haar(D,U)) = \Z_2\times\Aut(\Cay(D,U))$, and $\Cay(D,U)$ is not unstable.  As by \cite{DobsonM2015a} the quotient of a CI-graph is a CI-graph, we see that $\Cay(D,U)$ is a CI-graph. Hence a $\CI(D,U)$ is $\Aut(D)$.  By Lemma \ref{action unfaithful 3}, an $\ABCI(D,U)$ is $T = \widehat{\Aut(D)}$.  As every Sylow subgroup of $A$ is elementary abelian or cyclic, we may choose an $\ABCI(A,S)$ to be $T_A\le\Aut(A)$.  Then $\ABCI(A,S)\le\widehat{\Aut(A)}\le\Iso(A,S)$, and $\Haar(A,S)$ is an ABCI-graph by definition.

If $\Cay(A,S)$ is disconnected, by Corollary \ref{disconnected cyclic and ea} an $\ABCI(A,S)$ is $((1\times A/H)\times\ABCI(H,a + S))\overline{a}$, where $(1,-a)$ is in the component of $\Haar(A,S)$ that contains $(0,0)$.  By Lemma \ref{a = 0}, we have $a = 0$ and so $\overline{a} = 1$.  Hence an $\ABCI(A,S)$ is $(1\times A/H)\times\ABCI(H,S)$.  If $\Haar(H,S)$ is an $\ABCI$-graph, then as $A$ is homogeneous, the result follows by Corollary \ref{disconnected ABCI}.  If $\Haar(H,S)$ does not have a twin, then by arguments in the first paragraph, we have that $\Haar(H,S)$ is an ABCI-graph.  If $\Haar(H,S)$ has a twin, then we showed in the previous paragraph that a connected Haar graph of $H$ of a CI-graph with a twin is an $\ABCI$-graph.  Thus in all cases where $\Cay(A,S)$ is disconnected, we have that $\Haar(A,S)$ is an $\ABCI$-graph.   
\end{proof}

\begin{corollary}
Let $A$ be an abelian group of odd order such that every Sylow subgroup of $A$ is elementary abelian.  If $A$ is a CI-group with respect to digraphs and $S\subseteq A$ such that $S = -S$, then $\Haar(A,S)$ is an ABCI-graph.
\end{corollary}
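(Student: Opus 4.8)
The plan is to deduce this statement directly from the preceding corollary, which already establishes that a Haar graph $\Haar(A,S)$ with $S=-S$ is an $\ABCI$-graph whenever $A$ is an abelian group of odd order whose Sylow subgroups are elementary abelian or cyclic and $\Cay(A,S)$ is a CI-graph. Thus the entire task reduces to checking that the hypotheses of that corollary are consequences of the (superficially different) hypotheses given here.

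First I would dispose of the Sylow condition: an elementary abelian group is in particular elementary abelian or cyclic, so the assumption that every Sylow subgroup of $A$ is elementary abelian immediately gives that every Sylow subgroup is elementary abelian or cyclic. Second, I would verify that $\Cay(A,S)$ is a CI-graph of $A$. Since $A$ is a CI-group with respect to digraphs, every Cayley digraph of $A$---and in particular $\Cay(A,S)$---is a CI-digraph of $A$; that is, for every $T\subseteq A$ with $\Cay(A,S)\cong\Cay(A,T)$ there is $\alpha\in\Aut(A)$ with $\alpha(S)=T$. Restricting this quantifier to the connection sets $T$ with $T=-T$ (equivalently, to the Cayley graphs of $A$) shows that $\Cay(A,S)$ is a CI-graph of $A$. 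As $S=-S$ by hypothesis, both hypotheses of the preceding corollary are now in force, and it yields that $\Haar(A,S)$ is an $\ABCI$-graph.

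I do not anticipate any genuine obstacle: all of the substantive work---the case analysis over the relevant families of Theorem \ref{automorphism main result}, the reduction of the disconnected case via Corollary \ref{disconnected cyclic and ea} and Corollary \ref{disconnected ABCI}, the twin case via Lemma \ref{action unfaithful 3}, and the stable case via Lemma \ref{Stable CI} together with Theorem \ref{Daves result}---is carried out in the preceding corollary. The only step meriting explicit mention is the passage from the CI-digraph property to the CI-graph property for the symmetric set $S$, and this is immediate because the digraph hypothesis already quantifies over all connection sets, symmetric ones included. One could note in passing that restricting to elementary abelian, rather than merely elementary abelian or cyclic, Sylow subgroups is what makes the hypothesis that $A$ is a CI-group with respect to digraphs a natural one, since together with odd order it singles out the groups $\prod_p(\Z_p)^{k_p}$; this sharper restriction is not, however, used in the deduction, which runs verbatim under the weaker Sylow hypothesis whenever $A$ happens to be a CI-group.
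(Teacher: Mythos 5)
Your proposal is correct and follows exactly the route the paper intends: the paper states this corollary without proof, as an immediate consequence of the preceding corollary, and your two verifications---that elementary abelian Sylow subgroups are in particular elementary abelian or cyclic, and that $A$ being a CI-group with respect to digraphs makes $\Cay(A,S)$ a CI-digraph and hence (since $S=-S$, by restricting the quantifier to symmetric connection sets) a CI-graph---are precisely the reductions needed to invoke it.
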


\bibliography{References}{}
\bibliographystyle{amsplain}

\end{document}